\DeclarePairedDelimiter{\prn}{(}{)}
\DeclarePairedDelimiter{\set}{\{}{\}}
\DeclarePairedDelimiterX{\Set}[2]{\{}{\}}{\,{#1}\;\delimsize|\;{#2}\,}
\DeclarePairedDelimiter{\inpr}{\langle}{\rangle}
\algnewcommand{\algorithmicinput}{\textbf{Input:}}
\algnewcommand{\Input}{\item[\algorithmicinput]}
\algnewcommand{\algorithmicoutput}{\textbf{Output:}}
\algnewcommand{\Output}{\item[\algorithmicoutput]}
\algnewcommand{\algorithmicbreak}{\textbf{break}}
\algnewcommand{\Break}{\State{\algorithmicbreak}}
\algnewcommand{\nil}{\textbf{nil}}
\setlist{font=\upshape,leftmargin=*}
\setlist[1]{labelindent=\parindent}
\setlist[enumerate,1]{label={(\arabic*)}}
\def\csname ver@etex.sty\endcsname{3000/12/31}
\newtheorem{theorem}{Theorem}[section]
\newtheorem{lemma}[theorem]{Lemma}
\newtheorem{proposition}[theorem]{Proposition}
\theoremstyle{definition}
\crefname{step}{Step}{Steps}
\newcommand{\ones}{\mathbf{1}}
\newcommand{\Rp}{\R_{\ge 0}}
\newcommand{\Rpp}{\R_{> 0}}
\newcommand{\R}{\mathbb{R}}
\newcommand{\given}{\mathrel{|}}
\newcommand{\Ord}{\mathrm{O}}
\newcommand{\moffdiag}{M_\mathrm{offdiag}}
\newcommand{\mdiag}{M_\mathrm{diag}}
\newcommand{\compress}{\star}
\newcommand{\textscup}[1]{\textsc{\textup{#1}}}
\DeclareMathOperator*{\argmax}{arg\,max}
\DeclareMathOperator{\diag}{diag}
\DeclareMathOperator{\Diag}{Diag}
\DeclareMathOperator{\nnz}{nnz}
\DeclareMathOperator{\err}{err}
\newcommand{\tmax}{t_\mathrm{max}}
\title{\texorpdfstring{%
  Fast and Numerically Stable Implementation \\
  of Rate Constant Matrix Contraction Method
}{%
  Fast and Numerically Stable Implementation of Rate Constant Matrix Contraction Method
}}
\author{
  Shinichi Hemmi%
  \texorpdfstring{\thanks{
    Preferred Networks, Inc., Tokyo 100-0004, Japan.
    E-mail: \href{mailto:hemmi.shinichi@gmail.com}{\nolinkurl{hemmi.shinichi@gmail.com}}
  }}{}
  \and Satoru Iwata%
  \texorpdfstring{\thanks{
    Department of Mathematical Informatics, Graduate School of Information Science and Technology, University of Tokyo, Tokyo 113-8656, Japan.
    E-mail: \href{maialto:iwata@mist.i.u-tokyo.ac.jp}{\nolinkurl{iwata@mist.i.u-tokyo.ac.jp}}}\;}{}%
  \footnotemark[2]
  \and Taihei Oki%
  \texorpdfstring{%
    \thanks{Institute for Chemical Reaction Design and Discovery (ICReDD), Hokkaido University, Sapporo, Hokkaido 001-0021, Japan. E-mail: \href{mailto:oki@icredd.hokudai.ac.jp}{\nolinkurl{oki@icredd.hokudai.ac.jp}} (T.\ Oki)}%
  }{}
}
\begin{document}
\maketitle

\begin{abstract}

The rate constant matrix contraction (RCMC) method, proposed by Sumiya et al.~(2015, 2017), enables fast and numerically stable simulations of chemical kinetics on large-scale reaction path networks.
Later, Iwata et al.~(2023) mathematically reformulated the RCMC method as a numerical algorithm to solve master equations whose coefficient matrices, known as rate constant matrices, possess the detailed balance property.

This paper aims to accelerate the RCMC method. The bottleneck in the RCMC method lies in the greedy selection of steady states, which is actually equivalent to the greedy algorithm for maximum a posteriori (MAP) inference in determinantal point processes (DPPs) under cardinality constraints.
Hemmi et al.~(2022) introduced a fast implementation of the greedy DPP MAP inference, called LazyFastGreedy, by combining the greedy algorithm of Chen et al.~(2018) with the lazy greedy algorithm by Minoux~(1978), a practically efficient greedy algorithm that exploits the submodularity of the objective function.
However, for instances arising from chemical kinetics, the straightforward application of LazyFastGreedy suffers from catastrophic cancellations due to the wide range of reaction time scales.

To address this numerical instability, we propose a modification to  LazyFastGreedy that avoids the subtraction of like-sign numbers by leveraging the properties of rate constant matrices and the connection of the DPP MAP inference to Cholesky decomposition.
For faster implementation, we utilize a segment tree, a data structure that manages one-dimensional arrays of elements in a semigroup.
We also analyze the increase in relative errors caused by like-sign subtractions and permit such subtractions when they do not lead to catastrophic cancellations, aiming to further accelerate the process.
Using real instances from chemical reactions, we confirm that the proposed algorithm is both numerically stable and significantly faster than the original RCMC method.

\end{abstract}

\section{Introduction}

\subsection{Background}
Simulation of chemical kinetics is a pivotal tool for predicting chemical reactions on reaction path networks.
A reaction path network consists of $n$ equilibrium states (EQs) and paths connecting two EQs through transient states (TSs).
The state of a chemical reaction at time $t \ge 0$ is represented as the \emph{yield vector} $x(t)$, which is an $n$-dimensional vector whose $v$th $(v \in V \coloneqq \set{1, \dotsc, n})$ component $x_v(t)$ corresponds to the yield (amount) of the $v$th EQ.
The dynamics of the yield vector follow the \emph{master equation}
\begin{align}\label{def:master}
  \dot{x}(t) = Kx(t).
\end{align}

In~\eqref{def:master}, $K = {(K_{uv})}_{u,v \in V}$ is an $n \times n$ \emph{rate constant matrix}, which is determined from the potential energies of EQs and TSs if the system is described by the microcanonical ensemble~\cite{Sumiya2020-ti,Sumiya2015-br}.
The rate constant matrix $K$ satisfies the following conditions~\cite{Iwata2023}:
\begin{enumerate}[label={(RCM\arabic*)}]
  \item $K_{uv} \ge 0$ for $u \ne v$,\label{item:rcm1}
  \item $K_{vv} = -\sum_{u \ne v} K_{uv}$ for 
 $v \in V$,\label{item:rcm2}
  \item there exists a positive vector $\pi = {(\pi_v)}_{v \in V}$ such that $K_{uv} \pi_v = K_{vu} \pi_u$ for $u, v \in V$.\label{item:rcm3}
\end{enumerate}
The $(u, v)$ off-diagonal entry $K_{uv}$ of $K$ represents the reaction rate constant from the $v$th EQ to the $u$th EQ, and is non-negative by~\ref{item:rcm1}.
The second condition~\ref{item:rcm2} comes from the conservation law of the sum of yields, i.e., $\sum_{v \in V} x_v(t)$ is constant for all $t$.
The last condition~\ref{item:rcm3} and the vector $\pi$ are referred to as the \emph{detailed balance condition} and a \emph{stationary} (or \emph{equilibrium}) \emph{distribution}, respectively.
The yield vector $x(t)$ converges to $\pi$ as $t \to \infty$ regardless of the initial yield $x(0)$ if the network is connected~\cite{Anderson1991-ec}.
The conditions~\ref{item:rcm1}--\ref{item:rcm3} can be rephrased to state that $L = -K\Pi$ is a (weighted) \emph{graph Laplacian matrix}, where $\Pi$ is the diagonal matrix with diagonal entries $\pi_1, \dotsc, \pi_n$.
In probability theory, this system of $x(t)$ is known as the \emph{continuous-time Markov chain}, and the master equation~\eqref{def:master} is called the \emph{Kolmogorov forward equation}.

Rate constant matrices are typically ill-conditioned due to the wide range of reaction time scales.
Furthermore, recent advancements in automatic chemical reaction pathway search techniques, such as the anharmonic downward
distortion following (ADDF) method~\cite{Ohno2004-iw} and the single-component artificial force induced reaction (SC-AFIR) method~\cite{Maeda2014-cm}, have resulted in large-scale ($n \gtrsim \num{10000}$), sparse ($\nnz(K) \lesssim \num{30000}$), and extremely ill-conditioned rate constant matrices; the ratio of the largest to smallest non-zero off-diagonal entries of $K$ can exceed $10^{200}$ (see \cref{tbl:dataset}).
Numerically solving such a stiff differential equation has been recognized as a challenging task.
While one can obtain $x(t)$ for every $t$ from the eigendecomposition of $K$ via the explicit formulation of the analytic solution $x(t) = \mathrm{e}^{tK}x(0)$, computing the eigendecomposition of ill-conditioned matrices requires time-consuming multi-precision arithmetic as the double-precision arithmetic suffers from heavy catastrophic cancellations.

To address these issues, Sumiya et al.~\cite{Sumiya2015-br,Sumiya2017-qo} presented the \emph{rate constant matrix contraction} (RCMC) method for approximately solving master equations described by the canonical or microcanonical ensemble.
The RCMC method relies on neither finite difference, multi-precision arithmetic, nor subtraction of like-sign numbers.
The original description~\cite{Sumiya2015-br,Sumiya2017-qo} of the RCMC method was based on chemical intuitions.
Subsequently, \citet{Iwata2023} provided a mathematical reformulation of the RCMC method, showing that it works for the master equation~\eqref{def:master} with any coefficient matrix $K$ satisfying~\ref{item:rcm1}--\ref{item:rcm3} and performing a theoretical error analysis.
Recent work~\citep{Harabuchi2024-mh} applies the reformulated RCMC method to the sensitivity analysis of complicated reaction path networks.

The reformulated procedure of the RCMC method is outlined as follows~\citep[Section~3]{Iwata2023}.
The input of the RCMC method is a rate constant matrix $K$, an initial yield vector $p$, and the end time $\tmax$ of the simulation.
The RCMC method runs in two steps: Steps~1 and~2.
Step~1 constructs a sequence of growing sets $\emptyset = S^{(0)} \subsetneq S^{(1)} \subsetneq \dotsb \subsetneq S^{(k)} \subsetneq V$ of EQs with $0 \le k \le n$ in a greedy fashion.
This procedure is essentially the pivoted Gaussian elimination for Cholesky decomposition on $L = -K\Pi$ with a minor modification to avoid subtraction of like-sign numbers.
Intuitively, every EQ $v \in S^{(j)}$ is regarded as ``steady'' after some time $t^{(j)}$, i.e., $\dot{x}_v(t) \approx 0$ for $t \ge t^{(j)}$.
On the other hand, EQs in $T^{(j)} \coloneqq V \setminus S^{(j)}$ are said to be ``transient''.
The value $k$ is the number of EQs that are regarded as steady at $\tmax$.
In typical use such as $\tmax = \SI{86400}{sec}$ ($=\text{one day}$), $k$ roughly satisfies $0.4n \lesssim k \lesssim 0.7n$.
The idea of bipartitioning EQs by steadiness comes from the classical \emph{quasi-steady state approximation} (QSSA).

Step~2 of the RCMC method computes a vector $q^{(j)}$ and the time $t^{(j)}$ for $j = 1, \dotsc, k$ by explicit matrix calculations.
It is guaranteed that $0 < t^{(1)} < \dotsb < t^{(k)} \le \tmax$ and each $q^{(j)}$ is a non-negative vector with $\sum_{v \in V} q^{(j)}_v = \sum_{v \in V} p_v$.
The value $t^{(j)}$ is a reference time when the EQs in $S^{(j)}$ and $T^{(j)} \coloneqq V \setminus S^{(j)}$ are regarded as steady and transient, respectively, and the vector $q^{(j)}$ serves as an approximation to $x(t^{(j)})$.
This is a ``full'' scheme to obtain a rough sketch of the entire trajectory of the yield vector $x(t)$ through $t \in [0, \tmax]$.
If only the yield vector $x(\tmax)$ at the last moment $\tmax$ is needed, it suffices to compute only the last vector $q^{(k)}$.
We call the former and the latter way the \texttt{full} and \texttt{last} options, respectively.
See \cref{sec:rcmc} for more on the RCMC method.

The advantages of the RCMC method lie in its numerical stability and running time.
Regarding numerical stability, the RCMC method is free from subtractions of like-sign numbers, and thus, catastrophic cancellation never occurs.
In fact, it is reported in~\cite[Section~6]{Iwata2023} that using double-precision floating-point numbers is enough for obtaining a good approximation to $x(t)$.
The paper~\cite{Iwata2023} also observed that the RCMC method with \texttt{full} option runs within seconds for rate constant matrices with $n \approx \num{1700}$.\footnote{%
  Several versions of the RCMC methods are presented in~\citep{Iwata2023}.
  In this paper, we employ the standard Type~A method with the ``diag'' option, in which the RCMC method coincides with the original description in~\citep {Sumiya2015-br,Sumiya2017-qo}.
}

\subsection{Aim of This Paper}
The aim of this paper is to speed up the (reformulated) RCMC method.
This is motivated by the fact that large-scale rate constant matrices with $n \gtrsim 10,000$ are recently obtained by GRRM and AFIR.
For such large-scale matrices, the RCMC method took more than 5 minutes, even with the faster \texttt{last} option (see \cref{sec:experiments}).
Since the RCMC method is repeatedly applied in the kinetics-based navigation in AFIR~\cite{Sumiya2020-ti}, its speeding up is crucial for accelerating the entire procedure of AFIR.

Step~1 of the RCMC method runs in $\Theta(kn^2)$ time.
Step~2 takes $\Theta(k^3)$ and $\Theta(k^2)$ times for \texttt{full} and \texttt{last} options, respectively, assuming that $k = \tilde{\Omega}(\sqrt{n})$ and $K$ is a sparse matrix, i.e., the number of non-zero entries in $K$ is $\tilde{\Theta}(n)$ (see \cref{sec:runtime-rcmc}).
Here, $\tilde{\Omega}$ and $\tilde{\Theta}$ hides logarithmic factors.
This means that Step~1 is the bottleneck of the RCMC method.
We have also confirmed in numerical experiments that Step~1 actually takes a large portion of the running time for small $k$; see \cref{sec:experiments}.

Our starting point for accelerating Step~1 lies in its connection to the greedy algorithm for \emph{maximum a posteriori} (MAP) \emph{inference} for \emph{determinantal point processes} (DPPs) under cardinality constraints.
A DPP is a probability distribution on the power set $2^V$ of a finite set $V = \set{1, \dotsc, n}$ determined from an $n \times n$ positive semi-definite matrix $L$~\citep{Macchi_1975}.
The probability of observing $X \subseteq V$ is proportional to $\det L_{XX}$, where $L_{XX}$ denotes the principal submatrix of $L$ with rows and columns indexed by $X$.
The MAP inference for DPPs under the cardinality constraint, which is to compute $X$ with maximum $\det L_{XX}$ subject to $|X| = k$, has many applications in machine learning, such as recommendation systems~\cite{Chen2018-nk}, document summarization~\cite{Kulesza2012-er}, and diverse molecular selection~\cite{Nakamura2022-nd}.
While exact solving the MAP inference is NP-hard~\citep{Ko_1995}, a greedy algorithm provides a good approximation due to the submodularity of the function $X \mapsto \log \det L_{XX}$; see \cref{sec:dpp-map-inference} for detail.

We then observe in \cref{sec:relation} that Step~1 of the RCMC method is essentially equivalent to a slight generalization of the greedy MAP inference for DPPs.
Therefore, any implementation of the greedy MAP inference for DPPs can be used as that of Step~1.
We refer to the original implementation~\citep{Iwata2023} of Step~1 as \textsc{Greedy}.
\citet{Chen2018-nk} presented an algorithm, called \textsc{FastGreedy} in~\cite{Hemmi2022-zj}, for faster implementation of greedy MAP inference, improving the running time from $\Theta(kn^2)$ to $\Theta(k^2 n)$.
\textsc{FastGreedy} is based on Doolittle’s algorithm~\cite{doolittle1878method} for Cholesky decomposition, which computes entries of the Cholesky factor directly.
Combining \textsc{FastGreedy} with the lazy greedy algorithm~\cite{Minoux_1978} for cardinality-constrained submodular function maximization, \citet{Hemmi2022-zj} proposed a practically faster implementation, called \textsc{LazyFastGreedy}.
Intuitively, \textsc{LazyFastGreedy} reduces the running time by delaying the computation of entries in the Cholesky factor that are not immediately needed.
The running time of \textsc{LazyFastGreedy} depends on how well this lazy heuristic works.
In the best case, \textsc{LazyFastGreedy} runs in $\Theta(k^3)$ time and is not slower than \textsc{FastGreedy} even in the worst case (up to logarithmic terms).
In our numerical experiment, \textsc{FastGreedy} ran faster than \textsc{Greedy}, and \textsc{LazyFastGreedy} outperformed \textsc{FastGreedy} in real instances.
See \cref{sec:fast-greedy} for details on \textsc{FastGreedy} and \textsc{LazyFastGreedy}.

Unfortunately, however, both \textsc{FastGreedy} and \textsc{LazyFastGreedy} suffer from numerical instability.
In fact, our numerical experiment reported in \cref{sec:experiments} indicates that the set $S^{(j)}$ computed by these two algorithms with double-precision floating-point numbers was incorrect at some $j$ because of the accumulated cancellation errors.

\subsection{Our Contributions}
To achieve fast and numerically stable computation, we present a modification of \textsc{LazyFastGreedy}, termed \textsc{StableLazyFastGreedy}.
As stated above, Step~1 of the RCMC method is essentially the Cholesky decomposition of the graph Laplacian matrix $L = -K\Pi$.
The only point where \textsc{LazyFastGreedy} may subtract like-sign numbers is in the computation of diagonal entries of the Cholesky factor $C$ of $L$.
Due to the conditions~\ref{item:rcm1}--\ref{item:rcm3} on $K$, the Cholesky factor $C$ has the property that its off-diagonal entries are non-positive, and every diagonal entry is the minus of the sum of the off-diagonal entries with the same column.
Therefore, we can retrieve each $v$th diagonal entry of $C$ by summing all the non-zero off-diagonal entries in the same column, i.e., $C_{vv} = -\sum_{v \in T'} C_{uv}$ with $T' \coloneqq T^{(j)} \setminus \set{v}$.
As $C_{uv} \le 0$, this way of computing $C_{vv}$ does not involve additions of opposite-sign numbers.
This is, however, incompatible with the approach of \textsc{LazyFastGreedy}, which benefits from delaying the computation of off-diagonal entries of $C$.

Our idea for overcoming this issue is to consider the \emph{compression} $L^{[j,v]}$ of $L$ by $T'$, a matrix obtained by aggregating the rows (columns) of $L$ indexed by $T'$ into a single row (column).
The compression $L^{[j,v]}$ has the properties that (i) the Cholesky factor $C^{[j,v]}$ of $L^{[j,v]}$ is the compression of $C$ by $T'$ and (ii) $L^{[j,v]}$ is again a graph Laplacian matrix.
The first property implies that we can obtain $C_{vv}$ as the negative of an off-diagonal entry $C^{[j,v]}_{\compress v}$, where $\compress$ denotes the index of the compressed row.
Furthermore, the second property enables us to regard the compression $L^{[j,v]}$ as an input matrix instead of $L$ and obtain $C^{[j,v]}_{\compress v}$ in the same way as other off-diagonal entries of the Cholesky factor.
If the lazy heuristics work well, this reduces the computational cost compared to computing $C_{uv}$ for all $u \in T'$.

In implementing \textsc{StableLazyFastGreedy}, the values of compressed entries $L^{[j,v]}_{\compress w} = \sum_{u \in T'} L_{uw}$ for all $w \in S^{(j)} \cup \set{v}$ are required for computing  $C^{[j,v]}_{\compress v}$.
To calculate the compressed entries only with additions of non-negative numbers, we employ a data structure called a \emph{segment tree} (or a \emph{range tree})~\cite{Chazelle1988-zw}.
A segment tree manages a one-dimensional array of $n$ entries of a semigroup and allows $\Ord(\log n)$-time queries for executing the semigroup operation on a consecutive subarray and updating an array element.
Using segment trees regarding the set of non-negative reals with the addition as a semigroup, we can compute the compressed entries for successive $j = 0, 1, 2, \dotsc$ efficiently.

If the lazy heuristics works the best, \textsc{StableLazyFastGreedy} runs in $\Theta(k^3)$ time (ignoring logarithmic factors), which is as asymptotically fast as \textsc{LazyFastGreedy}.
However, if the lazy heuristics works poorly, the worst-case time complexity swells up to $\Theta(k^3 n)$.
To reduce this complexity in practice, we introduce \textsc{RelaxedStableLazyFastGreedy}, which implements additional heuristics called the \emph{relaxing heuristics}.
Relaxing heuristics aims to accelerate the algorithm by allowing the subtraction of like-sign numbers if their ratio is smaller than a prescribed threshold.
To bound errors that may occur by subtractions, for two positive numbers $\hat{a}$ and $\hat{b}$ containing errors from their true values $a$ and $b$, respectively, we give an upper bound on the relative error of $\hat{a} - \hat{b}$ to the true difference $a - b$.
Through our error analysis, we can determine the threshold depending on the error tolerance.
When the relaxing heuristics works well, i.e., much computation has been done with subtractions, \textsc{RelaxedStableLazyFastGreedy} runs in $\Theta(k^2 n)$ time even if the lazy heuristics work the worst.
We summarize the methods, their running times, and numerical stability in \cref{tbl:running-time}.

\begin{table}[t]
    \centering
    \begin{tabular}{cccccc}\toprule
       Method        & Lazy & Relax & Running time & Stability \\\midrule
       \textsc{Greedy}~\cite{Iwata2023} & ---  & --- & $\Theta(kn^2)$ & \checkmark \\
       \textsc{FastGreedy}~\cite{Chen2018-nk} & --- & --- & $\Theta(k^2 n)$ & \\
       \textsc{LazyFastGreedy}~\cite{Hemmi2022-zj} & best  & --- & $\Theta(k^3)$ & \\
                             & worst & --- & $\Theta(k^2 n)$ & \\
       \textsc{StableLazyFastGreedy} & best  & --- & $\Theta(k^3)$ & \checkmark \\
                                   & worst & --- & $\Theta(k^3 n)$ & \checkmark \\
       \textsc{RelaxedStableLazyFastGreedy} & best  & best & $\Theta(k^3)$ & \checkmark \\
                                   & best  & worst & $\Theta(k^3)$ & \checkmark \\
                                   & worst & best & $\Theta(k^2 n)$ & \checkmark \\
                                   & worst & worst & $\Theta(k^3 n)$ & \checkmark \\
    \bottomrule\end{tabular}
    \caption{
        Comparison of the running time and the numerical stability.
        In running time analyses, $k = \Omega\prn[\big]{{\nnz(K)}^{1/3}}$ is assumed, where $\nnz(K) = \Omega(n)$ denotes the number of non-zero entries in $K$.
        Check marks on the stability column mean that the method does not cause catastrophic cancellations by avoiding subtracting reals close to each other.
    }\label{tbl:running-time}
\end{table}

Finally, we numerically compare the methods with double-precision arithmetic for six rate constant matrices obtained from chemical reaction path networks. 
We observe that \textsc{LazyFastGreedy} and \textsc{FastGreedy} fail before $t^{(j)}$ reaches $\tmax = \SI{86400}{sec}$ due to catastrophic cancellations, whereas the other three methods succeed.
For the largest data with $n = \num{12215}$ and $k = \num{8056}$ corresponding to $\tmax = \SI{86400}{sec}$, \textsc{Greedy} and \textsc{RelaxedStableLazyFastGreedy} (with a reasonable error tolerance) took $\num{1041}$ seconds and $154$ seconds, respectively.
That is, \textsc{RelaxedStableLazyFastGreedy} ran about seven times as fast as \textsc{Greedy}.
We also observe that both the lazy and relaxing heuristics work well in practice by measuring the number of arithmetic operations omitted due to the heuristics.

\subsection{Organization}
The remainder of this paper is organized as follows.
\Cref{sec:preliminaries} describes the RCMC method and its connection to the greedy MAP inference for DPPs.
\Cref{sec:fast-greedy} explains \textsc{FastGreedy} and \textsc{LazyFastGreedy}, providing their running time analyses.
We then present \textsc{StableLazyFastGreedy} in \cref{sec:proposed} and \textsc{RelaxedStableLazyFastGreedy} in \cref{sec:rslfg}.
\Cref{sec:experiments} provides results on numerical experiments.
Finally, \cref{sec:conclusion} concludes this paper.

\section{RCMC Method and Greedy MAP Inference for DPPs}\label{sec:preliminaries}
This section introduces the RCMC method in \cref{sec:rcmc} and discusses its running time and numerical stability in \cref{sec:runtime-rcmc}.
Then, \cref{sec:dpp-map-inference} describes the greedy MAP inference for DPPs, and \cref{sec:relation} explains its relationship to the RCMC method.

\paragraph{Notations.}
Let $\R$, $\Rp$, and $\Rpp$ denote the set of reals, non-negative reals, and positive reals, respectively.
For an non-negative integer $n$, we denote $[n] \coloneqq \set{1, \dotsc, n}$ and $[0, n] \coloneqq \set{0, 1, \dotsc, n}$.
Note that $[0] \coloneqq \varnothing$.

For a finite set $U$, let $\R^U$ denote the set of all $|U|$-dimensional real vectors whose components are indexed by $U$.
Similarly, for finite sets $U$ and $V$, we denote by $\R^{U \times V}$ the set of real matrices of size $|U| \times |V|$ with rows and columns indexed by $U$ and $V$, respectively.
We simply denote $\R^n \coloneqq \R^{[n]}$, $\R^{U \times m} \coloneqq \R^{U \times [m]}$, and $\R^{n \times m} \coloneqq \R^{[n] \times [m]}$.
For a matrix $A \in \R^{U \times V}$, let $A_{I, J}$ and $A_{IJ}$ denote the submatrix of $A$ whose rows and columns are indexed by $I \subseteq U$ and $J \subseteq V$, respectively.
We simply denote $A_{\set{i}J}$, $A_{I\set{j}}$, and $A_{\set{i}\set{j}}$ as $A_{iJ}$, $A_{Ij}$, and $A_{ij}$, respectively.
For a vector $a \in \R^U$, we denote by $a_I$ its subvector indexed by $I \subseteq U$.
In particular, if $I = \set{i}$, we denote it by $a_i$.

For two (row or column) vectors $a, b \in \R^U$, we mean their (standard) inner product by $\inpr{a, b} \coloneqq \sum_{i \in U} a_i b_i$.
We denote the 2-norm of $a \in \R^U$ by $\norm{a} \coloneqq \sqrt{\inpr{a, a}}$.
Let $\ones$ denote the all-one vector of an appropriate dimension.
For a square matrix $L \in \R^{V \times V}$, let $\diag(L)$ denote the vector in $\R^V$ obtained by extracting diagonal entries of $L$.
Conversely, for a vector $d \in \R^V$, let $\Diag(d)$ denote the $V \times V$ diagonal matrix obtained by arranging the components of $d$ into the diagonals.

The number of non-zero entries of a matrix $A$ is denoted by $\nnz(A)$.
We say that a matrix $A \in \R^{U \times V}$ is \emph{dense} if $\nnz(A_{IJ}) = \Theta(|I||J|)$ and \emph{sparse} if $\nnz(A_{IJ}) = \tilde{\Theta}(|I| + |J|)$ for any $I \subseteq U$ and $J \subseteq V$, where $\tilde{\Theta}(\cdot)$ hides logarithmic factors.

\subsection{RCMC Method}\label{sec:rcmc}
Throughout this paper, $V$ is a finite set of cardinality $n$.
A \emph{rate constant matrix} over $V$ is an $n \times n$ matrix $K \in \R^{V \times V}$ satisfying \ref{item:rcm1}--\ref{item:rcm3}.
The vector $\pi = \prn{\pi_v}_{v \in V}$ in~\ref{item:rcm3} is called a \emph{stationary distribution}.
If we set $\Pi \coloneqq \Diag(\pi)$, we can express a rate constant matrix as $K = -L\Pi^{-1}$, where $L \in \R^{V \times V}$ is a symmetric matrix with $L_{uv} \le 0$ for $u,v\in V$ where $u \ne v$ and $L_{vv} = -\sum_{u \ne v} L_{uv}$ for $v \in V$.
Such a matrix $L$ is positive semi-definite (PSD) and is called a (\emph{weighted}) \emph{graph Laplacian matrix}.

\begin{algorithm}[tb]
  \caption{RCMC method~\citep{Iwata2023,Sumiya2020-ti}}\label{alg:rcmc}
  \begin{algorithmic}[1]
    \Input{rate constant matrix $K \in \R^{V \times V}$, initial value $p \in \R^V$, $t_{\mathrm{max}} \in \Rpp$, $\mathrm{opt} \in \set{\texttt{full}, \texttt{last}}$} 
    \Output{$t^{(1)}, \dotsc, t^{(k)} \in \Rpp$ and $q^{(1)}, \dotsc, q^{(k)} \in \R^V$ for some $k \in [0, n)$}
    \State{$S^{(0)} \coloneqq \varnothing$, $T^{(0)} \coloneqq V$, $K^{(0)} \coloneqq K$}\label{line:rcmc-step1-begin}
    \For{$j = 1, \dotsc, n$}
        \State{$s \gets s^{(j)} \coloneqq \argmax\Set[\big]{-K^{(j-1)}_{vv}}{v \in T^{(j-1)}}$}\Comment{Ties are consistently broken}\label{line:rcmc-greedy}
        \If{$-K^{(j-1)}_{ss} < 1 / t_{\mathrm{max}}$}\label{line:rcmc-stop}
            \State{$k \coloneqq j-1$ and \textbf{break}}
        \EndIf
        \State{$S^{(j)} \coloneqq S^{(j-1)} \cup \set{s}$, $T^{(j)} \coloneqq T^{(j-1)} \setminus \set{s}$}\label{line:rcmc-add-S}
        \State{$K^{(j)}_{uv} \coloneqq K^{(j-1)}_{uv} - K^{(j-1)}_{us} K^{(j-1)}_{sv} / K^{(j-1)}_{ss}$ for $u, v \in T^{(j)}$} \Comment{Use~\eqref{eq:K-diag-correction} for diagonals}\label{line:rcmc-update-K}
    \EndFor
    \State{Compute $t^{(j)}$ and $q^{(j)}$ by~\eqref{eq:t-q} for $j \in [k]$ if $\mathrm{opt} = \texttt{full}$ and for $j = k$ if $\mathrm{opt} = \texttt{last}$}\label{line:compute-q}
  \end{algorithmic}
\end{algorithm}

Consider the master equation~\eqref{def:master} defined by a rate constant matrix $K$ with an initial condition $x(0) = p \in \R^n$ over a time interval $t \in [0, t_{\mathrm{max}}]$.
The \emph{rate constant matrix contraction} (RCMC) \emph{method}~\cite{Iwata2023,Sumiya2020-ti}, described in \cref{alg:rcmc}, computes an approximate solution to the master equation.
The RCMC method consists of the following two steps:
\begin{enumerate}[{label={\textbf{Step~\arabic*}:},ref={\arabic*}}]
    \item greedy selection of steady-states (Lines~\ref{line:rcmc-step1-begin}--\ref{line:rcmc-update-K}),\label[step]{step:1}
    \item computation of approximate solutions (\cref{line:compute-q}).\label[step]{step:2}
\end{enumerate}

To distinguish Lines~\ref{line:rcmc-step1-begin}--\ref{line:rcmc-update-K} in \cref{alg:rcmc} and other implementations of \cref{step:1} (\cref{alg:cholesky,alg:cholesky-crout,alg:lazy-fast}) that will be presented later, we refer to Lines~\ref{line:rcmc-step1-begin}--\ref{line:rcmc-update-K} as \textsc{Greedy}.
\textsc{Greedy} constructs a growing sequence of subsets $\emptyset = S^{(0)} \subsetneq S^{(1)} \subsetneq \dotsb \subsetneq S^{(k)} \subseteq V$ with $S^{(j)} = \set{s^{(1)}, \dotsc, s^{(j)}}$, where $k$ denotes the number of iterations.
Each element $s = s^{(j)}$ is called a \emph{steady-state} in expectation of $\dot{x}_s(t) \approx 0$ for $t \ge t^{(j)}$, where $t^{(j)}$ is the \emph{reference time} that will be computed in \cref{step:2} later.
Conversely, elements in $T^{(j)} \coloneqq V \setminus S^{(j)}$ are said to be \emph{transient}.
Starting with $S^{(0)} = \varnothing$, $T^{(0)} = V$ and $K^{(0)} \coloneqq K$, in the $j$th iteration, the algorithm takes the element $s^{(j)} \in T^{(j-1)}$ with the largest diagonal entry of $K^{(j-1)}$ in the absolute value at \cref{line:rcmc-greedy}.
Here, we assume that ties are consistently broken by choosing the smallest element with respect to a fixed total order on $V$, including all the subsequent algorithms.
Then, the algorithm moves $s^{(j)}$ from $T^{(j-1)}$ to $S^{(j-1)}$ (\cref{line:rcmc-add-S}) and modifies $K^{(j-1)}$ into $K^{(j)}$ by a rank-one update (\cref{line:rcmc-update-K}).
The matrix $K^{(j)}$ is the Schur complement of $S^{(j)}$ in $K = K^{(0)}$ (the original rate constant matrix), i.e.,
\begin{align}\label{eq:Kj-schur}
    K^{(j)}
    = K_{TT} - K_{TS} K_{SS}^{-1} K_{ST}
    = -(L_{TT} - L_{TS} L_{SS}^{-1} L_{ST}) \Pi_{TT}^{-1}
\end{align}
holds with $S = S^{(j)}$ and $T = T^{(j)}$.
Since the Schur complement $L_{TT} - L_{TS} L_{SS}^{-1} L_{ST}$ of $S$ in $L$ is again a graph Laplacian matrix,  $K^{(j)}$ is also a rate constant matrix, which is formally stated as follows.

\begin{proposition}[{\citep{Iwata2023}}]\label{prop:schur-complement-rcm}
    The matrix $K^{(j)} \in \R^{T^{(j)} \times T^{(j)}}$ satisfies the axioms~\ref{item:rcm1}--\ref{item:rcm3} of rate constant matrices on $T^{(j)}$.
\end{proposition}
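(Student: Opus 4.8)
The plan is to base everything on the Schur-complement formula~\eqref{eq:Kj-schur}, which reduces the claim to a single linear-algebraic fact: the Schur complement $M \coloneqq L_{TT} - L_{TS}L_{SS}^{-1}L_{ST}$ of $S = S^{(j)}$ in the graph Laplacian $L$ is again a graph Laplacian on $T = T^{(j)}$. Granting this, write $K^{(j)} = -M\Pi_{TT}^{-1}$ with $\Pi_{TT} = \Diag(\pi_T)$ and $\pi_T > 0$. Then for $u \ne v$ in $T$ one gets $K^{(j)}_{uv} = -M_{uv}/\pi_v \ge 0$ since $M_{uv} \le 0$, which is~\ref{item:rcm1}; the column sum $\sum_{u \in T} K^{(j)}_{uv} = -\pi_v^{-1}\sum_{u \in T} M_{uv}$ vanishes because $M$ has zero row sums and is symmetric, which is~\ref{item:rcm2}; and $K^{(j)}_{uv}\pi_v = -M_{uv} = -M_{vu} = K^{(j)}_{vu}\pi_u$, so~\ref{item:rcm3} holds with the positive vector $\pi_T$. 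I would add a remark that the diagonal-correction formula~\eqref{eq:K-diag-correction} used in \cref{alg:rcmc} evaluates to the same number as the raw Schur-complement diagonal entry (that is precisely what~\ref{item:rcm2} asserts), so using it does not affect the statement.

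It then remains to check that $M$ is a graph Laplacian, i.e.\ symmetric with nonpositive off-diagonal entries and zero row sums; positive semidefiniteness is then automatic, as recalled in \cref{sec:rcmc}. Symmetry of $M$ follows from that of $L$. For the row sums I would split $L\ones = \zeros$ into blocks over $S$ and $T$: the $S$-block gives $L_{SS}^{-1}L_{ST}\ones_T = -\ones_S$, the $T$-block gives $L_{TT}\ones_T = -L_{TS}\ones_S$, and hence $M\ones_T = -L_{TS}\ones_S + L_{TS}\ones_S = \zeros$. The one place needing an external input is the sign of the off-diagonal entries: since $-L_{TS}$ and $-L_{ST}$ are entrywise nonnegative and $M_{uv} = L_{uv} - \left((-L_{TS})L_{SS}^{-1}(-L_{ST})\right)_{uv}$, it suffices that $L_{SS}^{-1}$ is entrywise nonnegative. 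This is the classical fact that a symmetric positive definite matrix with nonpositive off-diagonal entries (a Stieltjes matrix) is an $M$-matrix and therefore has an entrywise nonnegative inverse; here $L_{SS}$ is positive definite because $L$ is positive semidefinite and $L_{SS}$ is invertible, the latter being implicit in~\eqref{eq:Kj-schur} and guaranteed by the stopping rule $-K^{(j-1)}_{ss} < 1/\tmax$, which forces every pivot actually used to be strictly positive.

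I expect the only real friction to be this $M$-matrix ingredient together with keeping track of why $L_{SS}$ is nonsingular; nothing here is deep, but the hypotheses must be stated carefully. A self-contained alternative that avoids $M$-matrix theory is induction on $j$: the base case $K^{(0)} = K$ holds by hypothesis, and the inductive step works directly from the rank-one update $K^{(j)}_{uv} = K^{(j-1)}_{uv} - K^{(j-1)}_{us}K^{(j-1)}_{sv}/K^{(j-1)}_{ss}$. There,~\ref{item:rcm1} holds because $K^{(j-1)}_{us}, K^{(j-1)}_{sv} \ge 0$ and $K^{(j-1)}_{ss} < 0$ make the correction term a nonnegative addition (so, incidentally, no subtraction of like-sign numbers occurs off the diagonal);~\ref{item:rcm2} holds by summing the column and using $\sum_{u} K^{(j-1)}_{uv} = 0$ and $\sum_{u} K^{(j-1)}_{us} = -K^{(j-1)}_{ss}$; and~\ref{item:rcm3} holds with the inherited stationary distribution, obtained by applying the detailed-balance identities for $K^{(j-1)}$ to the pairs $(s,v)$, $(u,s)$, and $(u,v)$ in turn. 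This route uses only that each pivot is nonzero, again a consequence of the stopping condition. I would present the Schur-complement argument as the main proof and mention the inductive one as a remark.
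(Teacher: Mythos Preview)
Your proposal is correct and matches the paper's own route: the paper does not prove the proposition here but, in the sentence preceding it, reduces the claim to the fact that the Schur complement of $S$ in the graph Laplacian $L$ is again a graph Laplacian, and then cites~\citep{Iwata2023} for the details. Your main argument fills in exactly those details (symmetry, zero row sums via $L\ones=\zeros$, and the Stieltjes/$M$-matrix fact that $L_{SS}^{-1}\ge 0$ entrywise), and the inductive alternative via the rank-one update is equally valid and arguably more self-contained.
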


The iteration of \textsc{Greedy} ends when all the diagonal entries in $-K^{(j)}$ are less than $1/t_{\mathrm{max}}$.

\cref{step:2} computes the reference time $t^{(j)} \in \Rpp$ and a vector $q^{(j)} \in \R^V $ according to the following formulas\footnote{%
  This formula for $t^{(j)}$, called ``diag'' in~\citep{Iwata2023}, is used in the original RCMC method by~\citet{Sumiya2015-br}.
  More accurate, albeit complicated, formulas are presented in~\citep{Iwata2023}.
}:
\begin{align}\label{eq:t-q}
    t^{(j)} \coloneqq -\frac{1}{K^{(j)}_{ss}}, \;
    q^{(j)} = \begin{pmatrix} q^{(j)}_S \\ q^{(j)}_T \end{pmatrix} \coloneqq \begin{pmatrix}
        K_{SS}^{-1}K_{ST}UK_{TS}K_{SS}^{-1} & -K_{SS}^{-1}K_{ST}U \\
        -UK_{TS}K_{SS}^{-1} & U
    \end{pmatrix} \begin{pmatrix}
        p_S \\ p_T
    \end{pmatrix},
\end{align}
where $s = s^{(j)}$, $S = S^{(j)}$, $T = T^{(j)}$, $K = K^{(0)}$, and $U \coloneqq \Diag\prn[\big]{\ones^\top + \ones^\top K_{TS} K_{SS}^{-2}K_{ST}}$.
The reference times $t^{(j)}$ satisfy $0 \le t^{(1)} \le \dotsb \le t^{(k)} \le \tmax$ and captures times when the solution $x(t)$ change significantly.
Every $q^{(j)}$ serves is a rough approximation of $x\prn[\big]{t^{(j)}}$; for the accuracy analysis, see~\citep[Section~4]{Iwata2023}.

An argument ``opt'' controls the set of $j$ for which $(t^{(j)}, q^{(j)})$ is computed in \cref{step:2}.
The algorithm computes the entire sequence $(t^{(0)}, q^{(0)}), \dotsc, (t^{(k)}, q^{(k)})$ if $\mathrm{opt} = \texttt{full}$ and only $(t^{(k)}, q^{(k)})$ if $\mathrm{opt} = \texttt{last}$.
The former is used when it is necessary to capture the entire trajectory $\Set{(t, x(t))}{0 \le t \le \tmax}$, while the latter is used when it is enough only to obtain the last value $x(\tmax)$, reducing the computational cost of \cref{step:2}.

\subsection{Efficiency and Numerical Stability of the RCMC Method}\label{sec:runtime-rcmc}

Here, we analyze the running time of the RCMC method.
For an efficient implementation, we assume that the input matrix $K$ is given as a sparse matrix, but $K^{(j)}$ is held as a dense matrix since we cannot expect $K^{(j)}$ to be sparse for larger $j$ due to fill-ins even if $K$ is sparse.
Then, the time complexity of \cref{alg:rcmc} is analyzed as follows.

\begin{table}[t]
    \centering
    \begin{tabular}{ccc}\toprule
        Sparsity of $K$ & opt & Running time \\\midrule
        Dense    & \texttt{full} & $\Theta(k^2 n)$ \\
        Dense    & \texttt{last} & $\Theta(kn)$    \\
        Sparse   & \texttt{full} & $\Theta(k^3) + \tilde\Theta(kn)$   \\
        Sparse   & \texttt{last} & $\Theta(k^2) + \tilde\Theta(n)$ \\
    \bottomrule\end{tabular}
    \caption{Running time of \cref{step:2}.}\label{tbl:runtime-step2}
\end{table}

\begin{theorem}\label{thm:rcmc-complexity}
    \textscup{Greedy} runs in $\Theta(kn^2)$ time and \cref{step:2} runs in time summarized in \cref{tbl:runtime-step2}.
\end{theorem}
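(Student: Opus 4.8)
The plan is to prove both bounds by directly counting arithmetic operations, handling \textsc{Greedy} (Lines~\ref{line:rcmc-step1-begin}--\ref{line:rcmc-update-K}) and \cref{step:2} (\cref{line:compute-q}) separately, using throughout that, as the surrounding text stipulates, each $K^{(j)}$ is stored as a \emph{dense} matrix because fill-in generically destroys sparsity after a few eliminations. For \textsc{Greedy}, in iteration $j\in[k]$ the set $T^{(j-1)}$ has $n-j+1$ elements, so the scan on \cref{line:rcmc-greedy} costs $\Theta(n-j+1)$, while the rank-one update on \cref{line:rcmc-update-K} touches every entry of the dense $(n-j)\times(n-j)$ block indexed by $T^{(j)}$ and then recovers its $n-j$ diagonal entries by the diagonal-correction formula, at cost $\Theta((n-j)^2)$, which dominates. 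Summing over $j$ and setting $i=n-j$,
\begin{align*}
  \sum_{j=1}^{k}\Theta\bigl((n-j)^2\bigr)
  =\Theta\left(\sum_{i=n-k}^{n-1}i^2\right)
  =\Theta\bigl(n^3-(n-k)^3\bigr),
\end{align*}
and since $n^3-(n-k)^3=k(n^2+n(n-k)+(n-k)^2)$ with $n^2\le n^2+n(n-k)+(n-k)^2\le 3n^2$ (as $0\le n-k\le n$), this equals $\Theta(kn^2)$; the bound is tight because the update loop literally ranges over $T^{(j)}\times T^{(j)}$. This proves the first assertion.

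For \cref{step:2}, the value $t^{(j)}=-1/K^{(j)}_{ss}$ is read from $K^{(j)}$ in $\Ord(1)$, so the work is that of evaluating $q^{(j)}$ via~\eqref{eq:t-q}. First I would rewrite~\eqref{eq:t-q} so as never to materialize the rectangular matrices $K_{SS}^{-1}K_{ST}$ or $K_{TS}K_{SS}^{-1}$: with $S=S^{(j)}$, $T=T^{(j)}$, $j=|S|$, and $K=K^{(0)}$, one computes $a\coloneqq K_{SS}^{-1}p_S$; then $y\coloneqq p_T-K_{TS}a$; then the diagonal $U$, whose $i$-th entry is $1$ plus the $i$-th component of $\ones^\top K_{TS}K_{SS}^{-2}K_{ST}$ and is thus obtained from $\Ord(1)$ solves with $K_{SS}$ and one multiply by each of $K_{TS}$ and $K_{ST}$; then $q^{(j)}_T\coloneqq Uy$; and finally $q^{(j)}_S\coloneqq -K_{SS}^{-1}(K_{ST}(Uy))$, one multiply by $K_{ST}$ and one solve with $K_{SS}$. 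Every solve uses a triangular factorization of $K_{SS}=-L_{SS}\Pi_{SS}^{-1}$, which is available from the elimination carried out in \cref{step:1}: that elimination proceeds in precisely the pivot order $s^{(1)},\dots,s^{(k)}$, so retaining its triangular factors (an extra $\Ord(kn)$ work, well within the $\Theta(kn^2)$ budget of \textsc{Greedy}) yields a factorization of $K_{S^{(k)}S^{(k)}}$ whose leading $j\times j$ block is the factorization of $K_{S^{(j)}S^{(j)}}$. Hence a solve costs $\Theta(j^2)$ when $K$ is dense, and $\Ord(j^2)$ when $K$ is sparse (with $\Theta(j^2)$ attained in the generic case of dense fill); a multiply by $K_{ST}$ or $K_{TS}$ costs $\Theta(j(n-j))$ when $K$ is dense and $\tilde\Theta(n)$ when $K$ is sparse; and outputting $q^{(j)}\in\R^V$ costs $\Theta(n)$. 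So the work for one $j$ is $\Theta(j^2)+\Theta(j(n-j))+\Theta(n)=\Theta(jn)$ in the dense case and $\Theta(j^2)+\tilde\Theta(n)$ in the sparse case.

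Now $\mathrm{opt}=\texttt{last}$ needs only $j=k$, giving $\Theta(kn)$ (dense) and $\Theta(k^2)+\tilde\Theta(n)$ (sparse), whereas $\mathrm{opt}=\texttt{full}$ needs the sum over $j\in[k]$: $\sum_{j=1}^{k}\Theta(jn)=\Theta(k^2n)$ (dense) and $\sum_{j=1}^{k}(\Theta(j^2)+\tilde\Theta(n))=\Theta(k^3)+\tilde\Theta(kn)$ (sparse), where $k<n$ lets the $\Theta(k^3)$ from the solves be absorbed into $\Theta(k^2n)$ in the dense case. These four values are exactly \cref{tbl:runtime-step2}.

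The step I expect to be the main obstacle is the \texttt{full} case: a naive implementation that re-factored $K_{S^{(j)}S^{(j)}}$ from scratch at each $j$ would spend $\Theta(\sum_j j^3)=\Theta(k^4)$ and overshoot the claimed bounds unless $k=\Ord(\sqrt n)$, so the argument must carefully exploit that the elimination of \cref{step:1} already supplies — at no extra asymptotic cost — the triangular factors of the nested matrices $K_{S^{(1)}S^{(1)}}\subseteq\dots\subseteq K_{S^{(k)}S^{(k)}}$, and only then charge \cref{step:2} with the $\Theta(jn)$ (resp.\ $\Theta(j^2)+\tilde\Theta(n)$) evaluation per $j$. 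The rest is routine bookkeeping: the collapses $\sum_{j=1}^{k}j=\Theta(k^2)$ and $\sum_{j=1}^{k}j^2=\Theta(k^3)$, restricting the lower-bound sums to $j\le k/2$ where needed (using $k\le n$), keeping the dense $\Theta(\cdot)$ terms apart from the sparse $\tilde\Theta(\cdot)$ terms, and checking that every block treated as dense — chiefly the Cholesky factor of $L_{SS}$ under greedy pivoting — really is dense in the relevant regime, so that the stated bounds are tight rather than merely upper bounds.
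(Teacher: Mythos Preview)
Your proposal is correct and follows essentially the same approach as the paper: for \textsc{Greedy} you count the $\Theta((n-j)^2)$ rank-one updates, and for \cref{step:2} you exploit that the triangular factorization of $K_{S^{(j)}S^{(j)}}$ is a byproduct of the elimination in \cref{step:1}, yielding a per-$j$ cost of $\Theta(j^2+\nnz(K_{S^{(j)}T^{(j)}}))$, which sums to the entries of \cref{tbl:runtime-step2}. The paper's own proof is a two-sentence sketch that cites~\cite[Section~5.1]{Iwata2023} for the LU-from-Schur-complement observation and states the per-$j$ cost without derivation; your version simply fills in the bookkeeping (the explicit evaluation order for $q^{(j)}$, the $\Ord(1)$-solve computation of $U$, and the identification of the re-factorization pitfall in the \texttt{full} case) that the paper leaves implicit.
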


\begin{proof}
    It is clear that the bottleneck of \textsc{Greedy} is \cref{line:rcmc-update-K}, which takes $\Theta(kn^2)$ time.
    We analyze the running time of \cref{step:2}.
    As described in \cite[Section~5.1]{Iwata2023}, the LU decomposition of $K_{SS}$ can be immediately obtained from the Schur complements of $K$.
    Since we can compute $q^{(j)}$ in $\Theta(j^2 + \nnz(K_{S^{(j)}T^{(j)}}))$ time for every $j \in [k]$, the computational costs of \cref{step:2} listed in \cref{tbl:runtime-step2} are obtained.
\end{proof}

From \cref{thm:rcmc-complexity} and \cref{tbl:runtime-step2}, the total running time of the RCMC method is $\Theta(kn^2)$ regardless of the sparsity of $K$ and the option.

\Citet[Section~5.2]{Iwata2023} gave the following observation on the numerical stability of the RCMC method.
By \cref{prop:schur-complement-rcm}, $K^{(j-1)}_{ss} < 0$ and $K^{(j-1)}_{us}, K^{(j-1)}_{sv} \ge 0$ hold for any $u, v \in T^{(j)}$.
Thus, the fraction $K^{(j-1)}_{us} K^{(j-1)}_{sv} / K^{(j-1)}_{ss}$ is non-positive.
As a result, the subtraction of $K^{(j-1)}_{us} K^{(j-1)}_{sv} / K^{(j-1)}_{ss}$ from $K^{(j-1)}_{uv}$ at \cref{line:rcmc-update-K} for $u \ne v$ is indeed the addition of two nonnegative numbers, ensuring the operation numerically stable.
On the other hand, in the case of $u = v$, both $K^{(j-1)}_{vv}$ and $K^{(j-1)}_{vs} K^{(j-1)}_{sv} / K^{(j-1)}_{ss}$ become non-positive.
Thus, their subtraction may cause a catastrophic cancellation.
For this problem, the following straightforward remedy was proposed in~\citep{Iwata2023}: first fill out the off-diagonal entries of $K^{(j)}$, and then, calculate every $v$th $(v \in T^{(j)})$ diagonal entry $K^{(j)}_{vv}$ as
\begin{equation}\label{eq:K-diag-correction}
  K^{(j)}_{vv} \gets -\sum_{u \in T^{(j)} \setminus \set{v}} K^{(j)}_{uv}.
\end{equation}
Since $K^{(j)}$ satisfies~\ref{item:rcm2},~\eqref{eq:K-diag-correction} computes the same value as \cref{line:rcmc-update-K},  without increasing the time complexity.
In addition, all the summands in~\eqref{eq:K-diag-correction} are nonnegative by~\ref{item:rcm1}.
Hence,~\eqref{eq:K-diag-correction} provides a numerically stable implementation of \cref{line:rcmc-update-K}.
\Cref{line:compute-q} is also free from subtractions of like-sign numbers~\citep[Section~5.2]{Iwata2023}.


\subsection{Greedy MAP Inference for DPPs}\label{sec:dpp-map-inference}
Let $L \in \R^{V \times V}$ be a positive semi-definite (PSD) matrix.
A \emph{determinantal point process} (DPP)~\citep{Macchi_1975} with a kernel matrix $L$ is a probability distribution on $2^V$ such that the probability of obtaining every $X \subseteq V$ is proportional to $\det L_{XX}$.
The \emph{maximum a posteriori} (MAP) \emph{inference} for DPPs (under the cardinality constraint) is the problem of finding $X \subseteq V$ with $|X| = k$ that maximizes $\det L_{XX}$, where $k \in [0, n]$ is a prescribed parameter.
This problem is equivalent to maximizing a function $f_L\colon 2^V \to \R \cup \set{-\infty}$ defined by
\begin{align}
  f_L(X) \coloneqq \log \det L_{XX} \quad (X \subseteq V),
\end{align}
where $\log 0$ is set to $-\infty$, under $|X| \le k$.
The function $f_L$ is known to be \emph{submodular}~\citep{fan1968inequality}.
Here, a function $f\colon 2^V \to \R \cup \set{-\infty}$ is called \emph{submodular} if $f(v \given X) \ge f(v \given Y)$ for all $X, Y \subseteq V$ with $X \subseteq Y$ and $v \in V \setminus Y$, where $f(v \given X) \coloneqq f(X \cup \set{v}) - f(X)$ denotes the \emph{marginal gain} of $v \in V \setminus X$ under $X \subseteq V$.

Maximizing a submodular function $f\colon 2^V \to \R \cup \set{-\infty}$ under the cardinality constraint $|X| \le k$ requires exponentially many queries of function values in general~\cite{nemhauser1978best}, and even the MAP inference for DPPs is NP-hard~\citep{Ko_1995}.
Thus, polynomial-time approximation algorithms are commonly used for this problem.
The standard \emph{greedy method} works as follows: starting from $S^{(0)} \coloneqq \varnothing$, in each $j$th ($j \in [k]$) iteration, take $s^{(j)} \coloneqq \argmax\Set[\big]{f(v \given S^{(j-1)})}{v \in T^{(j-1)}}$ with $T^{(j-1)} \coloneqq V \setminus S^{(j-1)}$ and let $S^{(j)} \coloneqq S^{(j-1)} \cup \set[big]{s^{(j)}}$.
The greedy method evaluates $f$ a total of $\Theta(kn)$ times, and enjoys a $(1 - 1/\mathrm{e})$-approximation guarantee if $f$ is \emph{monotone}, i.e., $f(X) \le f(Y)$ for $X \subseteq Y$~\citep{nemhauser1978analysis}.
Even if not, the greedy method is commonly adopted as a practical heuristic.

\subsection{Relationship between the RCMC Method and the Greedy DPP MAP Inference}\label{sec:relation}
\Citet{Iwata2023} indicated that \textsc{Greedy} can be regarded as a variant of the greedy DPP MAP inference as follows.
Let $K = -L\Pi^{-1} \in \R^{V \times V}$ be a rate constant matrix and $K^{(j-1)}$, $S^{(j-1)}$, and $T^{(j-1)}$ the values computed in \cref{alg:rcmc}.
At \cref{line:rcmc-greedy}, the $v$th $(v \in T^{(j-1)})$ diagonal entry of $K^{(j-1)}$ can be expressed as
\begin{align}\label{eq:K-diag}
  K^{(j-1)}_{vv}
  = \frac{\det K_{S \cup \set{v},S \cup \set{v}}}{\det K_{SS}}
  = -\frac{\det L_{S \cup \set{v}, S \cup \set{v}}}{\pi_v \det L_{SS}}
\end{align}
by~\citep[Appendix~E]{Iwata2023}, where $S = S^{(j-1)}$.
Define functions $f_{\Pi^{-1}}$ and $f_{-K}$ in the same way as $f_L$, i.e.,
\begin{align}
  f_{\Pi^{-1}}(X)
   & \coloneqq \log \det \prn[\big]{\Pi^{-1}}[X]
  = - \sum_{v \in X} \log \pi_v \quad (X \subseteq V),\\
  f_{-K}(X)
   & \coloneqq \log \det (-K)[X]
  = \log \det L_{XX} - \sum_{v \in X} \log \pi_v
  = f_{L}(X) + f_{\Pi^{-1}}(X) \quad (X \subseteq V).\label{def:f_minus_K}
\end{align}
Then, $f_{-K}$ is submodular since $f_L$ is submodular and $f_{\Pi^{-1}}$ is modular (linear).
By~\eqref{eq:K-diag} and~\eqref{def:f_minus_K}, we have the following proposition.

\begin{proposition}[{\citep[Appendix~E]{Iwata2023}}]\label{prop:marginal-schur}
    For $j \in [k]$ and $v \in T^{(j-1)}$,
    \begin{align}
        f_{-K}\prn[\big]{v \given S^{(j-1)}} = \log\prn[\big]{-K^{(j-1)}_{vv}}
    \end{align}
    holds.
\end{proposition}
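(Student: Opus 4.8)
The plan is to read the statement directly off the Schur-complement identity~\eqref{eq:K-diag} and the additive decomposition~\eqref{def:f_minus_K}, with a short telescoping of the modular part. Fix $j \in [k]$ and $v \in T^{(j-1)}$, and abbreviate $S \coloneqq S^{(j-1)}$. First I would expand the marginal gain using~\eqref{def:f_minus_K}:
\begin{align*}
  f_{-K}\prn[\big]{v \given S}
  &= f_{-K}(S \cup \set{v}) - f_{-K}(S) \\
  &= \prn[\big]{\log \det L_{S \cup \set{v}, S \cup \set{v}} - \log \det L_{SS}} + \prn[\big]{f_{\Pi^{-1}}(S \cup \set{v}) - f_{\Pi^{-1}}(S)},
\end{align*}
and observe that the last difference equals $-\log \pi_v$, since $f_{\Pi^{-1}}(X) = -\sum_{w \in X} \log \pi_w$. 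On the other hand,~\eqref{eq:K-diag} rearranges to $-K^{(j-1)}_{vv} = \det L_{S \cup \set{v}, S \cup \set{v}} / \prn[\big]{\pi_v \det L_{SS}}$, so taking logarithms of both sides reproduces exactly the right-hand side of the display above. Comparing the two gives $f_{-K}\prn[\big]{v \given S} = \log\prn[\big]{-K^{(j-1)}_{vv}}$, as required.

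The only point I would be careful about is that all the logarithms are interpreted consistently with the convention $\log 0 = -\infty$. By \cref{prop:schur-complement-rcm}, $K^{(j-1)}$ is a rate constant matrix on $T^{(j-1)}$, so axioms~\ref{item:rcm1}--\ref{item:rcm2} give $-K^{(j-1)}_{vv} = \sum_{u \in T^{(j-1)} \setminus \set{v}} K^{(j-1)}_{uv} \ge 0$; this quantity may vanish for a general $v \in T^{(j-1)}$, in which case $\det L_{S \cup \set{v}, S \cup \set{v}} = 0$ by~\eqref{eq:K-diag} and both sides of the claimed identity are $-\infty$, so the identity still holds. To make sure the right-hand side of the display is not the indeterminate $-\infty-(-\infty)$, I would check $\det L_{SS} > 0$: iterating~\eqref{eq:K-diag} along $s^{(1)}, \dotsc, s^{(j-1)}$ shows $\det (-K)_{SS} = \prod_{i=1}^{j-1} \prn[\big]{-K^{(i-1)}_{s^{(i)}s^{(i)}}}$, and each factor is at least $1/\tmax > 0$ since the algorithm did not terminate at \cref{line:rcmc-stop} during iterations $1, \dotsc, j-1$ (note $j-1 < k$); as $-K = L\Pi^{-1}$ with $\Pi$ a positive diagonal matrix, this forces $\det L_{SS} > 0$.

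There is no genuine obstacle here: the proposition is a bookkeeping consequence of results already established in the excerpt, and the only mildly delicate part, treated above, is tracking the signs among $K$, $-K$, $L$, and $\Pi$ together with the degenerate case where a marginal gain equals $-\infty$. Everything else is the one-line telescoping computation.
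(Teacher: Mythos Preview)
Your proof is correct and follows exactly the route the paper indicates: it simply unpacks the marginal gain via the decomposition~\eqref{def:f_minus_K} and matches it against the Schur-complement identity~\eqref{eq:K-diag}. The additional care you take with the degenerate case $-K^{(j-1)}_{vv}=0$ and with verifying $\det L_{SS}>0$ is a welcome bit of rigor that the paper leaves implicit.
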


\Cref{prop:marginal-schur} implies that \cref{step:1} of the RCMC method is essentially the greedy algorithm for maximizing the submodular function $f_{-K}$.
In particular, if $\pi = \ones$, it coincides with the greedy DPP MAP inference for a kernel matrix $L = -K$, in which only the stopping criterion is changed from $|S| = k$ to $-K^{(j-1)}_{ss} < 1/t_\mathrm{max}$.
\Cref{alg:rcmc} for general $K$ is the variant of the greedy DPP MAP inference with the objective submodular function modified by adding the modular function $f_{\Pi^{-1}}$.

\section{Accelerating the RCMC Method}\label{sec:fast-greedy}
As we described in \cref{sec:preliminaries}, the bottleneck of the RCMC method is \textsc{Greedy} for \cref{step:1}, which is essentially the same as the greedy DPP MAP inference.
This section describes how we can accelerate \cref{step:1} via existing faster implementations of the greedy DPP MAP inference.
In \cref{sec:cholesky-decomposition}, we introduce \textsc{FastGreedy}, which is an adaptation of the algorithm of \citet{Chen2018-nk}, through its connection to the Cholesky the decomposition.
In \cref{sec:lazy-greedy}, we describe a faster algorithm, \textsc{LazyFastGreedy}, presented by \citet{Hemmi2022-zj}.
We consider the time complexity and the numerical stability of \textsc{LazyFastGreedy} in \cref{sec:lazy-greedy-runtime}.

\subsection{FastGreedy via Cholesky Decomposition}\label{sec:cholesky-decomposition}

\begin{figure}[t]
    \begin{minipage}[t]{0.49\columnwidth}
        \begin{algorithm}[H]
              \caption{pivoted Gaussian elimination}\label{alg:cholesky}
              \begin{algorithmic}[1]
                \Input{A PSD matrix $L \in \R^{V \times V}$ and $\varepsilon \in \Rp$}
                \Output{A (partial) Cholesky factor $C$ of $L$}
                \State{$S^{(0)} \coloneqq \varnothing$, $T^{(0)} \coloneqq V$, $L^{(0)} \coloneqq L$}
                \For{$j = 1, \dotsc, n$}
                    \State{$s \gets s^{(j)} \coloneqq \argmax_{v \in T^{(j-1)}} L^{(j-1)}_{vv}$}\label{line:cholesky-argmax}
                    \If{$L^{(j-1)}_{ss} < \varepsilon$}\label{line:cholesky-stop}
                        \State{$k \coloneqq j-1$ and \textbf{break}}
                    \EndIf
                    \State{$S^{(j)} \coloneqq S^{(j-1)} \cup \set{s}$, $T^{(j)} \coloneqq T^{(j-1)} \setminus \set{s}$}
                    \State{$C_{sj} \coloneqq \sqrt{L^{(j-1)}_{ss}}$}
                    \State{$C_{uj} \coloneqq L^{(j-1)}_{us} / C_{sj}$ for $u \in T^{(j)}$}\label{line:C-uj-gaussian}
                    \State{$L^{(j)}_{uv} \coloneqq L^{(j-1)}_{uv} - L^{(j-1)}_{us} L^{(j-1)}_{sv} / L^{(j-1)}_{ss}$ for $u, v \in T^{(j)}$}
                \EndFor
              \end{algorithmic}
        \end{algorithm}
    \end{minipage}\hfill
    \begin{minipage}[t]{0.49\columnwidth}
        \begin{algorithm}[H]
              \caption{pivoted Doolittle's algorithm}\label{alg:cholesky-crout}
              \begin{algorithmic}[1]
                \Input{A PSD matrix $L \in \R^{V \times V}$ and $\varepsilon \in \Rp$}
                \Output{A (partial)  Cholesky factor $C$ of $L$}
                \State{$S^{(0)} \coloneqq \varnothing$, $T^{(0)} \coloneqq V$, $d^{(0)} \coloneqq \diag(L)$}
                \For{$j = 1, \dotsc, n$}
                    \State{$s \gets s^{(j)} \coloneqq \argmax_{v \in T^{(j-1)}} d^{(j-1)}_v$}\label{line:cholesky-crout-pivoting}
                    \If{$d^{(j-1)}_s < \varepsilon$}\label{line:cholesky-crout-stop}
                        \State{$k \coloneqq j-1$ and \textbf{break}}
                    \EndIf
                    \State{$S^{(j)} \coloneqq S^{(j-1)} \cup \set{s}$, $T^{(j)} \coloneqq T^{(j-1)} \setminus \set{s}$}
                    \State{$C_{sj} \coloneqq \sqrt{d^{(j-1)}_s}$}\label{line:cholesky-diag}
                    \State{$C_{uj} \coloneqq (L_{us} - \inpr{C_{uJ}, C_{sJ}}) / C_{sj}$ for $u \in T^{(j)}$, where $J \coloneqq [j-1]$}\label{line:cholesky-offdiag}
                    \State{$d^{(j)}_v \coloneqq d^{(j-1)}_v - {C_{vj}}^2$ for $v \in T^{(j)}$}\label{line:cholesky-d}
                \EndFor
              \end{algorithmic}
        \end{algorithm}
    \end{minipage}
\end{figure}

The (pivoted) \emph{Cholesky decomposition} of a PSD matrix $L \in \R^{V \times V}$ is a factorization of $L$ in the form of $L = CC^\top$, where $C = PG \in \R^{V \times n}$ is the product of a permutation matrix $P \in \R^{V \times n}$ and a lower-triangular matrix $G \in \R^{n \times n}$.
Note that rows and columns of $C$ are indexed by $V$ and $[n]$, respectively.
\Cref{alg:cholesky,alg:cholesky-crout} detail two methods for the Cholesky decomposition with a tolerance parameter $\varepsilon$.
Both algorithms fill entries of $C$ column-wise from left to right, treating unassigned entries of $C$ as $0$.
\Cref{alg:cholesky} is the standard Gaussian elimination with full pivoting.
We call $C$ computed by \cref{alg:cholesky} with $\varepsilon = 0$ the \emph{Cholesky factor} of $L$ with the order of pivots $s^{(1)}, \dotsc, s^{(k)}$, where $k = \rank L$ holds here.
In particular, the corresponding permutation matrix $P$ is given by $P_{s^{(j)}j} = 1$ for $j \in [k]$.
The intermediate matrix $L^{(j)}$ is the Schur complement of $S^{(j)}$ in $L$.
\cref{alg:cholesky-crout} keeps track of only the diagonal entries of $L^{(j)}$ as a vector $d^{(j)}$, without explicitly computing off-diagonals of $L^{(j)}$.
Note that $C_{uJ}$ and $C_{sJ}$ at \cref{line:cholesky-offdiag} are (row) vectors of dimension $|J| = j-1$ as $u$ and $s$ are single elements in $V$; hence their inner product is defined.
Whereas the unpivoted version of \cref{alg:cholesky-crout} originates from \citet{doolittle1878method} and \citet{gauss1809theoria}, the origin of the pivoted version is unclear; for a detailed history of the Cholesky decomposition, see~\citep{GRCAR2011163}.
Properties of \cref{alg:cholesky,alg:cholesky-crout} are summarized below.

\begin{theorem}\label{thm:equivalence-cholesky-crout}
    In \cref{alg:cholesky,alg:cholesky-crout}, the following hold:
    \begin{enumerate}
        \item both algorithms yield the same values for $k$, $s^{(j)}$ $(j \in [k+1])$, and $C_{uj}$ $(j \in [k],\, u \in T^{(j-1)})$,
        \item $d^{(j)}_v = L^{(j)}_{vv}$ holds for $j \in [0, k]$ and $v \in T^{(j)}$.
    \end{enumerate}
\end{theorem}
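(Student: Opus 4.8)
The plan is a simultaneous induction on the iteration index $j$, showing that the two algorithms make identical choices at every step; the engine of the proof is an auxiliary identity expressing the Schur complement $L^{(j)}$ of \cref{alg:cholesky} through the columns of the partial Cholesky factor already computed.

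Concretely, I would first establish, by induction on $j$, that in \cref{alg:cholesky}
\begin{align}\label{eq:schur-partial-factor-plan}
    L^{(j)}_{uv} = L_{uv} - \sum_{i=1}^{j} C_{ui} C_{vi}
\end{align}
for all $j \in [0,k]$ and $u, v \in T^{(j)}$, where $L^{(j)}$ is symmetric for each $j$ (symmetry is preserved by the rank-one update). For $j = 0$ this is trivial. For the step, set $s = s^{(j)}$; from the assignments in \cref{alg:cholesky} we have $C_{sj}^2 = L^{(j-1)}_{ss}$ and $C_{uj} = L^{(j-1)}_{us}/C_{sj}$, $C_{vj} = L^{(j-1)}_{vs}/C_{sj}$, so $C_{uj}C_{vj} = L^{(j-1)}_{us}L^{(j-1)}_{sv}/L^{(j-1)}_{ss}$ using symmetry of $L^{(j-1)}$. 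Plugging this into the rank-one update $L^{(j)}_{uv} = L^{(j-1)}_{uv} - L^{(j-1)}_{us}L^{(j-1)}_{sv}/L^{(j-1)}_{ss}$ and then applying the inductive hypothesis to $L^{(j-1)}_{uv}$ yields~\eqref{eq:schur-partial-factor-plan}. (Here $L$ being PSD, together with the pivot rule, keeps every $L^{(j)}$ PSD with positive pivots, so the square roots are legitimate.)

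With~\eqref{eq:schur-partial-factor-plan} available, I would prove the theorem by induction on $j$ under the hypothesis that after $j$ iterations both algorithms have the same $S^{(j)}$, the same values $C_{ui}$ for $i \in [j]$ and $u \in T^{(i-1)}$, and satisfy $d^{(j)}_v = L^{(j)}_{vv}$ for all $v \in T^{(j)}$. The base case $j = 0$ is $d^{(0)} = \diag(L) = \diag\prn[\big]{L^{(0)}}$. For the step: by $d^{(j-1)} = \diag\prn[\big]{L^{(j-1)}}$ the pivot rules at \cref{line:cholesky-argmax,line:cholesky-crout-pivoting} select the same $s^{(j)}$ (using the fixed tie-breaking), and the stopping tests at \cref{line:cholesky-stop,line:cholesky-crout-stop} agree, so both loops terminate on the same iteration; this yields a common $k$ and the claim on $s^{(j)}$ for $j \in [k+1]$. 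If the loops continue, $S^{(j)}$ agrees and $C_{sj} = \sqrt{L^{(j-1)}_{ss}} = \sqrt{d^{(j-1)}_s}$ agrees; for $u \in T^{(j)}$, identity~\eqref{eq:schur-partial-factor-plan} at level $j-1$ applied to the pair $(u, s)$ gives $L_{us} - \inpr{C_{uJ}, C_{sJ}} = L^{(j-1)}_{us}$ with $J = [j-1]$, so \cref{line:cholesky-offdiag} computes $C_{uj} = L^{(j-1)}_{us}/C_{sj}$, matching \cref{line:C-uj-gaussian}. Finally,~\eqref{eq:schur-partial-factor-plan} with $u = v$ at levels $j-1$ and $j$ gives $L^{(j)}_{vv} = L^{(j-1)}_{vv} - C_{vj}^2$, which is precisely \cref{line:cholesky-d}, so $d^{(j)}_v = L^{(j)}_{vv}$. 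This closes the induction and proves both parts.

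I do not expect a real obstacle here: once~\eqref{eq:schur-partial-factor-plan} is in place the rest is bookkeeping. The only points deserving care are (i) verifying that every entry of $C$ referenced in the inner product $\inpr{C_{uJ}, C_{sJ}}$ has indeed been assigned at an earlier iteration (it has, since $u, s \in T^{(i)}$ for all $i \le j-1$), or equivalently adopting the stated convention that unassigned entries of $C$ are zero, and (ii) confirming that symmetry and positive semidefiniteness propagate through the rank-one update, which is what legitimizes~\eqref{eq:schur-partial-factor-plan} and the square roots.
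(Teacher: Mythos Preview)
Your proposal is correct and follows essentially the same approach as the paper: both establish the identity $L^{(j)}_{uv} = L_{uv} - \inpr{C_{uJ}, C_{vJ}}$ (the paper's \cref{lem:L-j-as-inner-product}) and then run the same simultaneous induction on $j$. The only minor difference is that you prove the auxiliary identity by direct induction on the rank-one update, whereas the paper derives it in one shot from the block factorization $L = CC^\top$ and the Schur-complement formula; your route is slightly more self-contained but otherwise equivalent.
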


Although we believe that \cref{thm:equivalence-cholesky-crout} is well-known in the literature, we provide its proof as we could not find exact references.
For this, we first present the following lemma, which will also be used in the proof of \cref{lem:stable-d_v-fast}.

\begin{lemma}\label{lem:L-j-as-inner-product}
    Let $k$, $C$, $L^{(j)}$, and $T^{(j)}$ $(j \in [0, k])$ be the values computed by \cref{alg:cholesky} with $\varepsilon = 0$.
    Then, for $j \in [0, k]$ and $u,v \in T^{(j)}$, we have
    $L_{uv}^{(j)} = L_{uv} - \inpr{C_{uJ}, C_{vJ}}$ with $J \coloneqq [j]$.
\end{lemma}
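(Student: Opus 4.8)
The plan is to induct on $j$. The base case $j = 0$ is immediate: $J = [0] = \varnothing$, so $\inpr{C_{uJ}, C_{vJ}} = 0$ and $L^{(0)}_{uv} = L_{uv}$ by the initialization in \cref{alg:cholesky}. For the inductive step, suppose the claim holds for some $j-1 \in [0, k-1]$, i.e.\ $L^{(j-1)}_{uv} = L_{uv} - \inpr{C_{u,[j-1]}, C_{v,[j-1]}}$ for all $u, v \in T^{(j-1)}$. Fix $u, v \in T^{(j)} \subseteq T^{(j-1)}$, and write $s = s^{(j)}$, so that $s \in T^{(j-1)}$ as well. The update rule at the last line of \cref{alg:cholesky} gives

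\begin{align*}
  L^{(j)}_{uv}
  = L^{(j-1)}_{uv} - \frac{L^{(j-1)}_{us} L^{(j-1)}_{sv}}{L^{(j-1)}_{ss}}.
\end{align*}

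First I would rewrite the correction term using the formulas for the entries of $C$ in column $j$. By \cref{line:C-uj-gaussian} and the line defining $C_{sj}$, we have $C_{uj} = L^{(j-1)}_{us} / C_{sj}$, $C_{vj} = L^{(j-1)}_{sv} / C_{sj}$ (using symmetry of $L^{(j-1)}$, which holds since $L$ is symmetric and Schur complements of symmetric matrices are symmetric), and $C_{sj}^2 = L^{(j-1)}_{ss}$. Hence

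\begin{align*}
  \frac{L^{(j-1)}_{us} L^{(j-1)}_{sv}}{L^{(j-1)}_{ss}}
  = \frac{(C_{uj} C_{sj})(C_{sj} C_{vj})}{C_{sj}^2}
  = C_{uj} C_{vj}.
\end{align*}

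Substituting this and the inductive hypothesis for $L^{(j-1)}_{uv}$ yields $L^{(j)}_{uv} = L_{uv} - \inpr{C_{u,[j-1]}, C_{v,[j-1]}} - C_{uj}C_{vj} = L_{uv} - \inpr{C_{u,[j]}, C_{v,[j]}}$, which is the claim for $j$. (Here I am also using that $C_{uj'}$ for $j' < j$ is unchanged by the $j$th iteration, since the algorithm fills $C$ column by column and never revisits earlier columns.)

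I do not anticipate a genuine obstacle here; the argument is a routine induction. The one point requiring a small amount of care is the symmetry of the intermediate matrices $L^{(j-1)}$ — needed to identify $L^{(j-1)}_{sv}$ with $L^{(j-1)}_{vs}$ so that the correction term factors as $C_{uj}C_{vj}$ — and the bookkeeping that the already-computed columns of $C$ are not overwritten in later iterations. Both are standard facts about Gaussian elimination on a symmetric matrix, but I would state them explicitly so the induction goes through cleanly.
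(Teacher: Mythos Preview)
Your induction is correct and complete; the base case and inductive step both go through exactly as you describe, and the two bookkeeping points you flag (symmetry of $L^{(j-1)}$ and the fact that earlier columns of $C$ are never overwritten) are the only things to watch.

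The paper, however, takes a different route. Rather than inducting along the update rule, it uses the global identity $L = CC^\top$ together with the fact that $L^{(j)}$ is the Schur complement $L_{TT} - L_{TS}L_{SS}^{-1}L_{ST}$ of $S = S^{(j)}$ in $L$. Writing $L$ in $2\times 2$ block form via $C$ gives $L_{SS} = C_{SJ}C_{SJ}^\top$ and $L_{TS} = C_{TJ}C_{SJ}^\top$, and since $C_{SJ}$ is lower-triangular with positive diagonal (hence invertible), the Schur complement collapses to $L_{TT} - C_{TJ}C_{TJ}^\top$, which is the desired entrywise identity. This argument is a one-shot block computation rather than a step-by-step induction. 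Your approach is more elementary in that it uses only the algorithm's recursion and never appeals to the Schur-complement interpretation or the full factorization $L = CC^\top$; in particular, it only touches the columns of $C$ already computed by iteration $j$, so it sidesteps any subtlety about whether the algorithm with $\varepsilon = 0$ terminates with an exact factorization. The paper's approach, on the other hand, is shorter and more structural once those facts are in hand.
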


\begin{proof}
    Let $T = T^{(j)}$ and $S = V \setminus T$.
    By $L = CC^\top$, we have
    \begin{gather}
        \begin{pmatrix}
            L_{SS} & L_{ST} \\
            L_{TS} & L_{TT}
        \end{pmatrix}
        = \begin{pmatrix}
            C_{SJ} & O \\
            C_{TJ} & *
        \end{pmatrix}
        \begin{pmatrix}
            C_{SJ}^\top & C_{TJ}^\top \\
            O & *
        \end{pmatrix}
        = \begin{pmatrix}
            C_{SJ}C_{SJ}^\top & C_{SJ} C_{TJ}^\top \\
            C_{TJ}C_{SJ}^\top & *
        \end{pmatrix},
    \end{gather}
    where $O$ denotes a zero matrix of appropriate size and $*$ means some matrix.
    Since $L^{(j)}$ is the Schur complement of $S$ in $L$, we have
    \begin{gather}
        L^{(j)}
        = L_{TT} - L_{TS}L_{SS}^{-1}L_{ST}
        = L_{TT} - C_{TJ}C_{SJ}^\top \prn[\big]{C_{SJ}^\top}^{-1} \prn[\big]{C_{SJ}}^{-1} C_{SJ} C_{TJ}^\top
        = L_{TT} - C_{TJ}C_{TJ}^\top
    \end{gather}
    as desired.
\end{proof}

\begin{proof}[{Proof of \cref{thm:equivalence-cholesky-crout}}]
    Let $k$, $C$, and $s^{(j)}$ $(j \in [k+1])$ denote the values computed by \cref{alg:cholesky}, and $\tilde k$, $\tilde C$, and $\tilde s^{(j)}$ $(j \in [\tilde k+1])$ those by \cref{alg:cholesky-crout}.
    Let $S^{(j)} \coloneqq \set{s^{(1)}, \dotsc, s^{(j)}}$ and $T^{(j)} \coloneqq V \setminus S^{(j)}$ for $j \in [0, k]$, $\tilde S^{(j)} \coloneqq \set{\tilde s^{(1)}, \dotsc, \tilde s^{(j)}}$ and $\tilde T^{(j)} \coloneqq V \setminus \tilde S^{(j)}$ for $j \in [0, \tilde k]$.
    By induction on $j \in [0, \min\set{k, \tilde k}]$, we show $S^{(j)} = \tilde S^{(j)}$ (hence $T^{(j)} = \tilde T^{(j)}$), $C_{ul} = \tilde C_{ul}$ for $l \in [j]$ and $u \in T^{(l-1)} = \tilde T^{(l-1)}$, and $d^{(j)}_v = L^{(j)}_{vv}$ for $v \in T^{(j)} = \tilde T^{(j)}$.
    The claims are clear when $j = 0$.
    For $j \ge 1$, suppose that the claims hold for $j-1$ and consider the case of $j$.
    By $d_v^{(j-1)} = L_{vv}^{(j-1)}$ for $v \in T^{(j-1)}$, we have $s \coloneqq s^{(j)} = \tilde s^{(j)}$ and hence the conditions $L_{ss}^{(j-1)} < \varepsilon$ and $d_s^{(j)} < \varepsilon$ are equivalent, meaning $k = \tilde k$ if the induction works correctly.
    If these conditions are met, we also have $S^{(j)} = \tilde S^{(j)}$ and $C_{sj} = \tilde C_{sj}$.
    For $u \in T^{(j)}$, by \cref{lem:L-j-as-inner-product} and the inductive assumptions, we have
    \begin{align}
        C_{uj}
        = \frac{L_{us}^{(j-1)}}{C_{sj}}
        = \frac{L_{us} - \inpr{C_{uJ}, C_{sJ}}}{C_{sj}}
        = \frac{L_{us} - \inpr{\tilde C_{uJ}, \tilde C_{sJ}}}{\tilde C_{sj}}
        = \tilde C_{uj},
    \end{align}
    where $J \coloneqq [j-1]$.
    Furthermore, for $v \in T^{(j)}$, we have
    \begin{align}
        L^{(j)}_{vv}
        = L^{(j-1)}_{vv} - \frac{{L^{(j-1)}_{vs}}^2}{L^{(j-1)}_{ss}}
        = L^{(j-1)}_{vv} - C_{vj}
        = d_v^{(j-1)} - \tilde C_{vj}
        = d_v^{(j)},
    \end{align}
    completing the proof.
\end{proof}

The matrix $L^{(j)}$ in \cref{alg:cholesky} is the Schur complement of $S^{(j)}$ in $L = L^{(0)}$.
Hence, if $L$ is a graph Laplacian matrix, the following holds similarly to \cref{prop:schur-complement-rcm}.

\begin{proposition}\label{prop:L-j-is-laplacian}
    If $L$ is a graph Laplacian matrix, so is $L^{(j)}$ for any $j \in [0, k]$.
\end{proposition}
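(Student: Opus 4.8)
The plan is to prove the proposition by induction on $j$, reducing the claim to a single elimination step. The base case $j = 0$ is immediate, since $L^{(0)} = L$ is a graph Laplacian matrix by hypothesis. For the inductive step, assume $L^{(j-1)}$ is a graph Laplacian matrix on $T^{(j-1)}$; it then suffices to show that the rank-one update $L^{(j)}_{uv} = L^{(j-1)}_{uv} - L^{(j-1)}_{us}L^{(j-1)}_{sv}/L^{(j-1)}_{ss}$ ($u, v \in T^{(j)}$, $s = s^{(j)}$) produces a graph Laplacian matrix on $T^{(j)} = T^{(j-1)} \setminus \set{s}$, since composing these single steps recovers $L^{(j)}$ from $L^{(0)} = L$. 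A preliminary observation is that the pivot $L^{(j-1)}_{ss}$ is strictly positive: it is $\ge \varepsilon \ge 0$ because the algorithm did not halt at \cref{line:cholesky-stop}, and it is nonzero because $L^{(j-1)}$, being a graph Laplacian, is PSD and is itself nonzero for $j \in [k]$; as $s$ attains the maximum diagonal entry (\cref{line:cholesky-argmax}), this forces $L^{(j-1)}_{ss} > 0$.

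I would then verify the three defining properties of a graph Laplacian for $L^{(j)}$. Symmetry is inherited directly from $L^{(j-1)}$. For the off-diagonal sign condition, take $u \ne v$ in $T^{(j)}$; then $u, v \ne s$, so $L^{(j-1)}_{uv} \le 0$, $L^{(j-1)}_{us} \le 0$, and $L^{(j-1)}_{sv} \le 0$, while $L^{(j-1)}_{ss} > 0$, whence $L^{(j-1)}_{us}L^{(j-1)}_{sv}/L^{(j-1)}_{ss} \ge 0$ and therefore $L^{(j)}_{uv} \le L^{(j-1)}_{uv} \le 0$. For the vanishing-row-sum condition (equivalently vanishing-column-sum, by symmetry), fix $v \in T^{(j)}$ and use that $L^{(j-1)}$ has zero column sums over $T^{(j-1)}$: splitting off the $s$-term gives $\sum_{u \in T^{(j)}} L^{(j-1)}_{uv} = -L^{(j-1)}_{sv}$ and $\sum_{u \in T^{(j)}} L^{(j-1)}_{us} = -L^{(j-1)}_{ss}$; substituting into the update formula yields $\sum_{u \in T^{(j)}} L^{(j)}_{uv} = -L^{(j-1)}_{sv} - (L^{(j-1)}_{sv}/L^{(j-1)}_{ss})(-L^{(j-1)}_{ss}) = 0$, as needed. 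This completes the induction.

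I expect the only genuine subtlety to be the positivity of the pivot $L^{(j-1)}_{ss}$, which is needed both for the one-step Schur complement to be well defined and for the sign bookkeeping above; it rests on the implication ``graph Laplacian $\Rightarrow$ PSD'' together with nonvanishing of $L^{(j-1)}$ for $j \le k$. An alternative that sidesteps the induction is to argue directly from the block formula $L^{(j)} = L_{TT} - L_{TS}L_{SS}^{-1}L_{ST}$ with $S = S^{(j)}$, $T = T^{(j)}$: symmetry and $L^{(j)}\ones = \zeros$ follow on substituting $L\ones = \zeros$ (which gives $L_{SS}^{-1}L_{ST}\ones = -\ones$), while the off-diagonal sign requires the classical fact that the inverse of the symmetric M-matrix $L_{SS}$ is entrywise nonnegative, so that the $(u,v)$ entry $L_{uS}L_{SS}^{-1}L_{Sv}$ of $L_{TS}L_{SS}^{-1}L_{ST}$ is a product of a nonpositive row vector, a nonnegative matrix, and a nonpositive column vector, hence $\ge 0$. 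I would present the one-step inductive version as the main proof, both because it is self-contained and because it parallels the proof of \cref{prop:schur-complement-rcm}.
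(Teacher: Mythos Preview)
Your inductive proof is correct and self-contained. The paper itself does not give an explicit argument for \cref{prop:L-j-is-laplacian}: it simply notes that $L^{(j)}$ is the Schur complement of $S^{(j)}$ in $L$ and asserts that the result ``holds similarly to \cref{prop:schur-complement-rcm}'', which in turn is quoted from~\citep{Iwata2023} without proof. Your one-step induction (symmetry inherited; off-diagonal signs from $L^{(j-1)}_{us}L^{(j-1)}_{sv}/L^{(j-1)}_{ss}\ge 0$; zero column sums from splitting off the $s$-row) is exactly the elementary argument one would expect behind that citation, so there is no substantive divergence---you have simply written out what the paper leaves implicit.

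One minor remark on the pivot-positivity discussion: your PSD argument is fine, but in the context of \cref{alg:cholesky} it is even simpler to note that for every $j\in[k]$ the algorithm did not halt at \cref{line:cholesky-stop}, hence $L^{(j-1)}_{ss}\ge\varepsilon$; when $\varepsilon>0$ this already gives strict positivity, and when $\varepsilon=0$ the paper takes $k=\rank L$, so $L^{(j-1)}$ has positive rank for $j\le k$ and your PSD/maximum-diagonal reasoning applies. Either way the step is sound.
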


\Cref{prop:L-j-is-laplacian} immediately implies the following properties on the Cholesky factor of a graph Laplacian matrix.

\begin{proposition}\label{prop:cholesky-factor-laplacian}
    Let $L$ be a graph Laplacian matrix and $C$ the Cholesky factor of $L$ computed by \cref{alg:cholesky,alg:cholesky-crout}.
    Then, $C_{uj} \le 0$ and $C_{s^{(j)}j} \ge 0$ hold for $j \in [k]$ and $u \in T^{(j)}$.
\end{proposition}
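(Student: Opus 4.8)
The plan is to reduce everything to the structural fact recorded in \cref{prop:L-j-is-laplacian}: at every stage of the elimination, the Schur complement $L^{(j)}$ is again a graph Laplacian matrix. By \cref{thm:equivalence-cholesky-crout} the two algorithms produce exactly the same entries $C_{uj}$ (and the same $k$ and pivots $s^{(j)}$), so it suffices to carry out the argument inside \cref{alg:cholesky} and transfer the conclusion.

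Fix $j \in [k]$ and write $s \coloneqq s^{(j)}$. The first thing to check is the pivot $L^{(j-1)}_{ss}$. Since $L^{(j-1)}$ is a graph Laplacian by \cref{prop:L-j-is-laplacian}, each of its diagonal entries equals minus the sum of the off-diagonal entries in its row, all of which are $\le 0$; hence every diagonal entry of $L^{(j-1)}$ is $\ge 0$, and in particular $L^{(j-1)}_{ss} \ge 0$. For the Cholesky factor ($\varepsilon = 0$, $k = \rank L$) one gets strict positivity: $L^{(j-1)}$ is the Schur complement of the invertible block $L_{S^{(j-1)}S^{(j-1)}}$, so $\rank L^{(j-1)} = k - (j-1) \ge 1$, and a nonzero PSD matrix has a strictly positive diagonal entry, whence $L^{(j-1)}_{ss} = \max_{v \in T^{(j-1)}} L^{(j-1)}_{vv} > 0$. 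In either case $C_{s^{(j)}j} = \sqrt{L^{(j-1)}_{ss}} \ge 0$, which is the second claimed inequality; the strict positivity also guarantees that the division in \cref{line:C-uj-gaussian} is well posed.

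For the off-diagonal entries, take $u \in T^{(j)} = T^{(j-1)} \setminus \set{s}$, so $u \ne s$. Applying \cref{prop:L-j-is-laplacian} to $L^{(j-1)}$ again, the off-diagonal entry satisfies $L^{(j-1)}_{us} \le 0$. Since $C_{sj} = \sqrt{L^{(j-1)}_{ss}} > 0$, the quotient $C_{uj} = L^{(j-1)}_{us} / C_{sj}$ is nonpositive, giving the first inequality. (For \cref{alg:cholesky-crout} one may either invoke \cref{thm:equivalence-cholesky-crout} directly, or argue in place: \cref{lem:L-j-as-inner-product} shows that the numerator $L_{us} - \inpr{C_{uJ}, C_{sJ}}$ in \cref{line:cholesky-offdiag} equals $L^{(j-1)}_{us} \le 0$, and $C_{sj} = \sqrt{d^{(j-1)}_s} = \sqrt{L^{(j-1)}_{ss}} > 0$ by \cref{thm:equivalence-cholesky-crout}(2).)

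I do not anticipate a real obstacle: the entire content is carried by \cref{prop:L-j-is-laplacian}, and the only point that needs a careful word is the pivot itself — namely that $L^{(j-1)}_{ss}$ is nonnegative because it is a diagonal entry of a Laplacian, and strictly positive for $j \le k = \rank L$ so that $C_{sj}$ is a positive real and the formula defining $C_{uj}$ makes sense and is sign-preserving.
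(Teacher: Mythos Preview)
Your proposal is correct and follows exactly the paper's approach: both reduce to \cref{prop:L-j-is-laplacian} together with the formula $C_{uj} = L^{(j-1)}_{us}/\sqrt{L^{(j-1)}_{ss}}$ from \cref{line:C-uj-gaussian} in \cref{alg:cholesky}. Your write-up is in fact more careful than the paper's one-line proof, since you explicitly address the strict positivity of the pivot $L^{(j-1)}_{ss}$ needed for the division to be well defined.
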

\begin{proof}
    The claims hold from \cref{prop:L-j-is-laplacian} and the equality $C_{uj} = L^{(j)}_{us} / \sqrt{L^{(j)}_{ss}}$ with $s = s^{(j)}$ for $u \in T^{(j)}$ from \cref{line:C-uj-gaussian} in \cref{alg:cholesky}.
\end{proof}

We get back to the RCMC method.
In case of $\pi = \ones$, \textsc{Greedy} for \cref{step:1} is the same as \cref{alg:cholesky} applied to $L = -K$ and $\varepsilon = 1/t_\mathrm{max}$ except \cref{line:cholesky-diag,line:cholesky-offdiag}.
Since \cref{step:1} with $\pi = \ones$ and the greedy DPP MAP inference are equivalent and \cref{alg:cholesky,alg:cholesky-crout} yield the same output by \cref{thm:equivalence-cholesky-crout}, \cref{alg:cholesky-crout} can be used as an alternative implementation of the greedy DPP MAP inference, which is nothing but the \emph{fast greedy} presented by \citet{Chen2018-nk}.
We can also employ \cref{alg:cholesky-crout} as an implementation for \cref{step:1} of the RCMC method with a general rate constant matrix $K = -L\Pi^{-1}$ by modifying \cref{alg:cholesky-crout} as follows:
\begin{itemize}
    \item replace \cref{line:cholesky-crout-pivoting} with $s \gets s^{(j)} \coloneqq \argmax\set[\big]{d^{(j-1)}_v / \pi_v \given v \in T^{(j-1)}}$, and
    \item change the stopping criterion at \cref{line:cholesky-crout-stop} to $d^{(j-1)}_s / \pi_s < 1/t_\mathrm{max}$.
\end{itemize}
This algorithm, which we refer to as \textsc{FastGreedy}, yields the same result as \textsc{Greedy} because of the equivalence of \textsc{Greedy} and \cref{alg:cholesky}, and that of \cref{alg:cholesky,alg:cholesky-crout} due to \cref{thm:equivalence-cholesky-crout}.
This improves the running time of \cref{step:1} from $\Theta(kn^2)$ to $\Theta(k^2n)$, summarized as follows.

\begin{theorem}\label{thm:complexity-cholesky-crout}
    \textscup{FastGreedy} computes the same values for $k$ and $s^{(j)}$ $(j \in [k+1])$ as \textscup{Greedy} in $\Theta(k^2n)$ time.
\end{theorem}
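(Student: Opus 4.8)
The plan is to establish \cref{thm:complexity-cholesky-crout} in two parts: correctness (that \textsc{FastGreedy} outputs the same $k$ and pivots $s^{(j)}$ as \textsc{Greedy}) and the running time bound $\Theta(k^2 n)$. The correctness part is essentially a bookkeeping argument chaining together equivalences already established in the excerpt. First I would recall that \textsc{Greedy} (applied to a rate constant matrix $K = -L\Pi^{-1}$) selects $s^{(j)} = \argmax\set{-K^{(j-1)}_{vv} : v \in T^{(j-1)}}$ and halts when this quantity drops below $1/t_\mathrm{max}$. By \eqref{eq:K-diag} we have $-K^{(j-1)}_{vv} = L^{(j-1)}_{vv}/\pi_v$ where $L^{(j-1)}$ is the Schur complement of $S^{(j-1)}$ in $L$ (equivalently, the intermediate matrix in \cref{alg:cholesky} run on $L$). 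Hence \textsc{Greedy}'s pivoting rule is exactly the rule $s^{(j)} = \argmax\set{L^{(j-1)}_{vv}/\pi_v}$ with stopping criterion $L^{(j-1)}_{ss}/\pi_s < 1/t_\mathrm{max}$. By \cref{thm:equivalence-cholesky-crout}(2), $d^{(j-1)}_v = L^{(j-1)}_{vv}$ in \cref{alg:cholesky-crout}, so the modified \cref{line:cholesky-crout-pivoting} and \cref{line:cholesky-crout-stop} of \textsc{FastGreedy} — namely $\argmax\set{d^{(j-1)}_v/\pi_v}$ and $d^{(j-1)}_s/\pi_s < 1/t_\mathrm{max}$ — compute precisely the same sequence of pivots and the same $k$. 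One should phrase this as an induction over $j$, assuming $S^{(j-1)}$ and the relevant $d$-values agree and invoking \cref{thm:equivalence-cholesky-crout} to propagate the diagonal-value equality; this is straightforward since \cref{thm:equivalence-cholesky-crout} was proved for an arbitrary PSD input and the $\pi_v$ weights only rescale the comparison.

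For the running time, I would analyze \cref{alg:cholesky-crout} (with the two trivial modifications, which cost only $\Ord(n)$ extra per iteration for dividing by $\pi_v$ and do not change the asymptotics). There are at most $k+1$ iterations. In iteration $j$: the pivot selection at the modified \cref{line:cholesky-crout-pivoting} scans $|T^{(j-1)}| = \Ord(n)$ entries, hence $\Ord(n)$ time; computing $C_{sj}$ is $\Ord(1)$; the diagonal update at \cref{line:cholesky-d} touches $\Ord(n)$ entries, each in $\Ord(1)$ time, so $\Ord(n)$; and the off-diagonal update at \cref{line:cholesky-offdiag} computes, for each of the $\Ord(n)$ elements $u \in T^{(j)}$, the inner product $\inpr{C_{uJ}, C_{sJ}}$ of two vectors of length $|J| = j-1 = \Ord(k)$, giving $\Ord(kn)$ time for this line. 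Summing over $j \in [k]$ yields $\Ord(k^2 n)$. For the matching lower bound $\Omega(k^2 n)$, observe that the dominant \cref{line:cholesky-offdiag} performs exactly $|T^{(j)}| \cdot (j-1)$ multiply-adds in iteration $j$, and since $|T^{(j)}| = n - j \ge n - k$, the total is at least $\sum_{j=1}^{k}(n-k)(j-1) = \Theta(k^2(n-k))$; under the standing assumption that $k$ is bounded away from $n$ (or more precisely when $k \le (1-\delta)n$, consistent with the $0.4n \lesssim k \lesssim 0.7n$ regime noted earlier), this is $\Theta(k^2 n)$. I would state the bound as $\Theta(k^2 n)$ under this mild assumption, matching the paper's conventions.

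The main subtlety — and the only place requiring care — is not in either computation per se but in making sure the chain of equivalences is invoked at the right level of generality. Specifically, \cref{thm:equivalence-cholesky-crout} is stated for \cref{alg:cholesky} and \cref{alg:cholesky-crout} with their native pivoting rule ($\argmax$ of the raw diagonal), whereas \textsc{FastGreedy} uses the $\pi$-weighted rule. The clean way around this is to note that \textsc{FastGreedy}'s weighted rule on \cref{alg:cholesky-crout} corresponds, via \eqref{eq:K-diag}, to running the analogue of \cref{alg:cholesky} on $L$ but with pivot $\argmax L^{(j-1)}_{vv}/\pi_v$ — which is exactly the rule implicit in \textsc{Greedy} by \cref{prop:marginal-schur}. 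Thus one does not re-derive \cref{thm:equivalence-cholesky-crout} with weights; one simply observes that the argument of \cref{thm:equivalence-cholesky-crout} goes through verbatim when both algorithms' $\argmax$ lines are replaced by the same weighted $\argmax$, because the inductive step only uses that the two algorithms pick the same pivot and that $d^{(j)}_v = L^{(j)}_{vv}$, both of which are unaffected by rescaling the selection criterion. I would make this remark explicitly so the reader sees why no new proof is needed, and then the theorem follows by concatenating: \textsc{Greedy} $\equiv$ weighted \cref{alg:cholesky} on $L$ (by \cref{prop:marginal-schur}/\eqref{eq:K-diag}) $\equiv$ weighted \cref{alg:cholesky-crout} $=$ \textsc{FastGreedy} (by the weighted version of \cref{thm:equivalence-cholesky-crout}).
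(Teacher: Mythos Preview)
Your proposal is correct and follows the same approach as the paper, which likewise defers correctness to the preceding chain of equivalences (\textsc{Greedy} $\leftrightarrow$ weighted \cref{alg:cholesky} on $L$ $\leftrightarrow$ weighted \cref{alg:cholesky-crout} $=$ \textsc{FastGreedy}) and bounds the running time by noting that \cref{line:cholesky-offdiag} costs $\Theta(j)$ per $u \in T^{(j)}$. One unnecessary hedge: your lower bound uses $|T^{(j)}| \ge n-k$ to get $\Omega(k^2(n-k))$ and then imposes $k \le (1-\delta)n$, but the exact sum is $\sum_{j=1}^k (n-j)(j-1) = \tfrac{k(k-1)(3n-2k-2)}{6}$, and since $3n-2k-2 \ge n-2$ for all $k \le n$, this is $\Theta(k^2 n)$ unconditionally.
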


\begin{proof}
    The equivalence of the algorithms is explained above.
    As for the running time, we take $\Theta(j)$ time to compute $C_{uj}$ for each $j \in [k]$ and $u \in T^{(j)}$ at \cref{line:cholesky-offdiag} in \cref{alg:cholesky-crout}.
    Hence, \cref{line:cholesky-offdiag} takes $\Theta(k^2n)$ time in total.
\end{proof}

Note that \textsc{FastGreedy} does not impose extra computational cost in \cref{step:2} as the LU decomposition of $K_{SS}$ with $S = S^{(j)}$ is obtained from the Cholesky factor $C_{SJ}$ ($J \coloneqq [j]$) of $L_{SS}$ as $K_{SS} = -C_{SJ} \cdot C_{SJ}^\top \Pi_{SS}^{-1}$.
Since \cref{step:2} can be done in $\Ord(k^2n)$ time by \cref{tbl:runtime-step2}, the total running time of the RCMC method with \textsc{FastGreedy} is improved to $\Theta(k^2 n)$ regardless of the sparsity of $K$ and the option of \texttt{full} or \texttt{last}.

\subsection{LazyGreedy and LazyFastGreedy}\label{sec:lazy-greedy}

The \emph{lazy greedy}~\citep{Minoux_1978} is a practically faster implementation of the greedy method for maximizing a general submodular function $f\vcentcolon 2^V \to \R \cup \set{-\infty}$ under a cardinality constraint.
In addition to the current solution $S^{(j-1)} \subseteq V$, the lazy greedy maintains an upper bound $\rho_v$ on the marginal gain $f\prn[\big]{v \given S^{(j-1)}}$ for each $v \in T^{(j-1)} \coloneqq V \setminus S^{(j-1)}$.
The value of $\rho_v$ represents an ``old'' marginal gain of $v \in T^{(j-1)}$ in a sense that $\rho_v = f\prn[\big]{v \given S^{(b_v)}}$ holds for some $b_v \le j-1$, and the submodularity of $f$ guarantees $f\prn[\big]{v \given S^{(j-1)}} \le \rho_v$.
At each $j$th iteration, the lazy greedy picks $u \in T^{(j-1)}$ with the largest $\rho$ value and updates $\rho_u$ to the current marginal gain $f\prn[\big]{u \given S^{(j-1)}}$.
If the updated $\rho_u$ is still no less than $\rho_v$ for any $v \in T^{(j-1)}$, then we can conclude that $u$ has the largest marginal gain under $S^{(j-1)}$ without updating the others, hence we add $u$ to $S^{(j-1)}$ to construct $S^{(j)}$.
Otherwise, this procedure is repeated until an element with the largest marginal gain is found.
The upper bounds ${(\rho_v)}_{v \in T^{(j-1)}}$ are efficiently managed by using a priority queue implemented by a binary heap.
If the lazy update works the best, the method evaluates the objective function only $\Theta(n)$ times.
While the worst-case time complexity of the lazy greedy remains unchanged up to slight additional costs of heap operations, it empirically outperforms the na\"ive greedy method in many real instances~\citep{Hemmi2022-zj}.

\begin{algorithm}[tb]
  \caption{\textsc{LazyFastGreedy}}\label{alg:lazy-fast}
  \begin{algorithmic}[1]
    \Input{rate constant matrix $K = -L\Pi^{-1} \in \R^{V \times V}$, initial vector $p \in \R^V$, $t_{\mathrm{max}} \in \Rpp$} 
    \Output{$t^{(1)}, \dotsc, t^{(k)} \in \Rpp$ and $q^{(1)}, \dotsc, q^{(k)} \in \R^n$}
    \State{$j \gets 1$, $S^{(0)} \coloneqq \varnothing$, $T^{(0)} \coloneqq V$, $d^{(0)} \coloneqq \diag(L)$, $b_v \gets 0$, $\rho_v \gets d^{(0)}_{v} / \pi_v$ $(v \in V)$}\label{line:lazy-fast-begin}
    \While{true}\label{line:lazy-fast-while}
        \State{$u \gets \argmax\Set[\big]{\rho_v}{v \in T^{(j-1)}}$}\label{line:lazy-fast-heap}
        \For{$l = b_u+1, \dotsc, j-1$}\label{line:lazy-fast-inner-loop}
            \State{$C_{ul} \coloneqq (L_{us^{(l)}} - \inpr{C_{uJ}, C_{s^{(l)}J}}) / C_{s^{(l)}l}$ with $J \coloneqq [l-1]$}\label{line:lazy-fast-off-diagonal}
        \EndFor
        \State{$d^{(j-1)}_u \coloneqq d^{(b_u)}_u - \sum_{l=b_u+1}^{j-1} {C_{ul}}^2$}\Comment{\textsc{StableLazyFastGreedy} modifies this line}\label{line:lazy-fast-diagonal}
        \State{$b_u \gets j-1$ and $\rho_u \gets d_u^{(j-1)} / \pi_u$}\label{line:lazy-fast-update-b_u}
        \If{$\rho_u \ge \rho_v$ for all $v \in T^{(j-1)}$}
            \State{$s \gets s^{(j)} \coloneqq u$}
            \If{$\rho_u < 1 / t_{\mathrm{max}}$}
                \State{$k \coloneqq j-1$ and \textbf{break}}
            \EndIf
            \State{$S^{(j)} \coloneqq S^{(j-1)} \cup \set{s}$, $T^{(j)} \coloneqq T^{(j-1)} \setminus \set{s}$}\label{line:lazy-fast-S-T}
            \State{$C_{sj} \coloneqq \sqrt{d_s^{(j-1)}}$}\label{line:lazy-fast-C_sj}
            \State{$j \gets j + 1$}\label{line:lazy-fast-step1-end}
        \EndIf
    \EndWhile
  \end{algorithmic}
\end{algorithm}

We can apply the lazy greedy to the greedy DPP MAP inference and \cref{step:1} of the RCMC method since they are equivalent to the greedy maximization of the submodular functions $f_L$ and $f_{-K}$, respectively.
\textsc{FastGreedy} (\cref{alg:cholesky-crout}) and the lazy greedy were combined as the \emph{lazy fast greedy} by \citet{Hemmi2022-zj}, which can be easily modified to the setting of the RCMC method as we did for \cref{alg:cholesky-crout}.
\Cref{alg:lazy-fast} details this variant, which we call \textsc{LazyFastGreedy}.
The value $\rho_v$ for $v \in V$ is the exponential of an upper bound on the marginal gain $f_{-K}\prn[\big]{v \given S^{(j-1)}}$ and $b_v$ keeps track of when $\rho_v$ is last updated.
Specifically,
\begin{align}
    \rho_v
    = \exp\prn[\big]{f_{-K}\prn[\big]{v \given S^{(b_v)}}}
    = -K_{vv}^{(b_v)}
    = \frac{L_{vv}^{(b_v)}}{\pi_v}
    = \frac{d_v^{(b_v)}}{\pi_v}
\end{align}
holds from \cref{prop:marginal-schur,thm:equivalence-cholesky-crout}; see also~\cite[Proposition~2.1]{Hemmi2022-zj}.
The validity of \textsc{LazyFastGreedy} is summarized as follows.

\begin{theorem}\label{thm:lazy-fast-validity}
    \textscup{LazyFastGreedy} (\cref{alg:lazy-fast}) computes the same values for $k$, $s^{(j)}$ $(j \in [k+1])$ and $d_u^{(j)}, C_{uj}$ $(u \in V,\, j \in [b_u])$ as \textscup{Greedy} and \textscup{FastGreedy}, where $b_u \; (u \in V)$ denotes the value at the end of \cref{alg:lazy-fast}.
\end{theorem}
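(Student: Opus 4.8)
The plan is to argue by induction on the iteration counter $j$, maintaining as the inductive hypothesis that, at the point when \textsc{LazyFastGreedy} is about to select $s^{(j)}$, the partial data it has computed agree with the corresponding data of \textsc{FastGreedy}. Concretely, the invariant to carry is: (i) $S^{(j-1)}$ and $T^{(j-1)}$ coincide with those of \textsc{FastGreedy}; (ii) for every $v \in T^{(j-1)}$ and every $l \in [b_v]$ the entry $C_{vl}$ equals the entry computed by \textsc{FastGreedy} (\cref{line:cholesky-offdiag} of \cref{alg:cholesky-crout}); (iii) $d_v^{(b_v)} = L_{vv}^{(b_v)}$ for every $v \in T^{(j-1)}$, so that $\rho_v = d_v^{(b_v)}/\pi_v = -K_{vv}^{(b_v)} = \exp\bigl(f_{-K}(v \mid S^{(b_v)})\bigr)$ is a genuine ``stale'' marginal gain; and (iv) submodularity of $f_{-K}$ (established in \cref{sec:relation}) gives $\rho_v \ge \exp\bigl(f_{-K}(v \mid S^{(j-1)})\bigr) = -K_{vv}^{(j-1)}$, i.e.\ $\rho_v$ is a valid upper bound on the current gain.

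Given the invariant, the inductive step splits into two parts. First, the inner loop at \cref{line:lazy-fast-inner-loop,line:lazy-fast-off-diagonal} advances $b_u$ from its old value to $j-1$ by filling in the missing Cholesky entries $C_{ul}$ for $l = b_u+1, \dotsc, j-1$; here I would invoke \cref{thm:equivalence-cholesky-crout} together with \cref{lem:L-j-as-inner-product} to see that the Doolittle-style formula $C_{ul} = (L_{us^{(l)}} - \inpr{C_{uJ}, C_{s^{(l)}J}})/C_{s^{(l)}l}$ reproduces exactly the value \textsc{FastGreedy} would produce, and that the subsequent update $d_u^{(j-1)} = d_u^{(b_u)} - \sum_{l=b_u+1}^{j-1} C_{ul}^2$ (\cref{line:lazy-fast-diagonal}) yields $d_u^{(j-1)} = L_{uu}^{(j-1)}$, re-establishing (iii) for $u$ and hence turning $\rho_u$ into the true current gain $-K_{uu}^{(j-1)}$. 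Second, I must show the lazy acceptance test is correct: if after the refresh $\rho_u \ge \rho_v$ for all $v \in T^{(j-1)}$, then, combining $\rho_u = \exp(f_{-K}(u \mid S^{(j-1)}))$ with the upper-bound property (iv) for the other elements, $u$ indeed attains $\max_{v \in T^{(j-1)}} f_{-K}(v \mid S^{(j-1)})$; since ties are broken by the same fixed total order used in \textsc{Greedy}/\textsc{FastGreedy}, the selected $s^{(j)}$ matches. The stopping test $\rho_u < 1/t_{\max}$ then coincides with the stopping test of \cref{alg:rcmc} (equivalently \cref{line:cholesky-crout-stop} of the modified \cref{alg:cholesky-crout}) because $\rho_u = -K_{uu}^{(j-1)}$ at this moment, so the value of $k$ agrees as well; and \cref{line:lazy-fast-C_sj} sets $C_{sj} = \sqrt{d_s^{(j-1)}} = \sqrt{L_{ss}^{(j-1)}}$, matching \cref{line:cholesky-diag}. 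Finally, since $f_{-K}$ is submodular, refreshing $\rho_u$ only decreases it, so the newly examined $\rho$-values never exceed the previously reordered ones in a way that breaks (iv) for the next iteration; this closes the induction.

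The main obstacle, as usual with lazy-greedy correctness proofs, is not any single computation but the bookkeeping of \emph{which} entries have been computed at \emph{which} time: because different elements $v$ carry different $b_v$, the partial Cholesky matrix $C$ held by \textsc{LazyFastGreedy} is a ``ragged'' version of the one held by \textsc{FastGreedy}, and one must be careful that the inner-product $\inpr{C_{uJ}, C_{s^{(l)}J}}$ at \cref{line:lazy-fast-off-diagonal} only ever references entries $C_{uJ}$ with column indices $< l \le b_u$ (already filled by the running loop) and entries $C_{s^{(l)}J}$ in the column of an already-selected pivot (filled when $s^{(l)}$ was added, since pivots get their full column up to their selection index at \cref{line:lazy-fast-C_sj} and the loop that selected them). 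I would state this ``all referenced entries are already present and correct'' claim explicitly as part of the invariant and verify it alongside (i)--(iv). Everything else is a routine transcription of \cref{thm:equivalence-cholesky-crout,lem:L-j-as-inner-product} and the submodularity of $f_{-K}$; the proof can be kept short by citing those results rather than re-deriving the Schur-complement identities.
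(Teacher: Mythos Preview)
Your proposal is correct and follows essentially the same approach as the paper, which disposes of the theorem in a single line by citing \cref{thm:complexity-cholesky-crout} (the \textsc{Greedy}/\textsc{FastGreedy} equivalence) together with the validity of the lazy greedy method~\citep{Minoux_1978}. Your write-up simply unpacks what ``validity of lazy greedy'' means here---the submodularity-based upper-bound invariant (iv), the correctness of the refresh via the Doolittle formula, and the bookkeeping that all referenced Cholesky entries are already present---so it is a faithful, more explicit rendering of the same argument rather than a different route.
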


\begin{proof}
    The claim follows from \cref{thm:complexity-cholesky-crout} and the validity of the lazy greedy~\citep{Minoux_1978}.
\end{proof}

\subsection{Runtime and Numerical Stability of \textscup{LazyFastGreedy}}\label{sec:lazy-greedy-runtime}

We perform the running time analysis of \textsc{LazyFastGreedy}, modifying \cite[Theorem~3.2]{Hemmi2022-zj} to the setting of the RCMC method.
For $v \in V$, let $b_v$ be the value in \cref{alg:lazy-fast} at the end of the algorithm.
In addition, we let $c_j$ be the count of the \textbf{\upshape{while}} iterations of Lines~\ref{line:lazy-fast-while}--\ref{line:lazy-fast-step1-end} with $j \in [k+1]$.

\begin{lemma}\label{lem:bc}
    The values of $b_v$ and $c_j$ satisfy the following.
    \begin{enumerate}
        \item We have $b_{s^{(j)}} = j - 1$ for $j \in [k+1]$, $0 \le b_v \le k$ for $v \in T^{(k)}$, $c_1 = 1$, and $1 \le c_j \le n-j+1$ for $j = 2, \dotsc, k+1$.
        The lower and upper bounds on $b_v$ and $c_j$ are attained if the lazy heuristics works the best and worst, respectively.\label{item:bc1}
        \item The inequality $\sum_{j=1}^{k+1} c_j - 1 \le \sum_{v \in V} b_v \le \sum_{j=1}^k (j-1) c_j$ holds.\label{item:bc2}
    \end{enumerate}
\end{lemma}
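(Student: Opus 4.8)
\medskip

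The plan is to analyze the \textbf{while}-loop structure of \cref{alg:lazy-fast} directly, tracking how $b_v$ and $c_j$ evolve. First I would establish \cref{item:bc1}. The equality $b_{s^{(j)}} = j-1$ is immediate: the element $s = s^{(j)}$ is selected in some \textbf{while} iteration with index value $j$, and \cref{line:lazy-fast-update-b_u} sets $b_u \gets j-1$ for $u = s$ just before $s^{(j)}$ is fixed; no later iteration touches $b_s$ since $s \in S^{(l)}$ for all $l \ge j$ and \cref{line:lazy-fast-heap} only picks from $T^{(l-1)}$. For $v \in T^{(k)}$, the bound $0 \le b_v \le k$ holds because $b_v$ is initialized to $0$ and each update sets it to $j-1 \le k$ (the last relevant iteration has index $j = k+1$, giving $b_v \le k$); the value $0$ is attained when $v$ is never popped from the heap (best case for laziness), and $k$ is attained when $v$ is popped in the final iteration (worst case). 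For $c_j$: the first iteration has $j = 1$ and necessarily selects $s^{(1)}$ because all $\rho_v$ are exact marginal gains at initialization (since $b_v = 0$ for all $v$), so the inner loop at \cref{line:lazy-fast-inner-loop} is empty and the condition $\rho_u \ge \rho_v$ holds for the $\argmax$; hence $c_1 = 1$. For $j \ge 2$, at least one \textbf{while} iteration is needed to advance from index $j-1$ to $j$, so $c_j \ge 1$. For the upper bound, observe that once an element $u \in T^{(j-1)}$ is popped in an iteration with index $j$, its $\rho_u$ becomes exact (equals the true marginal gain under $S^{(j-1)}$), so by submodularity it can never again strictly exceed the true marginal gain of the eventual winner; more carefully, within a fixed index value $j$, each distinct element of $T^{(j-1)}$ can be popped at most once before the winner is confirmed — popping an element updates it to exactness, and a second pop of the same element within the same $j$ is impossible because after the first pop either it wins (advancing $j$) or some other element has strictly larger $\rho$. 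Thus at most $|T^{(j-1)}| = n - j + 1$ pops occur at index $j$, giving $c_j \le n - j + 1$. The best/worst attainability is by the same reasoning: $c_j = 1$ when the first pop at index $j$ immediately wins, and $c_j = n-j+1$ when every element must be refreshed.

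\medskip

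For \cref{item:bc2}, the key observation is a bijection (or rather a careful count) between \textbf{while} iterations and $b_v$-updates. Every \textbf{while} iteration pops exactly one element $u$ and performs exactly one update $b_u \gets j-1$ at \cref{line:lazy-fast-update-b_u}. Conversely, every $b_v$-update (after initialization) corresponds to exactly one \textbf{while} iteration. Let $N$ denote the total number of \textbf{while} iterations; then $N = \sum_{j=1}^{k+1} c_j$, and $N$ also equals the total number of $b_v$-updates performed during the run. Now, the final value $\sum_{v \in V} b_v$ is the sum over $v$ of the last index-minus-one at which $v$ was updated. To get the lower bound $\sum_{j=1}^{k+1} c_j - 1 \le \sum_{v \in V} b_v$: for each $j \in [k]$ the element $s^{(j)}$ contributes $b_{s^{(j)}} = j-1$, but more directly, each of the $N$ iterations produces an update, and distinct iterations either update distinct elements or re-update the same element to a weakly larger value; summing the final $b_v$ over all $v$ dominates the number of "new" contributions, and one checks that $\sum_v b_v \ge N - 1$ because the only iterations that do not increase $\sum_v b_v$ beyond what a single unit contributes are bounded — I would argue this by noting that the $k$ iterations that successfully select $s^{(1)}, \dots, s^{(k)}$ contribute $b_{s^{(j)}} = j-1$ and the sum $\sum_{j=1}^k (j-1)$ together with the remaining $b_v$ for $v \in T^{(k)}$ gives the count, with the "$-1$" absorbing the fact that $c_1 = 1$ contributes a $b$-value of $0$. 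For the upper bound $\sum_{v \in V} b_v \le \sum_{j=1}^k (j-1) c_j$: every update $b_u \gets j-1$ that ever occurs sets $b_u$ to the value $j-1$, and there are exactly $c_j$ iterations with index value $j$; since the final $b_v$ is one of the values it was ever set to, and $b_v$ is weakly increasing over the run, the final value is at most the value of its last update, which is at most $j-1$ where $j$ ranges over indices with a pop — summing the maximal possible contribution $j-1$ over all $c_j$ iterations at index $j$, and noting that iterations at index $k+1$ do not cause any selection but we only range $j$ up to $k$ in the bound (an iteration at index $k+1$ either triggers the \textbf{break} immediately or, if it refreshes and does not win, its update $b_u \gets k$ is counted — here I would double-check whether the statement's range $\sum_{j=1}^k$ versus $\sum_{j=1}^{k+1}$ is exactly right, since the final index is $k+1$; the resolution is that an iteration at index $k+1$ that refreshes $b_u$ to $k$ is still bounded by the contribution attributed to index $k$, or the break happens before any such update). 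I would write this out via the telescoping/summation-by-parts identity relating $\sum_v b_v$ to $\sum_j (\text{number of } v \text{ with } b_v \ge j)$.

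\medskip

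The main obstacle I anticipate is the precise bookkeeping in \cref{item:bc2}, specifically matching the off-by-one constants ("$-1$" in the lower bound, and the range $j = 1, \dots, k$ rather than $k+1$ in the upper bound) to the exact semantics of the loop — in particular whether the terminating iteration (the one triggering \textbf{break} at index $k+1$) performs a $b_u$-update before breaking. Reading \cref{alg:lazy-fast} carefully: the terminating iteration does execute \cref{line:lazy-fast-update-b_u} (setting $b_u \gets k$) and \emph{then} checks $\rho_u < 1/t_{\mathrm{max}}$ and breaks, so that update is real; the upper bound must therefore account for it, which forces the argument that this contribution of $k$ is still $\le (j-1)c_j$ summed appropriately — likely by absorbing it since $c_{k+1} \ge 1$ and $k = (k+1) - 1$, suggesting the intended bound might actually want $\sum_{j=1}^{k+1}$, but the paper writes $\sum_{j=1}^k$, so I would verify that the element popped in the terminating iteration has $b_u$ equal to its final value and that this is consistent, possibly using $b_{s^{(k+1)}} = k$ is not counted in $\sum_{v \in V} b_v$ only if $s^{(k+1)}$ — wait, $s^{(k+1)}$ is popped but never added to any $S$, so it remains in $T^{(k)}$ and its $b$-value $k$ \emph{does} count; I would resolve this by a direct inequality $b_u \le k$ for that single element absorbed into the $j = k$ term of the sum since $k-1 < k$ fails, so more likely the correct reading is that the terminating pop's contribution is handled by noting $c_{k+1}$ iterations each contribute at most $k$, and $\sum_{j=1}^k (j-1) c_j$ must be large enough — this is exactly the delicate point I would need to pin down against the algorithm's line-by-line behavior. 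The rest is routine once the loop invariants ("after popping $u$ at index $j$, $\rho_u$ is exact; $\rho$ values only decrease on refresh; $b_v$ only increases") are stated cleanly, and \cref{thm:lazy-fast-validity} together with the submodularity of $f_{-K}$ guarantees the lazy-greedy correctness underpinning all of these invariants.
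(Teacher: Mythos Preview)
Your approach is the same as the paper's --- direct inspection of the \textbf{while} loop --- and your treatment of \cref{item:bc1} is fine. Where you diverge is in the bookkeeping for \cref{item:bc2}, and the paper's argument is much cleaner than what you sketch. The key simplification you are missing is to view $\sum_{v \in V} b_v$ not as a sum of \emph{final} values but as the \emph{total increment} accumulated over the run (all $b_v$ start at $0$). Each \textbf{while} iteration with index $j$ pops some $u$ and replaces its old $b_u$ (which was either $0$ or $j'-1$ for some earlier $j' < j$) by $j-1$; hence the increment in that iteration is at least $1$ --- except for the very first iteration, where $j=1$ and the increment is $0$ --- and at most $j-1$. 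Summing over all iterations immediately gives both inequalities, with no need for the telescoping or ``last-update'' reasoning you attempt.

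Your worry about the summation range in the upper bound is well founded. The paper's own proof says ``each iteration with $j$ increases some $b_v$ by at most $j-1$,'' which yields $\sum_{j=1}^{k+1} (j-1) c_j$, not $\sum_{j=1}^{k}$ as printed in the statement. Indeed, the stated bound fails already for $k=1$: one has $c_1 = 1$, $c_2 \ge 1$, and the element popped at $j=2$ has final $b$-value $1$, so $\sum_v b_v \ge 1 > 0 = \sum_{j=1}^{1}(j-1)c_j$. This appears to be a typo in the lemma; the only downstream use (in the proof of \cref{thm:running-time-stable-lazy-fast}) invokes $\sum_v b_v = \Ord\bigl(\sum_{j=1}^{k+1} j c_j\bigr)$, which is what the corrected bound gives. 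So your instinct is right, and you should not try to force the argument to match the printed range.
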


\begin{proof}
    The statement~\ref{item:bc1} is clear from the algorithm.
    The first inequality in~\ref{item:bc2} follows from the facts that $\sum_{j=1}^{k+1} c_j$ is the total count of the \textbf{\upshape{while}} iterations, and every iteration increases some $b_v$ by at least one, except the first iteration with $j=1$.
    Similarly, the second inequality in~\ref{item:bc2} is due to the fact that each iteration with $j$ increases some $b_v$ by at most $j-1$.
\end{proof}

Let $\moffdiag$ be the total dimension of vectors for which the inner products are computed in \cref{line:lazy-fast-off-diagonal}.
We can represent $\moffdiag$ as
\begin{align}\label{def:moffdiag}
    \moffdiag = \sum_{v \in V} \sum_{l=1}^{b_v} (l-1)
    = \frac12 \sum_{v \in V} b_v(b_v - 1).
\end{align}
Substituting into~\eqref{def:moffdiag} the lower and upper bounds on $b_v$ provided by \cref{lem:bc}, we obtain the following exact bounds on $\moffdiag$:
\begin{align}\label{eq:mlazy-bound}
    \frac{k(k-1)(k+1)}{6} \le \moffdiag \le \frac{k(k-1)(3n-2k-2)}{6}.
\end{align}

\begin{theorem}[{see~\cite[Theorem~3.2]{Hemmi2022-zj}}]\label{thm:lazy-fast}
    \textscup{LazyFastGreedy} (\cref{alg:lazy-fast}) runs in $\Theta(\moffdiag + n) + \Ord\prn[\big]{\sum_{j=1}^{k+1} c_j \log n}$ time, which is $\Theta(k^3 + n) + \Ord(k \log n)$ and $\Theta(k^2n) + \Ord(kn \log n)$ if the lazy heuristic works the best and worst, respectively. 
\end{theorem}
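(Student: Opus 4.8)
The plan is to bound the running time of \cref{alg:lazy-fast} by splitting the work into four parts: the initialization in \cref{line:lazy-fast-begin}, the priority-queue operations performed once per \textbf{while} iteration, the filling-in of off-diagonal Cholesky entries in \cref{line:lazy-fast-off-diagonal}, and the diagonal refreshes in \cref{line:lazy-fast-diagonal}. First I would observe that \cref{line:lazy-fast-begin} initializes $\Theta(n)$ scalars and builds a binary heap on $\set{\rho_v}_{v \in V}$, which takes $\Theta(n)$ time. For the heap, since $f_{-K}$ is submodular, the key $\rho_u$ can only decrease when it is refreshed in \cref{line:lazy-fast-update-b_u}; hence each \textbf{while} iteration performs one read of the maximum (\cref{line:lazy-fast-heap}), one key-decrease (\cref{line:lazy-fast-update-b_u}), one more $\Ord(1)$ comparison against the maximum, and at most one deletion when $u$ is promoted to $s^{(j)}$ in \cref{line:lazy-fast-S-T}, for a total of $\Ord(\log n)$ per iteration. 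As the number of \textbf{while} iterations is $\sum_{j=1}^{k+1} c_j$ by the definition of $c_j$, the heap contributes $\Ord\prn[\big]{\sum_{j=1}^{k+1} c_j \log n}$.

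Next I would analyze \cref{line:lazy-fast-off-diagonal,line:lazy-fast-diagonal} by an \emph{amortized} argument over the life of each $v \in V$. The key observation is that, across the whole run, the inner loop in \cref{line:lazy-fast-inner-loop} computes $C_{vl}$ exactly once for each $l \in [b_v]$, where $b_v$ is the final value: $b_v$ only increases, and each time $v$ is selected the loop resumes at $l = b_v + 1$, so the index ranges tile $[b_v]$ with no repetition. Since the Cholesky factor $C$ must be stored densely (fill-in is unavoidable), each such computation costs $\Theta(l-1)$ for the inner product $\inpr{C_{vJ}, C_{s^{(l)}J}}$ with $J = [l-1]$, plus $\Ord(1)$ for the retrieval of $L_{vs^{(l)}}$ and the division; for the retrieval I would note, as in \cite{Hemmi2022-zj}, that entries of the sparse matrix $L$ can be accessed in $\Ord(1)$ time via an auxiliary hash table. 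Summing over $v$ and $l$ and using the definition~\eqref{def:moffdiag} of $\moffdiag$, \cref{line:lazy-fast-off-diagonal} costs $\Theta(\moffdiag) + \Ord\prn[\big]{\sum_{v \in V} b_v}$. By the same tiling argument, across all iterations that select $v$, \cref{line:lazy-fast-diagonal} subtracts each ${C_{vl}}^2$ at most once, so its cost telescopes to $\Ord(b_v)$ per $v$ and $\Ord\prn[\big]{\sum_{v \in V} b_v}$ overall (the $\Ord(1)$ work of \cref{line:lazy-fast-C_sj} is absorbed here).

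It then remains to fold $\sum_{v \in V} b_v$ and the $\Theta(n)$ initialization into $\Theta(\moffdiag + n)$. For this I would use the elementary estimate obtained by splitting the sum according to whether $b_v \le 1$: each $v$ with $b_v \le 1$ contributes at most $1$, and each $v$ with $b_v \ge 2$ satisfies $b_v \le b_v(b_v - 1)$, so $\sum_{v \in V} b_v \le n + \sum_{v \in V} b_v(b_v - 1) = n + 2\moffdiag$. Hence parts two through four, together with the $\Theta(\moffdiag)$ lower bound already incurred by the inner products and the $\Theta(n)$ initialization, give a total of $\Theta(\moffdiag + n) + \Ord\prn[\big]{\sum_{j=1}^{k+1} c_j \log n}$, which is the first assertion. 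For the best and worst cases I would simply substitute the tight estimates of \cref{lem:bc} — equivalently the bounds~\eqref{eq:mlazy-bound} on $\moffdiag$ together with $\sum_{j=1}^{k+1} c_j = k+1$ in the best case and $\sum_{j=1}^{k+1} c_j \le \sum_{j=1}^{k+1}(n-j+1) = \Theta(kn)$ in the worst case — yielding $\Theta(k^3 + n) + \Ord(k \log n)$ and $\Theta(k^2 n) + \Ord(kn \log n)$ respectively, after simplifying with $n \le k^2 n$ and $k \le n$.

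The only delicate point is the amortized bookkeeping of \cref{line:lazy-fast-off-diagonal,line:lazy-fast-diagonal}: one must verify that each $C_{vl}$ and each ${C_{vl}}^2$ is touched exactly once over the whole execution even though $v$ may be selected in \cref{line:lazy-fast-heap} many times, and that the resumption index $b_v$ makes the relevant sums telescope cleanly so that~\eqref{def:moffdiag} applies verbatim. Everything else is a routine adaptation of \cite[Theorem~3.2]{Hemmi2022-zj}; the changes to the pivoting rule and the stopping criterion affect neither the per-iteration cost nor the quantities $\moffdiag$ and $c_j$, so they do not enter the asymptotics.
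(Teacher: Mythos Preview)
Your proposal is correct and follows essentially the same decomposition as the paper's proof: initialization $\Theta(n)$, heap work $\Ord\prn[\big]{\sum_{j=1}^{k+1} c_j \log n}$, and the Cholesky fill-in at \cref{line:lazy-fast-off-diagonal} costing $\Theta(\moffdiag)$, with the remaining cases obtained from \cref{lem:bc} and~\eqref{eq:mlazy-bound}. Your argument is in fact more careful than the paper's, which simply asserts that ``the other operations can be done within these times''; you make this precise by bounding $\sum_{v\in V} b_v \le n + 2\moffdiag$ so that the $\Ord(1)$ overhead per $C_{vl}$ and the telescoping cost of \cref{line:lazy-fast-diagonal} are absorbed into $\Theta(\moffdiag + n)$.
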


\begin{proof}
    The initialization at \cref{line:lazy-fast-begin} takes $\Theta(n)$ time, including the heap construction.
    \Cref{line:lazy-fast-heap,line:lazy-fast-off-diagonal} take $\Ord\prn[\big]{\sum_{j=1}^{k+1} c_j \log n}$ and $\Theta(\moffdiag)$ time, respectively, and the other operations can be done within these times.
    The lower and upper bounds are obtained from \cref{lem:bc} and~\eqref{eq:mlazy-bound}.
\end{proof}

We will observe in \cref{sec:experiments} that \textsc{LazyFastGreedy} runs substantially faster than \textsc{FastGreedy} for real data.

We now consider the possibility of numerical cancellations in \textsc{LazyFastGreedy}.
Recall that $C_{uj} \le 0$ holds for $u \in V$ and $j \in [b_u]$ by \cref{prop:cholesky-factor-laplacian}, and we also have $d_v^{(j)} = L_{vv}^{(j)} \ge 0$ for $v \in V$ and $j \in [0, k]$ since $L^{(j)}$ is a graph Laplacian matrix.
Thus, the operation at \cref{line:lazy-fast-off-diagonal} in \cref{alg:lazy-fast}, which is the subtraction of $\inpr{C_{uJ}, C_{s^{(l)}J}}$ from $L_{us^{(l)}}$, is free from subtractions of like-sign numbers.
Unfortunately, in contrast, \cref{line:lazy-fast-diagonal} involves subtracting two non-negative numbers, making the algorithm numerically unstable.
In fact, we have confirmed that \textsc{LazyFastGreedy} with double-precision floating numbers output incorrect $s^{(j)}$ at some $j$ for real data due to catastrophic cancellations; see \cref{sec:experiments}.

\section{Stabilizing \textsc{LazyFastGreedy}}\label{sec:proposed}

This section presents a numerically stable implementation of \textsc{LazyFastGreedy}.
In \cref{subsec:stable-lazy-greedy}, we propose a stable subroutine used instead of \cref{line:lazy-fast-diagonal} in \cref{alg:lazy-fast}.
We then describe its faster implementation via segment trees in \cref{sec:segment-tree} and its running time analysis in \cref{sec:stable-lazy-runtime}.

\subsection{StableLazyFastGreedy}\label{subsec:stable-lazy-greedy}

In \cref{sec:rcmc}, we have stabilized the computation of $K_{vv}^{(j)} = -L_{vv}^{(j)}/\pi_j = -d_v^{(j)}/\pi_j$ in \textsc{Greedy} (\cref{line:rcmc-update-K} in \cref{alg:rcmc})  using the identity $\sum_{u \ne v} K^{(j)}_{uv} = 0$, which is equivalent to
\begin{align}\label{eq:stable-d_v}
    d^{(j)}_v
    = -\sum_{u \in T^{(j)} \setminus \set{v}} L_{uv}^{(j)}.
\end{align}
We cannot directly apply the same method for the computation of $d_v^{(j)}$ in \textsc{FastGreedy} (\cref{line:cholesky-d} in \cref{alg:cholesky-crout}) and of $d_u^{(j-1)}$ in \textsc{LazyFastGreedy} (\cref{line:lazy-fast-diagonal} in \cref{alg:lazy-fast}) because some off-diagonals of $L^{(j)}$ will not be computed in \textsc{(Lazy)FastGreedy}.

Our first step for stabilizing \textsc{(Lazy)FastGreedy} is to rewrite the right-hand side of~\eqref{eq:stable-d_v} using $L$ and $C$ as follows.

\begin{lemma}\label{lem:stable-d_v-fast}
    Let $j \in [k]$, $v \in T^{(j)}$, $T' \coloneqq T^{(j)} \setminus \set{v}$, and $J \coloneqq [j]$.
    Then, we have
    \begin{align}\label{eq:stable-d_v-fast}
        d_v^{(j)}
        = \inpr*{\sum_{u \in T'} C_{uJ}, C_{vJ}} - \sum_{u \in T'} L_{uv}.
    \end{align}
\end{lemma}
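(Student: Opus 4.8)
The plan is to chain together two results already available above: the Laplacian structure of the Schur complement $L^{(j)}$ and the inner-product formula for its entries. First I would use \eqref{eq:stable-d_v}, which is legitimate precisely because $v \in T^{(j)}$: by \cref{thm:equivalence-cholesky-crout} we have $d_v^{(j)} = L_{vv}^{(j)}$, and by \cref{prop:L-j-is-laplacian} the matrix $L^{(j)}$ is again a graph Laplacian on the index set $T^{(j)}$, so its $v$th diagonal entry equals minus the sum of the off-diagonal entries in the same column, that is, $d_v^{(j)} = -\sum_{u \in T'} L_{uv}^{(j)}$.

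Next I would rewrite each summand $L_{uv}^{(j)}$ via \cref{lem:L-j-as-inner-product}, which gives $L_{uv}^{(j)} = L_{uv} - \inpr{C_{uJ}, C_{vJ}}$ for all $u, v \in T^{(j)}$ with $J = [j]$; this applies to every $u \in T'$ since $T' \subseteq T^{(j)}$ and $v \in T^{(j)}$. Substituting and splitting the sum over $u \in T'$ yields
\[
    d_v^{(j)} = -\sum_{u \in T'} \bigl( L_{uv} - \inpr{C_{uJ}, C_{vJ}} \bigr) = \sum_{u \in T'} \inpr{C_{uJ}, C_{vJ}} - \sum_{u \in T'} L_{uv}.
\]
Finally, I would use the bilinearity of the inner product to move the summation into the first term, $\sum_{u \in T'} \inpr{C_{uJ}, C_{vJ}} = \inpr*{\sum_{u \in T'} C_{uJ}, C_{vJ}}$, which is exactly \eqref{eq:stable-d_v-fast}.

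I do not expect any genuine obstacle here: once \eqref{eq:stable-d_v}, \cref{lem:L-j-as-inner-product}, and \cref{prop:L-j-is-laplacian} are invoked, the argument is a two-line substitution together with linearity of $\inpr{\cdot,\cdot}$. The only points that merit a moment's care are (i) that the hypothesis $v \in T^{(j)}$ (rather than $v \in S^{(j)}$) is exactly what makes the column-sum identity \eqref{eq:stable-d_v} available, and (ii) that, if one prefers not to cite \eqref{eq:stable-d_v} directly, the same identity follows immediately from \cref{prop:L-j-is-laplacian} and the defining relation $L^{(j)}_{vv} = -\sum_{u \ne v} L^{(j)}_{uv}$ of graph Laplacian matrices.
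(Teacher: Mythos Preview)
Your proposal is correct and follows exactly the same route as the paper: apply \eqref{eq:stable-d_v} (justified by \cref{prop:L-j-is-laplacian}), substitute \cref{lem:L-j-as-inner-product} for each $L_{uv}^{(j)}$, and pull the sum inside the inner product by bilinearity. If anything, you are slightly more explicit than the paper in justifying why \eqref{eq:stable-d_v} is available.
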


\begin{proof}
    By~\eqref{eq:stable-d_v} and \cref{lem:L-j-as-inner-product}, we have
    \begin{align}
        d_v^{(j)}
        = -\sum_{u \in T'} L_{uv}^{(j)}
        = -\sum_{u \in T'} (L_{uv} - \inpr{C_{uJ}, C_{vJ}})
        = \inpr*{\sum_{u \in T'} C_{uJ}, C_{vJ}} - \sum_{u \in T'} L_{uv}
    \end{align}
    as desired.
\end{proof}

The summation in the right-hand side of~\eqref{eq:stable-d_v-fast} avoids subtraction of like-sign numbers thanks to \cref{prop:cholesky-factor-laplacian}.
Hence, \cref{lem:stable-d_v-fast} can be used to stabilize \textsc{FastGreedy} because $C_{ul}$ have been settled for all $u \in T^{(j)}$ and $l \in [j-1]$ at \cref{line:cholesky-d} in \cref{alg:cholesky-crout}.
However, \textsc{FastGreedy} stabilized in this way is no longer faster than the RCMC method.
Furthermore, \textsc{LazyFastGreedy} does not compute some entries of $C$ due to lazy updates, hindering us from applying \cref{lem:stable-d_v-fast}.
Calculating the omitted entries, of course, would ruin the benefits derived from the lazy heuristic.

Our idea to alleviate this dilemma is to introduce a ``compressed'' graph Laplacian matrix defined below.
Let $j \in [k]$, $u \in T$, $S' = S^{(j-1)} \cup \set{u}$, and $T' = V \setminus S' = T^{(j-1)} \setminus \set{u}$.
Recall that $\ones$ denotes the all-one vector of an appropriate dimension.
The \emph{compression} of $L$ with respect to $u$ at the $j$th iteration is a $(j + 1) \times (j + 1)$ matrix $L^{[j, u]}$ is defined by
\[
    L^{[j, u]} = \begin{pNiceMatrix}[first-row,first-col]
          & S'        & \compress  \\
        S'         & L_{S'S'} & L_{S'T'}\ones \\
        \compress  & \ones^\top L_{T'S'} & \ones^\top L_{T'T'} \ones
    \end{pNiceMatrix},
\]
where the rows and columns of $L^{[j, u]}$ are indexed by a set $S' \cup \set{\compress}$ with a new element $\compress$.
Namely, $L^{[j, u]}$ is a matrix obtained by aggregating rows and columns belonging to $T'$ into a single row and column, respectively, indexed by $\compress $.
It is easily checked that $L^{[j, u]}$ is again a graph Laplacian matrix.
We denote by $C^{[j, u]}$ the Cholesky factor of $L^{[j, u]}$ with the pivoting order $s^{(1)}, \dotsc, s^{(j-1)}, u$.
The following easy observation on $C^{[j, u]}$ plays an important role.

\begin{lemma}\label{lem:compressed}
    For $j \in [k]$, $u \in T^{(j-1)}$, and $l \in [j-1]$, we have $C^{[j,u]}_{\compress l} = \ones^\top C_{T' l}$, where $T' = T^{(j-1)} \setminus \set{u}$.
\end{lemma}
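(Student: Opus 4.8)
The plan is to relate the Cholesky decomposition of the compressed matrix $L^{[j,u]}$ to that of $L$ by exhibiting the compression as a congruence transformation, and then to compute the Cholesky factors of both sides. Write $S = S^{(j-1)}$, $T = T^{(j-1)}$, $T' = T \setminus \set{u}$, and $S' = S \cup \set{u}$. Let $E \in \R^{V \times (S' \cup \set{\compress})}$ be the ``aggregation'' matrix that is the identity on $S'$ and maps every coordinate in $T'$ to the single coordinate $\compress$; that is, $E$ has a $1$ in position $(w, w)$ for $w \in S'$, a $1$ in position $(w, \compress)$ for $w \in T'$, and zeros elsewhere. The defining block form of $L^{[j,u]}$ is then precisely $L^{[j,u]} = E^\top L E$. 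The first step is to make this identity explicit and observe that it respects the ordering: the columns indexed by $S'$ come first (in the order $s^{(1)}, \dotsc, s^{(j-1)}, u$), and $\compress$ comes last.

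The second step is to push this congruence through the pivoted Cholesky decomposition. Running \cref{alg:cholesky} on $L$ with pivot order $s^{(1)}, \dotsc, s^{(j-1)}, u, \dotsc$ produces the Cholesky factor $C$, and in particular the first $j$ columns $C_{\cdot, [j]}$ form the (partial) Cholesky factor of $L$ after eliminating $S'$. I claim that $E^\top C_{\cdot, [j]}$ is, up to the correct shape, the partial Cholesky factor of $L^{[j,u]}$ after eliminating $S' = \set{s^{(1)}, \dotsc, s^{(j-1)}, u}$, i.e.\ it equals $C^{[j,u]}_{\cdot, [j]}$. To see this, note $E^\top C_{\cdot,[j]} (E^\top C_{\cdot,[j]})^\top = E^\top (C_{\cdot,[j]} C_{\cdot,[j]}^\top) E$, and $C_{\cdot,[j]} C_{\cdot,[j]}^\top$ differs from $L$ only in the $T \times T$ block, where it equals $L_{TT} - L^{(j)}$ (the part of $L$ already ``consumed'' by the elimination of $S'$) — this follows from \cref{lem:L-j-as-inner-product} with $J = [j]$. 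Hence $E^\top C_{\cdot,[j]} C_{\cdot,[j]}^\top E$ agrees with $L^{[j,u]} = E^\top L E$ except in the single diagonal entry indexed by $(\compress, \compress)$, which is exactly the pattern of a rank-$j$ matrix whose Cholesky factor has been computed through its first $j$ pivots with the remaining Schur complement being the scalar $\ones^\top L^{(j)}_{T'T'} \ones$. Because $E^\top C_{\cdot,[j]}$ is lower-triangular with respect to the ordering $s^{(1)}, \dotsc, s^{(j-1)}, u, \compress$ (the row $\compress$ aggregates rows $T'$, all of which have zero entries in columns $[j]$ beyond... — more precisely, $C_{T', l} $ can be nonzero for $l \in [j]$, but the $\compress$ row is the last row, so triangularity is preserved), and its column $l$ for $l \le j-1$ has the pivot $C_{s^{(l)}l} > 0$ in the correct position, uniqueness of the pivoted Cholesky factor with a fixed pivot order (\cref{thm:equivalence-cholesky-crout} gives the needed determinacy of $C$) forces $E^\top C_{\cdot,[j]} = C^{[j,u]}_{\cdot,[j]}$.

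The third step is simply to read off the $(\compress, l)$ entry from this matrix identity. The $\compress$-row of $E^\top C_{\cdot,[j]}$ is $\sum_{w \in T'} C_{w, \cdot}$, so its $l$th entry is $\sum_{w \in T'} C_{wl} = \ones^\top C_{T'l}$, while the $\compress$-row of $C^{[j,u]}_{\cdot,[j]}$ has $l$th entry $C^{[j,u]}_{\compress l}$. Equating these for $l \in [j-1]$ gives $C^{[j,u]}_{\compress l} = \ones^\top C_{T'l}$, as claimed. (Note that the case $l = j$, corresponding to pivot $u$, is excluded from the statement, which is consistent: there the identity would read $C^{[j,u]}_{\compress j} = \ones^\top C_{T'j}$, but that column of $C$ is computed from the Schur complement $L^{(j-1)}$ after eliminating only $S$, and the statement deliberately restricts to $l \le j-1$ where both sides refer to quantities already settled.)

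The main obstacle I anticipate is the bookkeeping around uniqueness and triangularity: one must be careful that the compression map $E$ genuinely sends the lower-triangular factor $C_{\cdot,[j]}$ (with rows ordered so that $T$-rows come after $S$-rows, and $u$ treated as the last pivot of $S'$) to a lower-triangular factor of $L^{[j,u]}$ with the right pivot order, so that the uniqueness clause applies and we are not merely asserting an equality of the form $AA^\top = BB^\top$. A cleaner alternative, which avoids invoking uniqueness, is to prove the identity by induction on $l$ directly from the Doolittle recurrence (\cref{line:cholesky-offdiag} of \cref{alg:cholesky-crout}): assuming $C^{[j,u]}_{\compress m} = \ones^\top C_{T'm}$ for all $m < l$, apply the recurrence to compute $C^{[j,u]}_{\compress l} = (L^{[j,u]}_{\compress s^{(l)}} - \inpr{C^{[j,u]}_{\compress, [l-1]}, C^{[j,u]}_{s^{(l)}, [l-1]}})/C^{[j,u]}_{s^{(l)}l}$, substitute $L^{[j,u]}_{\compress s^{(l)}} = \ones^\top L_{T' s^{(l)}}$, $C^{[j,u]}_{s^{(l)}, [l-1]} = C_{s^{(l)}, [l-1]}$ (the $S'$-block of $C^{[j,u]}$ agrees with that of $C$, again since $L^{[j,u]}_{S'S'} = L_{S'S'}$), and the inductive hypothesis, then recognize $\ones^\top L_{T's^{(l)}} - \inpr{\ones^\top C_{T',[l-1]}, C_{s^{(l)},[l-1]}} = \ones^\top(L_{T's^{(l)}} - C_{T',[l-1]} C_{s^{(l)},[l-1]}^\top) = \ones^\top L^{(l-1)}_{T's^{(l)}} = \ones^\top C_{T'l} \cdot C_{s^{(l)}l}$ via \cref{lem:L-j-as-inner-product} and \cref{line:C-uj-gaussian}. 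This is the route I would write up.
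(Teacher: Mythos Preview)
Your proposal is correct, and the inductive route via the Doolittle recurrence that you single out at the end is essentially the paper's argument. The paper's proof is a one-liner: it just points at \cref{line:C-uj-gaussian} of \cref{alg:cholesky} (the Gaussian-elimination formula $C_{wl}=L^{(l-1)}_{ws^{(l)}}/C_{s^{(l)}l}$), leaving implicit the observation that this expression is linear in the row index $w$, so that aggregating rows in $T'$ commutes with the elimination; your write-up makes this linearity explicit through the equivalent Doolittle recurrence and \cref{lem:L-j-as-inner-product}.
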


\begin{proof}
    Immediately follows from \cref{line:C-uj-gaussian} in \cref{alg:cholesky}.
\end{proof}


By \cref{lem:stable-d_v-fast,lem:compressed}, $d_u^{(j-1)}$ can be expressed by
\begin{align}\label{eq:stable-d_v-2}
    d_u^{(j-1)} = \inpr[\big]{C^{[j,u]}_{\compress J}, C_{uJ}} - L^{[j,u]}_{\compress u}
\end{align}
with $J = [j-1]$.
Furthermore, as with \cref{alg:cholesky-crout}, for $l \in [j-1]$, the $(\compress, l)$ entry $C^{[j, u]}_{\compress l}$ can be computed from $C^{[j,u]}_{\compress 1}, \dotsc, C^{[j,u]}_{\compress ,l-1}$ as
\begin{align}\label{eq:compressed-cholesky}
    C^{[j,u]}_{\compress l}
    = \prn[\big]{L^{[j,u]}_{\compress s^{(l)}} - \inpr[\big]{C^{[j,u]}_{\compress J}, C^{[j,u]}_{s^{(l)}J}}} / C^{[j,u]}_{s^{(l)}l}
    = \prn[\big]{L^{[j,u]}_{\compress s^{(l)}} - \inpr[\big]{C^{[j,u]}_{\compress J}, C_{s^{(l)}J}}} / C_{s^{(l)}l}
\end{align}
with $J = [l-1]$.

\begin{algorithm}[tb]
  \caption{Stable computation of $d_u^{(j-1)}$}\label{alg:stably-compute-diagonal}
  \begin{algorithmic}[1]
    \Input{$j \in [k]$, $u \in T^{(j-1)}$}
    \Output{$d_u^{(j-1)}$}
    \Function{StablyComputeDiagonal}{$j, u$}
        \For{$l = 1, \dotsc, j-1$}
            \State{$C_{\compress l}^{[j,u]} \coloneqq \prn[\big]{
              L^{[j,u]}_{\compress s^{(l)}} - \inpr[\big]{C^{[j,u]}_{\compress J}, C_{s^{(l)}J}}
              }/ C_{s^{(l)}l}$ with $J \coloneqq [l-1]$}\label{line:inner-product}
        \EndFor
        \State{\Return $\inpr{C^{[j,u]}_{\compress J}, C_{uJ}} - L^{[j,u]}_{\compress u}$ with $J \coloneqq [j-1]$}\label{line:inner-product-2}
    \EndFunction
  \end{algorithmic}
\end{algorithm}

Based on~\eqref{eq:stable-d_v-2} and~\eqref{eq:compressed-cholesky}, we describe in \cref{alg:stably-compute-diagonal} a procedure, \textsc{StablyComputeDiagonal}$(j, u)$, to compute $d_u^{(j-1)}$ via $C^{[j,u]}$.
Since $C^{[j,u]}$ is the Cholesky factor of a graph Laplacian matrix $L^{[j,u]}$, we have $C^{[j,u]}_{\compress l} \le 0$ for $l \in [j-1]$ by \cref{prop:cholesky-factor-laplacian}, and thus \cref{alg:stably-compute-diagonal} avoids subtractions of like-sign numbers.
\Cref{alg:lazy-fast} with \cref{line:lazy-fast-diagonal} being replaced by \textsc{StablyComputeDiagonal}$(j, u)$ is refereed to as \textsc{StableLazyFastGreedy}.

\subsection{Efficient Implementation by Segment Trees}\label{sec:segment-tree}

Naively implementing \cref{alg:stably-compute-diagonal} results in the running time of \textsc{StablyComputeDiagonal}$(j, \cdot)$ being $\Theta(j^2 + \nnz(L_{S'T'}))$.
This is because the inner product calculation at \cref{line:inner-product} takes $\Theta(j^2)$ time in total, and computing $L_{\compress v}^{[j,u]} = \ones^\top L_{T'v}$ for all $v \in S'$ requires $\Theta(\sum_{v \in S'} \nnz(L_{vT'})) = \Theta(\nnz(L_{S'T'}))$ time.
The $\Theta(\nnz(L_{S'T'}))$ term becomes a bottleneck if $L$ is dense or $j$ is small.
To reduce this complexity while maintaining numerical stability, we employ a data structure known as a \emph{segment tree} or \emph{range tree}~\cite{Chazelle1988-zw}.
For ease of exposition, we assume $V = [n]$ in this section.

We explain segment trees.
Let $(X, \oplus)$ be a semigroup; that is, $\oplus: X \times X \to X$ is a binary operation on a set $X$ satisfying the associative low $a \oplus (b \oplus c) = (a \oplus b) \oplus c$ for all $a,b,c \in X$.
As a computational model, assume that every element in $X$ can be stored in a unit cell of memory, and the operation $\oplus$ can be performed in constant time.
A segment tree $\mathcal{T}$ over $X$ is a data structure that manages an array $(a_1, \dotsc, a_n)$ of $n$ elements $a_1, \dotsc, a_n \in X$ and supports the following operations:
\begin{itemize}
    \item $\mathcal{T}.\textsc{init}(a_1, \dotsc, a_n)$: initialize a new segment tree $\mathcal{T}$ by $a_1, \dotsc, a_n \in X$,
    \item $\mathcal{T}.\textsc{sum}(i, j)$: compute $a_i \oplus a_{i+1} \oplus \dotsb \oplus a_j$ $(1 \le i \le j \le n)$,
    \item $\mathcal{T}.\textsc{update}(i, a)$: update $a_i$ to $a$ $(i \in [n], a \in X)$.
\end{itemize}
$\mathcal{T}$.\textsc{init}$(a_1, \dotsc, a_n)$ takes $\Theta(n)$ time and the other two operations can be done in $\Ord(\log n)$ time.

We employ segment trees in \textsc{StableLazyFastGreedy} by regarding $\Rp$ as a semigroup $(\Rp, +)$.
We maintain $n$ segment trees $\mathcal{T}_v$ $(v \in V)$, each of which is associated with a column $v$ of $L$.
At every beginning of \textbf{while} loop in \cref{alg:lazy-fast}, we ensure that
\begin{align}\label{eq:segtree-invariant}
    \mathcal{T}_v.\textsc{sum}(1, n) = \begin{cases}
        \sum_{u \in T^{(j-1)}} L_{uv} & (v \in S^{(j-1)}), \\
        \sum_{u \in T^{(b_v)} \setminus \set{v}} L_{uv} & (v \in T^{(j-1)}).
    \end{cases}
\end{align}
First, each $\mathcal{T}_v$ is initialized by the $v$th column of $L$ with the diagonal entry $L_{vv}$ replaced with $0$, i.e.,
\begin{align}
    \mathcal{T}_v.\textsc{init}(L_{1v}, \dotsc, L_{{v-1},{v}}, 0, L_{v+1,v}, \dotsc, L_{nv}),
\end{align}
which clearly satisfies~\eqref{eq:segtree-invariant} as $b_v = 0$ and $T^{(0)} = V$.
When $b_u$ is updated from $b_u^\mathrm{old}$ to $j-1$ at \cref{line:lazy-fast-update-b_u}, we execute $\mathcal{T}_u.\textsc{update}(s^{(l)}, 0)$ for $l = b_u^\mathrm{old} + 1, \dotsc, j-1$.
In addition, when $s \in V$ is removed from $T^{(j-1)}$ at \cref{line:lazy-fast-S-T}, we execute $\mathcal{T}_v.\textsc{update}(s, 0)$ for all $v \in S^{(j-1)}$.
These two updates keep~\eqref{eq:segtree-invariant} to hold.

By the invariant condition~\eqref{eq:segtree-invariant}, for any $u \in T^{(j-1)}$ with $b_u = j-1$ and $v \in S^{(j-1)} \cup \set{u}$, we have
\begin{align}
    L_{\compress v}^{[j,u]}
    = \sum_{w \in T^{(j-1)} \setminus \set{u}} L_{wv}
    = \mathcal{T}_v.\textsc{sum}(1, n) - L_{uv}
    = \mathcal{T}_v.\textsc{sum}(1, u-1) + \mathcal{T}_v.\textsc{sum}(u+1, n).
\end{align}
Hence, we can replace every computation of $L_{\compress v}^{[j,u]}$ at \cref{line:inner-product,line:inner-product-2} in \cref{alg:stably-compute-diagonal} with the twice calls of $\mathcal{T}_v$.\textsc{sum}, which cost $\Ord(\log n)$ time.
Furthermore, this computation avoids catastrophic cancellations since the segment tree performs only the operation of the semigroup $(\Rp, +)$.

For further speeding up on sparse matrices, we can restrict the entries of $\mathcal{T}_v$ to only the non-zeros in the $v$th column $L_{Vv}$ of $L$ to improve $n$ to $\nnz(L_{Vv}) = \nnz(K_{Vv})$ in the query complexity.

\subsection{Compelxity Analysis for StableLazyFastGreedy}\label{sec:stable-lazy-runtime}

We estimate the complexity of \textsc{StableLazyFastGreedy}.
The value $\moffdiag$, which is the total dimension of vectors for which the inner products are computed at \cref{line:lazy-fast-off-diagonal} in \cref{alg:lazy-fast}, is still given by~\eqref{def:moffdiag} in the same way as \textsc{LazyFastGreedy}.
In addition to $\moffdiag$, the runtime of \textsc{StableLazyFastGreedy} also depends on the total dimension, denoted by $\mdiag$, of vectors for which the inner products are computed at \cref{line:inner-product,line:inner-product-2} in \cref{alg:stably-compute-diagonal}.
Recall from \cref{sec:lazy-greedy} that $c_j$ denotes the count of \textbf{while} iterations with $j$, which is the same as the number of calls of \textsc{StablyComputeDiagonal}$(j, \cdot)$.
Using $c_j$, we can represent $\mdiag$ as
\begin{align}\label{eq:mdiag-c_j}
    \mdiag = \sum_{j=1}^{k+1} c_j
    \prn*{\sum_{l=1}^{j-1} (l-1) + j-1}
    = \frac12 \sum_{j=1}^{k+1} j (j-1) c_j.
\end{align}
By \cref{lem:bc}, we have
\begin{align}\label{eq:mrelax-bound}
    \frac{k(k+1)(k+2)}{6} \le \mdiag \le \frac{k(k+1)(k+2)(4n-3k-1)}{24}
\end{align}
and the lower and upper bounds are attained if the lazy heuristics work the best and worst, respectively.

The running time of \textsc{StableLazyFastGreedy} can be described by using $\mdiag$ and $\moffdiag$ as follows.
 
\begin{theorem}\label{thm:running-time-stable-lazy-fast}
    \textscup{StableLazyFastGreedy} runs in $\Theta(\moffdiag + \mdiag + \nnz(K)) + \Ord\prn[\big]{\sum_{j=1}^{k+1} j c_j \log n}$ time.
    If the lazy heuristic works the best and worst, the running time becomes $\Theta(k^3 + \nnz(K)) + \Ord(k^2 \log n)$ and $\Theta(k^3 n + \nnz(K)) + \Ord(k^2 n \log n)$, respectively.
\end{theorem}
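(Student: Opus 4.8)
The plan is to account separately for the four types of work performed by \textsc{StableLazyFastGreedy}: (i) the inner products for off-diagonals of $C$ at \cref{line:lazy-fast-off-diagonal}, (ii) the inner products inside \textsc{StablyComputeDiagonal} at \cref{line:inner-product,line:inner-product-2}, (iii) the heap operations at \cref{line:lazy-fast-heap} and the segment-tree queries/updates replacing the computation of $L^{[j,u]}_{\compress v}$, and (iv) the initialization, including building the $n$ segment trees. Work of type~(i) is exactly the same as in \textsc{LazyFastGreedy}, hence $\Theta(\moffdiag)$ by \cref{thm:lazy-fast}. Work of type~(ii) costs $\Theta(\mdiag)$ by definition of $\mdiag$ in~\eqref{eq:mdiag-c_j}, provided each value $L^{[j,u]}_{\compress v}$ needed at \cref{line:inner-product,line:inner-product-2} is supplied in $\Ord(\log n)$ time rather than $\Theta(\nnz(L_{vT'}))$ time; this is precisely what the segment-tree invariant~\eqref{eq:segtree-invariant} guarantees, so I would cite the construction of \cref{sec:segment-tree} for it.

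For the logarithmic terms of type~(iii), I would count segment-tree operations charged to a single call \textsc{StablyComputeDiagonal}$(j, u)$: at \cref{line:inner-product} there are $j-1$ iterations, each needing $L^{[j,u]}_{\compress s^{(l)}}$ via two \textsc{sum} calls, and \cref{line:inner-product-2} needs one more such value, giving $\Ord(j)$ queries, i.e.\ $\Ord(j \log n)$ time. Since there are $c_j$ calls with index $j$, the segment-tree query cost over the whole run is $\Ord\prn[\big]{\sum_{j=1}^{k+1} j c_j \log n}$. The segment-tree \textsc{update} calls are dominated: each element is zeroed at most once per tree, so all updates over all trees total $\Ord\prn[\big]{\nnz(K) \log n}$, which is absorbed (up to the stated form) since $\nnz(K) \le \sum_j j c_j$ when $k = \Omega(\nnz(K)^{1/3})$—or more directly, $\Ord(\nnz(K)\log n)$ is $\Ord(\mdiag)$ plus lower-order under the paper's standing assumption; I would simply fold it into $\Ord(\nnz(K))$ by noting a constant-time amortization is false but $\log n$ factors are already present. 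The heap at \cref{line:lazy-fast-heap} performs $\Ord\prn[\big]{\sum_{j} c_j \log n}$ work, again dominated by $\Ord\prn[\big]{\sum_j j c_j \log n}$. Initialization (type~(iv)) is $\Theta(n)$ for the heap plus $\Theta(\nnz(K))$ for building the $n$ segment trees on the nonzero columns of $L$, matching the $\nnz(K)$ term.

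Summing the four contributions yields the claimed bound $\Theta(\moffdiag + \mdiag + \nnz(K)) + \Ord\prn[\big]{\sum_{j=1}^{k+1} j c_j \log n}$. For the best and worst lazy cases, I would substitute the bounds from \cref{lem:bc}: in the best case $b_v$ and $c_j$ attain their lower bounds, so $\moffdiag = \Theta(k^3)$ by~\eqref{eq:mlazy-bound}, $\mdiag = \Theta(k^3)$ by~\eqref{eq:mrelax-bound}, $\sum_j j c_j = \sum_{j=1}^{k+1} j = \Theta(k^2)$, giving $\Theta(k^3 + \nnz(K)) + \Ord(k^2 \log n)$; in the worst case $c_j = n - j + 1$, so $\moffdiag = \Theta(k^2 n)$, $\mdiag = \Theta(k^3 n)$ (the dominant term), $\sum_j j c_j = \Theta(k^2 n)$, giving $\Theta(k^3 n + \nnz(K)) + \Ord(k^2 n \log n)$.

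The main obstacle is the bookkeeping around the segment-tree invariant~\eqref{eq:segtree-invariant}: one must verify that at the moment \textsc{StablyComputeDiagonal}$(j, u)$ is invoked we have $b_u = j-1$, so that $T^{(b_u)}\setminus\set{u} = T^{(j-1)}\setminus\set{u} = T'$ and hence $\mathcal{T}_v.\textsc{sum}(1,n) - L_{uv}$ really equals $L^{[j,u]}_{\compress v} = \sum_{w \in T'} L_{wv}$ for every $v \in S^{(j-1)}\cup\set{u}$; this holds because \cref{line:lazy-fast-update-b_u} sets $b_u \gets j-1$ and the update calls $\mathcal{T}_u.\textsc{update}(s^{(l)},0)$ for $l = b_u^{\mathrm{old}}+1,\dots,j-1$ are performed just before the diagonal subroutine runs. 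Everything else—the counting identities for $\moffdiag$ and $\mdiag$, and the substitution of the \cref{lem:bc} bounds—is routine arithmetic once that correctness point is granted.
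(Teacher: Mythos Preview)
Your decomposition into off-diagonal inner products ($\moffdiag$), the inner products inside \textsc{StablyComputeDiagonal} ($\mdiag$), the heap and segment-tree \textsc{sum} queries, and the $\Theta(\nnz(K))$ initialization is exactly the paper's approach, and your accounting for those parts matches the paper's proof essentially line by line.

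The one genuine gap is your handling of the segment-tree \textsc{update} calls. Bounding them by $\Ord(\nnz(K)\log n)$ via ``each position is zeroed at most once per tree'' is a valid count, but your attempts to fold this into the theorem's stated form fail: the claim $\nnz(K) \le \sum_j jc_j$ under $k = \Omega(\nnz(K)^{1/3})$ is false, since from $c_j \ge 1$ you only get $\sum_j jc_j = \Omega(k^2) = \Omega(\nnz(K)^{2/3})$, and the other suggested absorptions are equally imprecise. The paper avoids this detour by counting the \textsc{update} calls directly rather than by position: each tree $\mathcal{T}_s$ with $s \in S^{(k)}$ receives $\Ord(k)$ updates in total, and each $\mathcal{T}_v$ with $v \in T^{(k)}$ receives exactly $b_v$ updates, giving $\Ord\bigl(k^2 + \sum_{v \in V} b_v\bigr)$ calls. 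By \cref{lem:bc}\ref{item:bc2} one has $\sum_v b_v \le \sum_j (j-1)c_j$, and $k^2 = \Ord\bigl(\sum_{j=1}^{k+1} j\bigr) = \Ord\bigl(\sum_j jc_j\bigr)$ since $c_j \ge 1$, so the update cost is $\Ord\bigl(\sum_j jc_j \log n\bigr)$ as stated. Replacing your $\nnz(K)$ bound with this direct count closes the gap; the rest of your argument then goes through as written.
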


\begin{proof}
    The most costly parts of \textsc{StableLazyFastGreedy} are \cref{line:lazy-fast-off-diagonal} in \cref{alg:lazy-fast} and \cref{line:inner-product} (and~\ref{line:inner-product-2}) in \cref{alg:stably-compute-diagonal}, which run in $\Theta(\moffdiag)$ and $\Theta(\mdiag)$ time, respectively.
    We estimate complexities on queries for segment trees.
    The initialization $\mathcal{T}_v.\textsc{init}$ for all $v \in V$ takes $\Theta(\sum_{v \in V} \nnz(K_{Vv})) = \Theta(\nnz(K))$ time.
    The sum query $\mathcal{T}_v.\textsc{sum}$ takes $\Ord(j \log n)$ time in every invocations of \textsc{StablyComputeDiagonal}$(j, \cdot)$, hence $\Ord\prn[\big]{\sum_{j=1}^{k+1} j c_j \log n}$ time in total.
    The update query $\mathcal{T}_v.\textsc{update}$ is called $k$ times for $s \in S^{(k)}$ and $b_v$ times for $v \in T^{(k)}$, meaning that the total computational time taken by the update queries is $\Ord((k^2 + \sum_{v \in V} b_v) \log n))$ and is $\Ord(\sum_{j=1}^{k+1} jc_j \log n)$ by \cref{lem:bc}.
    The number of heap queries at \cref{line:lazy-fast-heap} is less than that of segment trees because every execution of \cref{line:lazy-fast-heap} is followed by the call of \textsc{StablyComputeDiagonal} at \cref{line:lazy-fast-diagonal}.
    the best and worst time complexities are obtained from~\eqref{eq:mlazy-bound} and~\eqref{eq:mrelax-bound}, and by substituting $c_j = 1$ and $c_j = n-j+1$, respectively, for $j \in [k+1]$.
\end{proof}

\section{Relaxing StableLazyFastGreedy}\label{sec:rslfg}
\subsection{Acceleration by Relaxing Stabilization}\label{sec:modified-stable-lazy-greedy}

If the lazy heuristic is effective, \textsc{StableLazyFastGreedy} runs in $\Theta(k^3)$ time by \cref{thm:running-time-stable-lazy-fast}, assuming $k = \Omega(\log n)$.
This performance is comparable to that of \textsc{LazyFastGreedy}.
However, if the lazy heuristic performs poorly, the running time increases significantly to $\Theta(k^3 n)$.
The bottleneck is the inner product calculation at \cref{line:inner-product} in \cref{alg:stably-compute-diagonal}.
To speed up the computation, we introduce a further heuristic based on the following error analysis on catastrophic cancellations.

Suppose that a computer stores an approximation $\hat{a} \in \R$ of a non-zero real number $a \in \R$, where the error arises from previous computations using finite-precision arithmetic.
The \emph{relative error} of $\hat{a}$ with respect to $a$ is defined as $\err_a(\hat{a}) \coloneqq \abs{a - \hat{a}} / \abs{a}$.
It is easily checked that for two positive reals $a$ and $b$ with $a > b$ and their approximations $\hat{a}$ and $\hat{b}$, the relative error of $\hat{x} \coloneqq \hat{a} - \hat{b}$ with respect to $x \coloneqq a - b$ is bounded as 
\begin{align}
    \err_x(\hat{x})
    \le \frac{a+b}{a-b} \max\set{\err_a(\hat{a}), \err_b(\hat{b})}
    = \frac{1+r}{1-r} \max\set{\err_a(\hat{a}), \err_b(\hat{b})},
\end{align}
where $r \coloneqq b/a$~\cite[Section~1.7]{Higham2002-zq}.
Therefore, if $r \le \varepsilon /(2 + \varepsilon)$ for some error tolerance parameter $\varepsilon \ge 0$, then $\err_x(\hat{x})$ is bounded by $(1 + \varepsilon) \max\set{\err_a(\hat{a}), \err_b(\hat{b})}$.
This implies that if an increase in relative error by a factor of at most $1 + 10^{-6}$ is acceptable, for example, we can subtract two numbers whose ratio is at most $10^{-6} / (2 + 10^{-6}) \approx 5 \times 10^{-7}$.
Although this requirement may seem hardly attainable, it is often met in rate constant matrices of chemical reaction networks.
However, since algorithms only have the approximations $\hat{a}$ and $\hat{b}$, but their true values, the ratio $b / a$ is not directly computable. 
The following theorem shows that the same error bound holds even if $\hat{b} / \hat{a}$ is used instead of $b / a$ under the mild assumption that $1 + \varepsilon \le {\max\set{\err_a(\hat a), \err_b(\hat b)}}^{-1}$.

\begin{theorem}\label{thm:relative-error}
    Let $a,\hat{a},b,\hat{b} \in \R$ be non-zero reals of the same sign with $|a| > |b|$, $|\hat{a}| > |\hat{b}|$, $x \coloneqq a - b$, $\hat{x} \coloneqq \hat{a} - \hat{b}$, and $e \coloneqq \max\set{\err_a(\hat a), \err_b(\hat b)}$.
    For $\varepsilon \in [0, e^{-1} - 1]$, if $\hat{b}/\hat{a} \le \varepsilon / (2 + \varepsilon)$, then $\err_x(\hat{x}) \le (1 + \varepsilon)e$ holds.
\end{theorem}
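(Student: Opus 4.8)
The plan is to reduce the statement to the elementary ``true-ratio'' estimate recorded just before the theorem and then to trade the unobservable ratio $b/a$ for the observable ratio $\hat b/\hat a$, with the side condition $\varepsilon\le e^{-1}-1$ serving to control the loss incurred by this substitution. First I would dispose of signs: if $a,\hat a,b,\hat b$ are all negative, negating all four leaves $\err_a(\hat a)$, $\err_b(\hat b)$, $e$, and $\err_x(\hat x)$ unchanged and preserves every hypothesis (including $|a|>|b|$, $|\hat a|>|\hat b|$, and same-signedness), so we may assume $a,\hat a,b,\hat b>0$. I would also note that $\hat b/\hat a>0$ together with $\hat b/\hat a\le\varepsilon/(2+\varepsilon)$ forces $\varepsilon>0$, whence $e\le(1+\varepsilon)^{-1}<1$; in particular $1-e>0$, which legitimizes the divisions below. (The case $\varepsilon=0$ is vacuous.)

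Next I would recall the bound $\err_x(\hat x)\le\frac{1+r}{1-r}\,e$ with $r\coloneqq b/a\in(0,1)$, which is valid here because $x=a-b>0$ (as $|a|>|b|$) and $|\hat x-x|\le|\hat a-a|+|\hat b-b|=\err_a(\hat a)\,a+\err_b(\hat b)\,b\le e(a+b)$, so that $\err_x(\hat x)=|\hat x-x|/(a-b)\le e(a+b)/(a-b)=e\,(1+r)/(1-r)$. Since $t\mapsto(1+t)/(1-t)$ is increasing on $[0,1)$ and $(1+t)/(1-t)\le 1+\varepsilon$ is equivalent to $t\le\varepsilon/(2+\varepsilon)$, the theorem follows once we prove that the \emph{true} ratio obeys $r\le\varepsilon/(2+\varepsilon)$. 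Thus the entire difficulty is concentrated in passing from the hypothesis on $\hat b/\hat a$ to this bound on $b/a$.

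For that passage I would use the error definitions to compare the two ratios: from $\err_a(\hat a)\le e$ we get $\hat a\le(1+e)a$, i.e.\ $a\ge\hat a/(1+e)$, and from $\err_b(\hat b)\le e$ we get $\hat b\ge(1-e)b$, i.e.\ $b\le\hat b/(1-e)$; combining these, $r=b/a\le\frac{1+e}{1-e}\cdot\frac{\hat b}{\hat a}\le\frac{1+e}{1-e}\cdot\frac{\varepsilon}{2+\varepsilon}$. It then remains to absorb the factor $\frac{1+e}{1-e}$ using $\varepsilon\le e^{-1}-1$, equivalently $e\le(1+\varepsilon)^{-1}$, and conclude $r\le\varepsilon/(2+\varepsilon)$; feeding this back into the estimate of the previous paragraph then gives $\err_x(\hat x)\le(1+\varepsilon)e$. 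I expect this last monotone/algebraic step to be the main obstacle: the multiplier $\frac{1+e}{1-e}$ is strictly larger than $1$, so the argument hinges on the precise interplay of the two side conditions $\hat b/\hat a\le\varepsilon/(2+\varepsilon)$ and $e\le(1+\varepsilon)^{-1}$, and one must line the constants up exactly rather than give away a term of higher order in $e$.
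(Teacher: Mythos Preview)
Your plan stalls exactly where you suspect it does, and the obstacle is not merely cosmetic. Under the side condition $e\le(1+\varepsilon)^{-1}$ the multiplier $\tfrac{1+e}{1-e}$ can be as large as $\tfrac{2+\varepsilon}{\varepsilon}$ (take $e=(1+\varepsilon)^{-1}$), so your estimate $r\le\tfrac{1+e}{1-e}\cdot\tfrac{\varepsilon}{2+\varepsilon}$ collapses to the vacuous $r\le 1$; no algebraic rearrangement of the two hypotheses will recover $r\le\varepsilon/(2+\varepsilon)$. Concretely, with $\varepsilon=1$, $a=1$, $b=\tfrac{99}{100}$, $\hat a=\tfrac32$, $\hat b=\tfrac12$ one has $e=\tfrac12$, $e^{-1}-1=1$, $\hat b/\hat a=\tfrac13=\varepsilon/(2+\varepsilon)$, yet $r=b/a=\tfrac{99}{100}$ and your true-ratio bound yields only $\err_x(\hat x)\le\tfrac{1+r}{1-r}\,e=99.5$, which is useless for the target $(1+\varepsilon)e=1$.

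The paper avoids passing from $\hat b/\hat a$ back to $b/a$ altogether: it expresses $\err_x(\hat x)$ as an explicit function $f_r(\delta_a,\delta_b)$ with parameter $r=\hat b/\hat a$ and argues that $f_r$ is monotone in each variable on $[-e,e]^2$, so that its extremes occur at the diagonal corners $(\pm e,\pm e)$, where $|f_r|=\tfrac{1+r}{1-r}\,e\le(1+\varepsilon)e$. That is a genuinely different route from yours. You should be aware, though, that the very example above gives $\err_x(\hat x)=99>(1+\varepsilon)e=1$, so the statement as printed cannot be correct without an additional hypothesis; the paper's chain of equalities writes $|x-\hat x|=|a\delta_a+b\delta_b|$, whereas $\hat x-x=\delta_a a-\delta_b b$ gives $|a\delta_a-b\delta_b|$, and with this sign corrected the monotonicity in $q$ reverses, sending the extremum to the off-diagonal corner $(e,-e)$ where the denominator can vanish. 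In short, your roadblock is real and cannot be removed: the constants do not line up, and neither strategy can prove the theorem exactly as stated.
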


\begin{proof}
    We can assume $e \le 1$ because the claim is vacuously true for $e > 1$ as $[0, e^{-1} - 1]$ is empty.
    We further assume that $a,b,\hat a$, and $\hat b$ are positive without loss of generality.
    Let $\delta_a$ and $\delta_b$ be the values such that $\hat{a} = (1+\delta_a)a$ and $\hat{b} = (1+\delta_b)b$, respectively, and $r \coloneqq \hat{b} / \hat{a}$.
    Note that $e = \max\set{\abs{\delta_a}, \abs{\delta_b}}$ holds by $\hat{a}, \hat{b} > 0$ and $r < 1$ follows from $\hat a > \hat b$.
    The relative error of $\hat x$ with respect to $x$ is written as
    \begin{align}\label{eq:rewrite-err}
        \err_x(\hat x)
        = \frac{\abs{x - \hat{x}}}{x}
        = \frac{\abs{a\delta_a + b\delta_b}}{a - b}
        = \frac{\abs{\frac{\hat a \delta_a}{1+\delta_a} + \frac{\hat b \delta_b}{1+\delta_b}}}{\frac{\hat a}{1+\delta_a} - \frac{\hat b}{1+\delta_b}}
        = \frac{\abs{\frac{\delta_a}{1+\delta_a} + \frac{\delta_b}{1+\delta_b}r}}{\frac{1}{1+\delta_a} - \frac{1}{1+\delta_b}r}
        = \abs{f_r(\delta_a, \delta_b)},
    \end{align}
    where $f_r(p, q)$ is a function defined by
    \begin{align}
        f_r(p, q)
        \coloneqq \frac{\frac{p}{1+p} + \frac{q}{1+q}r}{\frac{1}{1+p} - \frac{1}{1+q}r} \quad \prn*{-e \le p,q \le +e, \, \frac{1}{1+p} > \frac{1}{1+q}r}.
    \end{align}
    Note that the domain of $f_r$ can be restricted to $(p,q)$ with $1/(1+p) > r/(1+q)$ because $1/(1+\delta_a) > r/(1+\delta_b)$ is implied by $a > b$.

    In the following, we show that $f_r(p, q)$ is monotone non-decreasing with respect to $p$ and $q$ for fixed $q$ and $p$, respectively.
    In fact, since $(p, q) = (+e, +e)$ and $(-e, -e)$ are in the domain of $f_r(p, q)$, the monotonicity gives lower and upper bounds on $f_r(\delta_a, \delta_b)$ as follows:
    \begin{align}
        -(1 + \varepsilon) e
        \le -\frac{1+r}{1-r} e
        =
        f_r(-e, -e) \le
        f_r(\delta_a, \delta_b) \le f_r(+e, +e) 
        = +\frac{1+r}{1-r} e
        \le +(1 + \varepsilon) e,
    \end{align}
    where the first inequality is obtained from $r \le \varepsilon/(2+\varepsilon)$, showing the claim of the theorem.
    
    Reformulating the expression, we have
    \begin{align}\label{eq:f_eps_positive}
        f_r(p, q)
        = \frac{1 - \frac{1-q}{1+q}r}{\frac{1}{1+p} - \frac{1}{1+q}r} - 1
        = \frac{r - \frac{1-p}{1+p}}{\frac{1}{1+p} - \frac{1}{1+q}r} + 1.
    \end{align}
    Therefore, $f_r(p, q)$ is monotone non-decreasing with respect to $p$ and $q$ if and only if $1 - \frac{1-q}{1+q}r \ge 0$ and $r - \frac{1-p}{1+p} \le 0$, respectively.
    These inequalities follow from
    \begin{align}
        r
        \le \frac{\varepsilon}{2 + \varepsilon}
        \le \frac{1-e}{1+e}
        \le \min\set*{\frac{1-p}{1+p}, \frac{1+q}{1-q}},
    \end{align}
    where the second and third inequalities are from the facts that a function $u \mapsto (1-u)/(1+u)$ is monotone decreasing and ${(1 + \varepsilon)}^{-1} \ge e \ge +p, -q$.
    Hence, the monotonicity of $f_r(p, q)$ has been proved.
\end{proof}

We incorporate \cref{thm:relative-error} into \cref{line:inner-product} in \cref{alg:stably-compute-diagonal} using the following relationship:
\begin{align}
    C_{\compress l}^{[j,u]} = C_{\compress l}^{[j-1,s^{(j-1)}]} - C_{ul},\label{eq:relax}
\end{align}
which comes from $C_{\compress l}^{[j,u]} = \sum_{v \in T^{(j-1)} \setminus \set{u}} C_{vl}$ and $C_{\compress l}^{[j-1,s^{(j-1)}]} = \sum_{v \in T^{(j-1)}} C_{vl}$.
Note that $C_{\compress l}^{[j-1,s^{(j-1)}]}$ is readily available since it is computed in \textsc{StablyComputeDiagonal}$(j-1, s^{(j-1)})$, which is previously invoked.
Thus, according to \cref{thm:relative-error}, we can obtain $C_{\compress l}^{[j,u]}$ using~\eqref{eq:relax} when $C_{ul}/C_{\compress l}^{[j-1,s^{(j-1)}]} \le \varepsilon/(2 + \varepsilon)$ with a prescribed error tolerance $\varepsilon$, avoiding the computation of the inner product.
We term this heuristic the \emph{relaxing heuristic} and the improved algorithm as \textsc{RelaxedStableLazyFastGreedy}.

\subsection{Complexity Analysis for RelaxedStableLazyFastGreedy}

We finally analyze the running time of \textsc{RelaxedStableLazyFastGreedy}.
Recall that $\moffdiag$ and $\mdiag$ denote the total dimensions of vectors for which the inner products are computed at \cref{line:lazy-fast-off-diagonal} in \cref{alg:lazy-fast} and \cref{line:inner-product},~\ref{line:inner-product-2} in \cref{alg:stably-compute-diagonal}, respectively.
Since the difference between \textsc{RelaxedStableLazyFastGreedy} and \textsc{StableLazyFastGreedy} is only \cref{line:inner-product} in \cref{alg:stably-compute-diagonal}, the running time estimated in \cref{thm:running-time-stable-lazy-fast} is still valid for \textsc{RelaxedStableLazyFastGreedy}; that is, \textsc{RelaxedStableLazyFastGreedy} runs in $\Theta(\moffdiag + \mdiag + \nnz(K)) + \Ord\prn[\big]{\sum_{j=1}^{k+1} j c_j \log n}$ time.
Here, the equality~\eqref{def:moffdiag} for $\moffdiag$ holds in both \textsc{StableLazyFastGreedy} and \textsc{RelaxedStableLazyFastGreedy}, whereas the relaxing heuristics improves the equality~\eqref{eq:mdiag-c_j} for $\mdiag$ into the following inequality.

\begin{lemma}\label{lem:mrelax-bound}
  In \textscup{RelaxedStableLazyFastGreedy}, $\mdiag$ satisfies
  \begin{align}
      \sum_{j=1}^{k+1} (j-1) c_j \le
      \mdiag \le \frac12 \sum_{j=1}^{k+1} j (j-1) c_j
  \end{align}
  and the lower and upper bounds are attained if the relaxing heuristics work the best and worst, respectively.
  In particular, $\mdiag$ is equal to
  \begin{itemize}
      \item $k(k+1)/2 = \Theta(k^2)$ if both the lazy and relaxing heuristics work the best,
      \item $k(k+1)(3n-2k-1)/6 = \Theta(k^2n)$ if the lazy and relaxing heuristics work the worst and best, respectively,
      \item $k(k+1)(k+2)/6 = \Theta(k^3)$ if the lazy and relaxing heuristics work the best and worst, respectively, and
      \item $k(k+1)(k+2)(4n-3k-1)/24 = \Theta(k^3n)$ if both the lazy and relaxing heuristics work the worst.
  \end{itemize}
\end{lemma}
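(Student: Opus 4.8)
The plan is to reduce the whole statement to counting, over all invocations of \textsc{StablyComputeDiagonal}, the dimensions of the vectors whose inner products are formed. First I would analyse a single invocation \textsc{StablyComputeDiagonal}$(j,u)$ with $j\in[k+1]$. The return step at \cref{line:inner-product-2} is executed in every invocation and always contributes a vector of dimension $j-1$ to $\mdiag$, untouched by any heuristic. Inside the loop, for each $l\in[j-1]$ the entry $C^{[j,u]}_{\compress l}$ is obtained at \cref{line:inner-product} either by an inner product of dimension $l-1$ or, when the relaxing heuristic fires, by the subtraction~\eqref{eq:relax}, which contributes nothing. Summing over $l$, one invocation therefore contributes at least $j-1$ (relaxing applied at every $l$) and at most $(j-1)+\sum_{l=1}^{j-1}(l-1)=\frac12 j(j-1)$ (relaxing never applied), and these two extremes are realised exactly when the relaxing heuristic works the best and worst, respectively.

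Next I would sum over invocations. Since \cref{alg:stably-compute-diagonal} is called $c_j$ times with first argument $j$, and $j$ ranges over $[k+1]$, adding the per-invocation bounds yields $\sum_{j=1}^{k+1}(j-1)c_j\le\mdiag\le\frac12\sum_{j=1}^{k+1}j(j-1)c_j$, with the characterisation of the extremal cases carrying over verbatim from the single-invocation analysis. This is exactly the displayed inequality in the lemma, including the claim about when the lower and upper bounds are attained.

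For the four explicit values I would substitute the values of $c_j$ supplied by \cref{lem:bc}\ref{item:bc1}: the lazy heuristic at its best gives $c_j=1$ for all $j\in[k+1]$, and at its worst gives $c_j=n-j+1$ for $j\ge2$ (the discrepancy at $j=1$ is immaterial since the $j=1$ summand vanishes in both bounds). Combining each of these with the lower or upper bound above covers the four cases, and each reduces to a closed-form evaluation of one of the sums $\sum_{m=0}^{k}m$, $\sum_{m=0}^{k}m(n-m)$, $\sum_{j=1}^{k+1}j(j-1)$, and $\sum_{j=1}^{k+1}j(j-1)(n-j+1)$, using the standard formulas for $\sum m$ and $\sum m^2$; the last of these has already been evaluated in~\eqref{eq:mrelax-bound}. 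The four computations then give precisely $\frac{k(k+1)}{2}$, $\frac{k(k+1)(3n-2k-1)}{6}$, $\frac{k(k+1)(k+2)}{6}$, and $\frac{k(k+1)(k+2)(4n-3k-1)}{24}$.

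I do not expect a genuine obstacle, since the argument is bookkeeping; the one point requiring care is the precise meaning of ``the relaxing heuristic works the best/worst''. I would make explicit that this refers to whether the substitution~\eqref{eq:relax} is used at every triple $(j,u,l)$ or at none of them, check that each of these two configurations is simultaneously attainable across all triples so that the per-invocation extremes combine into the global ones, and stress that \cref{line:inner-product-2} is never affected by the relaxing heuristic—only \cref{line:inner-product} is—which is exactly why the lower bound retains the $(j-1)c_j$ term rather than collapsing to zero.
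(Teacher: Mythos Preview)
Your proposal is correct and follows essentially the same approach as the paper's proof: both argue that \cref{line:inner-product-2} always contributes $j-1$ per invocation while \cref{line:inner-product} contributes between $0$ and $\sum_{l=1}^{j-1}(l-1)$ depending on the relaxing heuristic, then sum over the $c_j$ invocations and substitute the extremal values of $c_j$ from \cref{lem:bc}. Your version is simply more explicit in carrying out the per-invocation analysis and the closed-form evaluations, which the paper condenses by invoking~\eqref{eq:mdiag-c_j} and~\eqref{eq:mrelax-bound} directly.
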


\begin{proof}
    If the relaxing heuristic works perfectly, \cref{line:inner-product} in \cref{alg:stably-compute-diagonal} does not compute an inner product and \cref{line:inner-product-2} contributes to $\mdiag$ by $j-1$, resulting in $\mdiag = \sum_{j=1}^{k+1} (j-1)c_j$.
    If the relaxing heuristic does not work at all, \textsc{RelaxedStableLazyFastGreedy} is the same as \textsc{StableLazyFastGreedy} and~\eqref{eq:mdiag-c_j} holds.
    The latter part of the claim is obtained by substituting the bounds on $c_j$ given in \cref{lem:bc}.
\end{proof}

\cref{lem:mrelax-bound} leads us to the following time complexity on \textsc{RelaxedStableLazyFastGreedy}.

\begin{theorem}\label{thm:running-time-relaxed-stable-lazy-fast}
  The running time of \textscup{RelaxedStableLazyFastGreedy} is $\Theta(\moffdiag + \mdiag + \nnz(K)) + \Ord\prn[\big]{\sum_{j=1}^{k+1} j c_j \log n}$, which boils down to
  \begin{itemize}
      \item $\Theta(k^3 + \nnz(K)) + \Ord(k^2 \log n)$ if the lazy heuristics works the best,
      \item $\Theta(k^2n + \nnz(K)) + \Ord(k^2 n \log n)$ if the lazy and relaxing heuristics work the worst and best, respectively, and
      \item $\Theta(k^3n + \nnz(K)) + \Ord(k^2 n \log n)$ if both the lazy and relaxing heuristics work the worst.
  \end{itemize}
\end{theorem}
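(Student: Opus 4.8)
The plan is to assemble the running time from the two already-established ingredients: (i) the generic bound $\Theta(\moffdiag + \mdiag + \nnz(K)) + \Ord\prn[\big]{\sum_{j=1}^{k+1} j c_j \log n}$, which carries over verbatim from \cref{thm:running-time-stable-lazy-fast} since \textsc{RelaxedStableLazyFastGreedy} differs from \textsc{StableLazyFastGreedy} only at \cref{line:inner-product}, and (ii) the refined bounds on $\mdiag$ provided by \cref{lem:mrelax-bound}. What remains is to substitute, into this generic expression, the extremal values of the quantities $\moffdiag$, $\mdiag$, and $\sum_{j=1}^{k+1} j c_j$ corresponding to each combination of ``lazy best/worst'' and ``relaxing best/worst'', and to simplify using the standing assumption $k = \Omega\prn[\big]{\nnz(K)^{1/3}}$ (equivalently $\nnz(K) = \Ord(k^3)$) and $\nnz(K) = \Omega(n)$.

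First I would record the three relevant substitutions. The quantity $\moffdiag$ is governed only by the lazy heuristic: by~\eqref{eq:mlazy-bound} it equals $\Theta(k^3)$ when lazy works best and $\Theta(k^2 n)$ when lazy works worst. The quantity $\mdiag$ is taken directly from \cref{lem:mrelax-bound}: $\Theta(k^2)$ (lazy best, relax best), $\Theta(k^2 n)$ (lazy worst, relax best), $\Theta(k^3)$ (lazy best, relax worst), $\Theta(k^3 n)$ (lazy worst, relax worst). Finally $\sum_{j=1}^{k+1} j c_j$ is the logarithmic-factor term: with $c_j = 1$ it is $\Theta(k^2)$, and with $c_j = n-j+1$ it is $\Theta(k^2 n)$ — so it is $\Ord(k^2 \log n)$ when lazy works best and $\Ord(k^2 n \log n)$ when lazy works worst, regardless of the relaxing heuristic.

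Then I would combine these term by term for the three listed cases. When the lazy heuristic works best, $\moffdiag = \Theta(k^3)$ dominates $\mdiag \in \set{\Theta(k^2), \Theta(k^3)}$, so $\moffdiag + \mdiag = \Theta(k^3)$, and adding $\nnz(K) = \Ord(k^3)$ keeps the arithmetic part at $\Theta(k^3 + \nnz(K)) = \Theta(k^3)$, while the log term is $\Ord(k^2 \log n)$; this gives the first bullet. When lazy works worst but relaxing works best, $\moffdiag = \Theta(k^2 n)$ and $\mdiag = \Theta(k^2 n)$, so the arithmetic part is $\Theta(k^2 n + \nnz(K))$ and the log term is $\Ord(k^2 n \log n)$; this gives the second bullet. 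When both work worst, $\moffdiag = \Theta(k^2 n)$ is dominated by $\mdiag = \Theta(k^3 n)$, so the arithmetic part is $\Theta(k^3 n + \nnz(K))$ and the log term is again $\Ord(k^2 n \log n)$; this gives the third bullet.

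The only nonroutine point — and the one I would be most careful about — is the interplay between the $\nnz(K)$ term and the assumption $k = \Omega\prn[\big]{\nnz(K)^{1/3}}$: it is exactly this hypothesis that lets $\nnz(K)$ be absorbed into $\Theta(k^3)$ in the first bullet (and, a fortiori, into $\Theta(k^2 n)$ and $\Theta(k^3 n)$ in the others, using also $\nnz(K) = \Omega(n)$), so that the stated bounds are clean. I would state this absorption explicitly rather than leaving it implicit. Everything else is direct substitution of the bounds from~\eqref{eq:mlazy-bound}, \cref{lem:bc}, and \cref{lem:mrelax-bound} into the running-time formula inherited from \cref{thm:running-time-stable-lazy-fast}, so the proof is short.
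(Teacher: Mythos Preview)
Your proposal is correct and follows exactly the route the paper takes: the paper does not give a separate proof for this theorem at all, but simply remarks that it ``leads from'' \cref{lem:mrelax-bound}, meaning the generic bound is inherited from \cref{thm:running-time-stable-lazy-fast} and the three cases are obtained by substituting the extremal values of $\moffdiag$, $\mdiag$, and $c_j$ from~\eqref{eq:mlazy-bound}, \cref{lem:mrelax-bound}, and \cref{lem:bc}.

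One small comment: your final paragraph about absorbing $\nnz(K)$ into $\Theta(k^3)$ via the assumption $k = \Omega\prn[\big]{\nnz(K)^{1/3}}$ is unnecessary for the theorem as stated, since all three bullets retain the explicit $+\,\nnz(K)$ term; that absorption is only needed for the simplified entries in \cref{tbl:running-time}, not here.
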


Even if the lazy heuristics works poorly, the relaxing heuristics can improve the dominating term in the complexity from $\Theta(k^3 n)$ up to $\Theta(k^2n)$.

\section{Numerical Experiments}\label{sec:experiments}

In this section, we show experimental results on the running time and numerical stability of the five algorithms: \textsc{Greedy}, \textsc{FastGreedy}, \textsc{LazyFastGreedy}, \textsc{StableLazyFastGreedy}, and \textsc{RelaxedStableLazyFastGreedy} (abbreviated as \textsc{RSLFG}).
All the algorithms were implemented using C++11 with Eigen 3.4.0\footnote{\url{https://eigen.tuxfamily.org/} (accessed Aug 21, 2024)} for matrix computations and compiled by GCC 10.2.0.
Experiments were conducted on a computer with Intel\textregistered\ Xeon\textregistered\ Gold 5222 CPU (\SI{3.8}{GHz}, 4~Cores) and \SI{755}{GB} RAM.
Only double-precision arithmetic has been employed.

\paragraph{Data.}

\begin{table}[tb]
  \caption{Summary of dataset}\label{tbl:dataset}
  \centering
  \begin{tabular}{lrrrrr}\toprule
    \multicolumn{1}{c}{Data} & \multicolumn{1}{c}{$n$} & \multicolumn{1}{c}{$\nnz(K)$} & \multicolumn{1}{c}{$\displaystyle \max_{i \ne j:\: K_{ij} \ne 0} \abs{K_{ij}}$} & \multicolumn{1}{c}{$\displaystyle \min_{i \ne j:\: K_{ij} \ne 0} \abs{K_{ij}}$} \\\midrule
    DFG2 & 1,765 & 7,980 & \num{3.8e+14} & \num{9.4e-229}\\
    WL2  & 1,776 & 8,742 & \num{1.2e+20} & \num{1.7e-149} \\
    ALD2 & 6,199 & 20,848 & \num{3.1e+15} & \num{2.9e-182} \\
    Strecker  & 9,203  & 27,092 & \num{4.2e+17} & \num{1.8e-134}\\
    HDF2      & 10,810 & 29,802 & \num{9.7e+27} & \num{3.1e-210}\\
    Passerini & 12,215 & 30,746 & \num{1.2e+126}& \num{1.2e-144}\\
    \bottomrule
  \end{tabular}
\end{table}

We employ six publicly available chemical reaction path networks from Searching Chemical Action and Network (SCAN) platform~\citep{SCAN}: DFG2 (fluoroglycine synthesis), WL2 (Wohler’s urea synthesis), ALD2 (base-catalyzed aldol reaction), Strecker (strecker reaction), HDF2 (cobalt-catalyzed hydroformylation), and Passerini (Passerini reaction).
Detailed steps for converting a chemical reaction path network into a rate constant matrix can be found, for instance, in~\citep{Sumiya2017-qo}.
Characteristic values of the rate constant matrices are summarized in \cref{tbl:dataset}.

\paragraph{Results.}

\begin{figure}[!p]
  \centering
  \includegraphics[width=\linewidth]{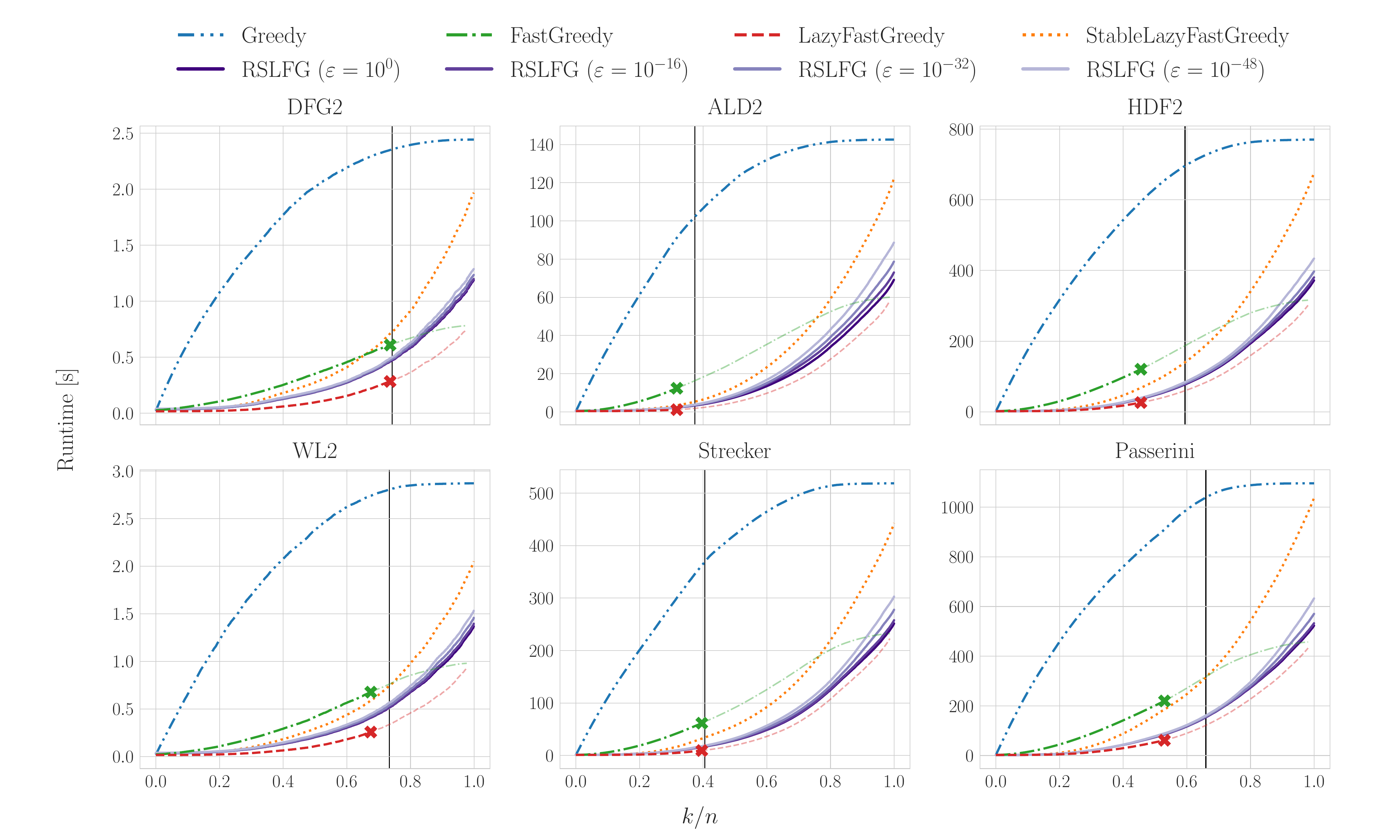}
  \caption{%
    The iteration counts $k$ over the matrices size $n$ versus the running times of \cref{step:1}.
    The cross marks with the horizontal coordinate $k$ indicate the smallest $k$ where the algorithm selected an incorrect element as $s^{(k)}$ due to catastrophic cancellation.
    The vertical black lines indicate the value of $k$ corresponding to $\tmax = \SI{86400}{sec}$.
  }\label{fig:runtime}
  \vspace*{\floatsep}
  \includegraphics[width=\linewidth]{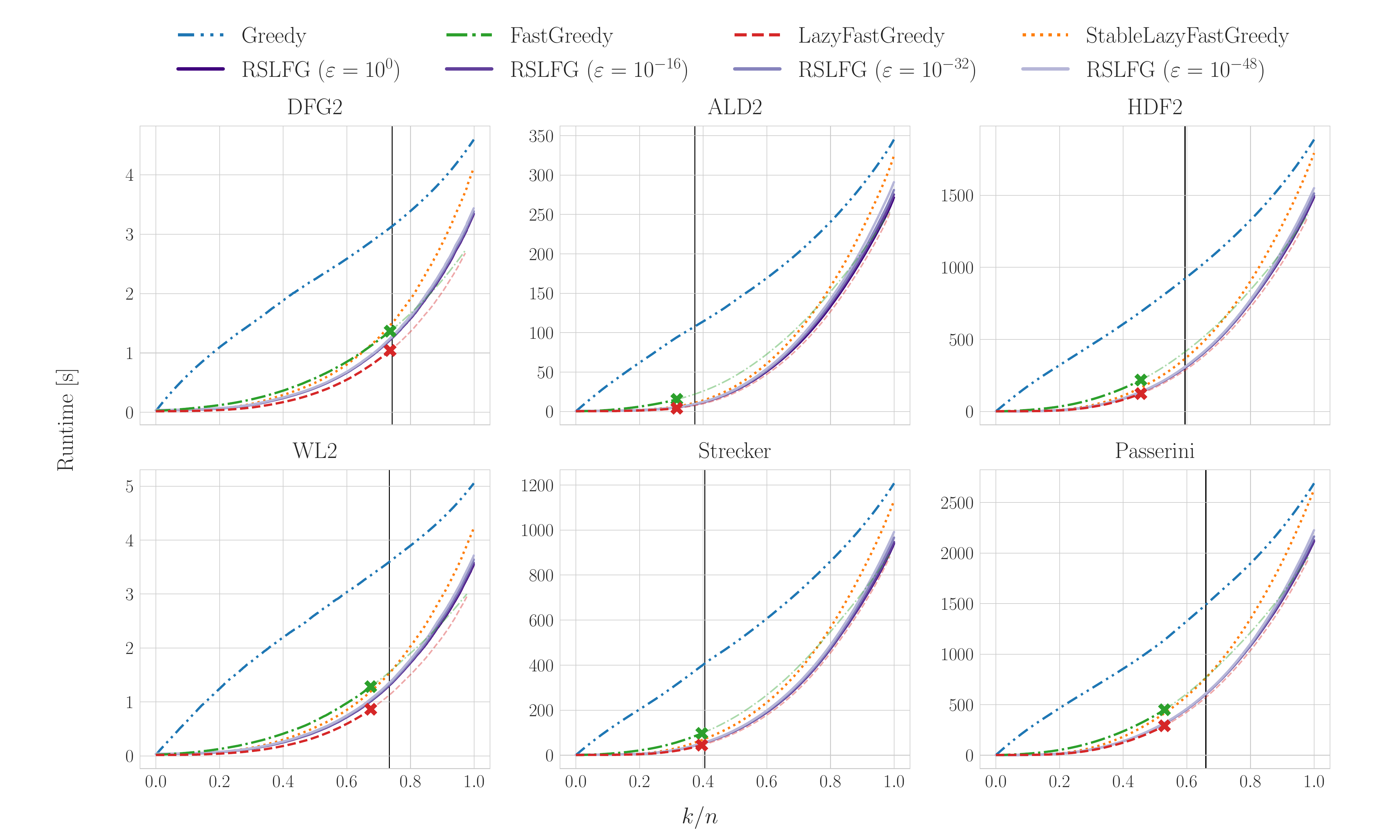}
  \caption{%
    The iteration counts $k$ over the matrices size $n$ versus the total running times of the methods (\cref{step:1,step:2}).
    The cross marks and vertical lines indicate the same as \cref{fig:runtime}.
  }\label{fig:runtime-pop}
\end{figure}

\begin{figure}[!p]
    \centering
    \includegraphics[width=\linewidth]{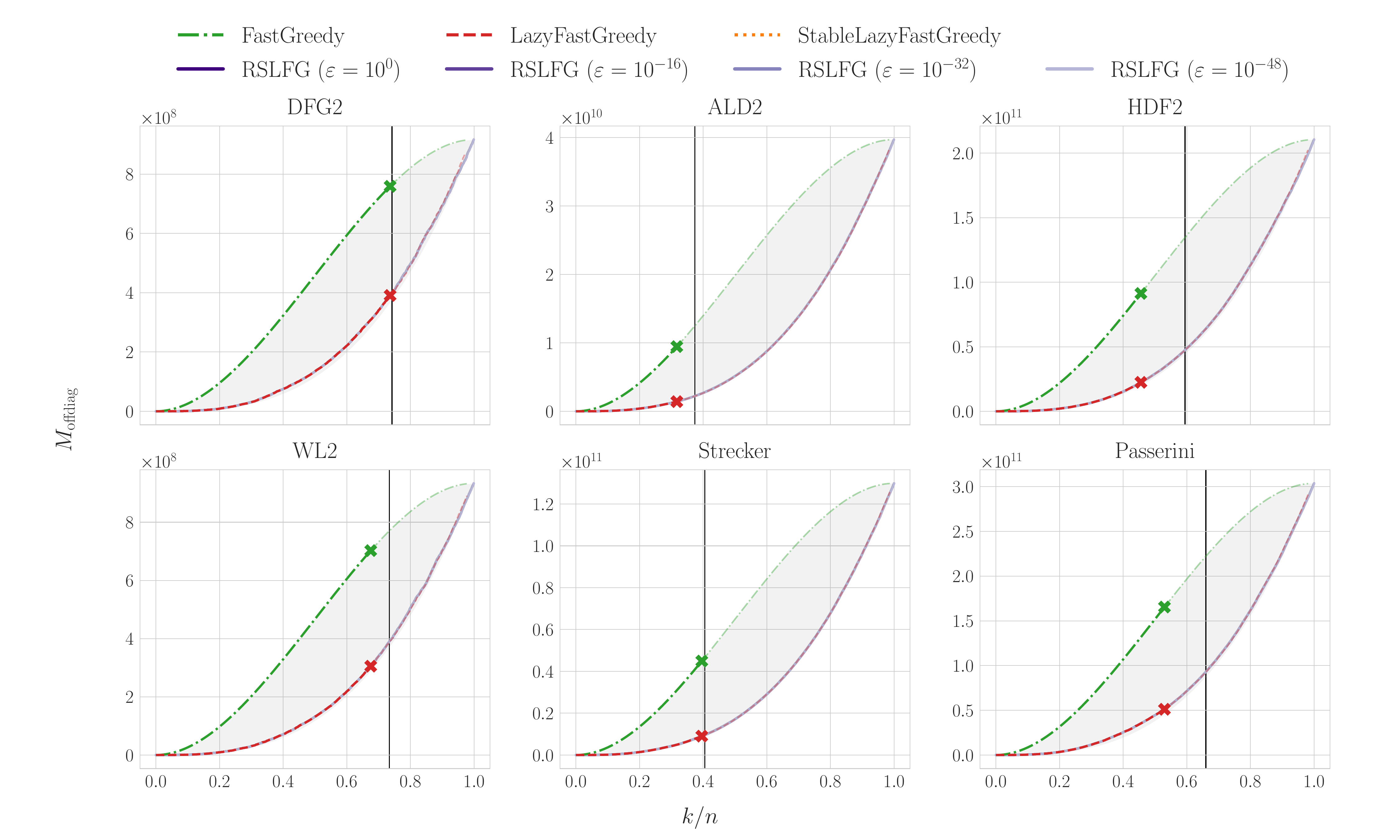}
    \label{subfig:num_computed_offdiagonals_V}
  \caption{
    The iteration counts $k$ over the matrices size $n$ versus $\moffdiag$ (defined in \cref{sec:lazy-greedy-runtime}). The gray areas indicate the ranges of possible values of $\moffdiag$ for every $k$ given by~\eqref{eq:mlazy-bound}.
    The cross marks and vertical lines indicate the same as \cref{fig:runtime}.
  }\label{fig:off-diagonals}
  \vspace*{\floatsep}
  \includegraphics[width=\linewidth]{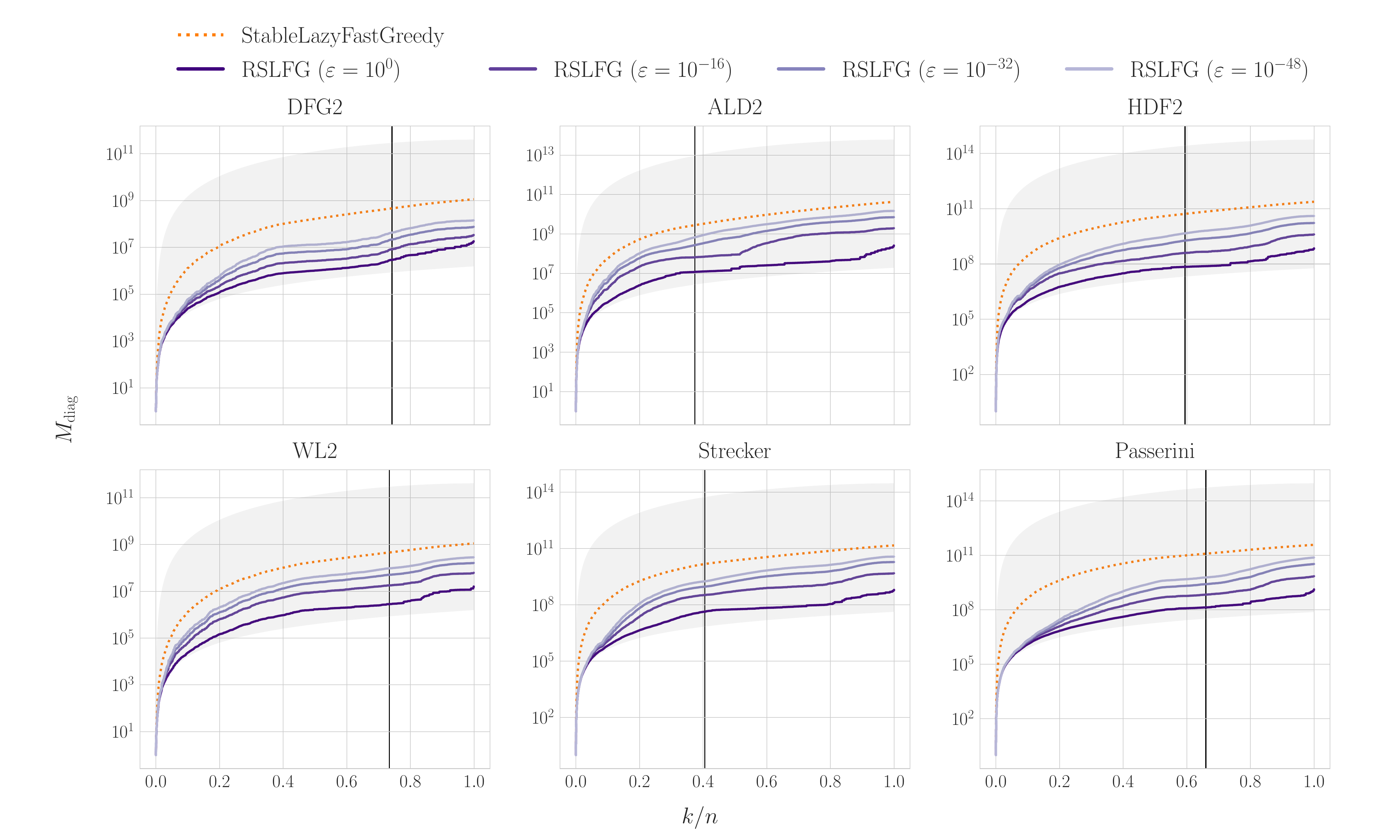}
  \caption{
    The iteration counts $k$ over the matrices size $n$ versus $\mdiag$ (defined in \cref{sec:stable-lazy-runtime}). The gray areas indicate the ranges of possible values of $\moffdiag$ for every $k$ given by \cref{lem:mrelax-bound}.
    The cross marks and vertical lines indicate the same as \cref{fig:runtime}.
  }\label{fig:virtual-off-diagonals}
\end{figure}

\begin{table}[tb]
  \caption{%
    Running times (seconds) at $k$ corresponding to $\tmax = \SI{86400}{sec}$.
    (\textsc{R})\textsc{SLFG} stands for (\textsc{Relaxed})\textsc{StableLazyFastGreedy}.
    The error tolerance $\varepsilon$ in \textsc{RSLFG} is set to $10^{-16}$.
  }\label{tbl:runtime}
  \centering
  \begin{tabular}{lrrrrr}\toprule
    \multicolumn{1}{c}{\multirow{2}[3]{*}{Data}} & \multicolumn{1}{c}{\multirow{2}[3]{*}{$k$}} & \multicolumn{3}{c}{\cref{step:1}} & \multicolumn{1}{c}{\multirow{2}[3]{*}{\cref{step:2}}} \\\cmidrule(lr){3-5}
    & & \multicolumn{1}{c}{\textsc{Greedy}} & \multicolumn{1}{c}{\textsc{SLFG}} & \multicolumn{1}{c}{\textsc{RSLFG}} &  \\\midrule
    DFG2      & \num{1311} & 2.35     & 0.73   & 0.48   & 0.78   \\
    WL2       & \num{1304} & 2.81     & 0.75   & 0.52   & 0.79   \\
    ALD2      & \num{2318} & 102.18  & 5.33   & 3.53   & 5.79    \\
    Strecker  & \num{3729} & 369.14  & 34.16  & 17.33  & 38.39  \\
    HDF2      & \num{6427} & 696.10  & 140.51 & 79.54  & 226.71 \\
    Passerini & \num{8057} & 1038.79 & 314.30 & 154.81 & 450.37 \\
    \bottomrule
  \end{tabular}
\end{table}

\Cref{fig:runtime} shows the running times of the greedy parts (\cref{step:1}) of the algorithms, 
where \textsc{RelaxedStableLazyFastGreedy} was executed with four different error tolerance parameters $\varepsilon \in \set{10^{0}, 10^{-16}, 10^{-32}, 10^{-48}}$.
While \textsc{LazyFastGreedy} ran fastest on all data, it selected incorrect $s^{(j)}$ at some $j$ due to catastrophic cancellations.
\textsc{RelaxedStableLazyFastGreedy} with $\varepsilon = 10^0$ was the fastest among the algorithms that produced the correct solution, whereas that with other values of $\varepsilon$ ran in comparable times.
This means that the choice of $\varepsilon$ does not affect both the numerical stability and the running time significantly.
Considering the fact that even additions of like-sign numbers may increase the error by the factor of the machine epsilon, which is roughly $10^{-16}$ for double-precision numbers, we recommend employing $\varepsilon = 10^{-16}$. 

\Cref{fig:runtime-pop} shows the running times of the methods including both \cref{step:1,step:2} with opt = \texttt{full}.
To measure the running time, we recorded the time $\tau_j$ taken to compute $q^{(j)}$ by \eqref{eq:t-q} for each $j$ and regarded $\sum_{j=1}^k \tau_j$ as the running time of \cref{step:2} until the $k$th iteration.
We observe that \cref{step:1} is actually a bottleneck of the original RCMC method, especially for small $k$, and our fast greedy methods can accelerate the entire procedure even with the Full option.
Note that the running time with opt = \texttt{last} is almost the same as drawn in \cref{fig:runtime} as $\tau_k$ is negligibly small compared to the running time of \cref{step:1}.

\Cref{tbl:runtime} describes the running times of \cref{step:1} (\textsc{Greedy}, \textsc{StableLazyFastGreedy}, and \textsc{RelaxedStableLazyFastGreedy}) and \cref{step:2} at $k$ corresponding to $\tmax = \SI{86400}{sec}$.
We can observe for any data that the running time of \textsc{Greedy} is greater than that of \cref{step:2}, whereas \textsc{RelaxedStableLazyFastGreedy} runs faster than \cref{step:2}.
This means that our proposed method sufficiently speeds up \cref{step:1}, and the bottleneck of the RCMC method is now moved to \cref{step:2}.

To take a closer look at how well the heuristics work, we illustrate in \cref{fig:off-diagonals,fig:virtual-off-diagonals} the values of $\moffdiag$ and $\mdiag$, respectively.
\Cref{fig:off-diagonals} indicate that $\moffdiag$ attains almost the lower bounds in~\eqref{eq:mlazy-bound} at any $k$ for all data, meaning that the lazy heuristics work quite well in practice.
Similarly, the curves of \textsc{RelaxedStableLazyFastGreedy} lie lower than that of \textsc{StableLazyFastGreedy} in \cref{fig:virtual-off-diagonals}, hence the relaxing heuristics is also effective.

\section{Conclusion}\label{sec:conclusion}
In this paper, we have proposed a fast and numerically stable implementation of the greedy part (\cref{step:1}) of the RCMC method for chemical kinetics simulation.
Our technique is to modify the lazy greedy for the DPP MAP inference to avoid subtractions of like-sign numbers, leveraging properties of rate constant matrices.
For faster computation, we employ segment trees and partially allow like-sign subtractions if they do not cause catastrophic cancellations.
Using real instances from chemical reactions, we have confirmed that the presented algorithm runs numerically stably and much faster than the original method.
Accelerating \cref{step:2}, a new bottleneck, is left for future investigation.

\section*{Acknowledgments}
The authors thank Satoshi Maeda and Yu Harabuchi for their helpful comments and Yutaro Yamaguchi for providing information on the reference~\citep{Chazelle1988-zw} of segment trees.
This work was supported by JST ERATO Grant Number JPMJER1903 and JSPS KAKENHI Grant Number JP22K17853.

\printbibliography[heading=bibintoc]

@BOOK{Higham2002-zq,
  title     = "Accuracy and Stability of Numerical Algorithms",
  author    = "Higham, Nicholas J",
  publisher = "SIAM",
  month     =  jan,
  year      =  2002,
  url       = "https://doi.org/10.1137/1.9780898718027",
  isbn      = "9780898715217",
  doi       = "10.1137/1.9780898718027"
}

@INPROCEEDINGS{Hemmi2022-zj,
  title     = "Lazy and fast greedy MAP inference for determinantal point
               process",
  booktitle = "Advances in Neural Information Processing Systems 35 (NeurIPS
               '22)",
  author    = "Hemmi, Shinichi and Oki, Taihei and Sakaue, Shinsaku and Fujii,
               Kaito and Iwata, Satoru",
  pages     = "2276--2789",
  year      =  2022
}

@INPROCEEDINGS{Chen2018-nk,
  title     = "Fast greedy MAP inference for determinantal point process to
               improve recommendation diversity",
  booktitle = "Advances in Neural Information Processing Systems 31 (NeurIPS
               '18)",
  author    = "Chen, Laming and Zhang, Guoxin and Zhou, Eric",
  pages     = "5627--5638",
  year      =  2018,
  url       = "https://dl.acm.org/doi/10.5555/3327345.3327465",
  issn      = "1049-5258",
  doi       = "10.5555/3327345.3327465"
}

@ARTICLE{Sumiya2015-br,
  title   = "Kinetic analysis for the multistep profiles of organic reactions:
             significance of the conformational entropy on the rate constants
             of the Claisen rearrangement",
  author  = "Sumiya, Yosuke and Nagahata, Yutaka and Komatsuzaki, Tamiki and
             Taketsugu, Tetsuya and Maeda, Satoshi",
  journal = "The Journal of Physical Chemistry A",
  volume  =  119,
  number  =  48,
  pages   = "11641--11649",
  year    =  2015,
  url     = "http://dx.doi.org/10.1021/acs.jpca.5b09447",
  issn    = "1089-5639, 1520-5215",
  pmid    = "26567633",
  doi     = "10.1021/acs.jpca.5b09447"
}

@ARTICLE{Sumiya2020-ti,
  title   = "Rate constant matrix contraction method for systematic analysis of
             reaction path networks",
  author  = "Sumiya, Yosuke and Maeda, Satoshi",
  journal = "Chemistry Letters",
  volume  =  49,
  number  =  5,
  pages   = "553--564",
  month   =  may,
  year    =  2020,
  url     = "http://www.journal.csj.jp/doi/10.1246/cl.200092",
  issn    = "0366-7022",
  doi     = "10.1246/cl.200092"
}

@misc{Iwata2023,
  title  = {{Approximating Solutions of Stiff Master Equations with Detailed Balance}},
  author = {Iwata, Satoru and Oki, Taihei and Sakaue, Shinsaku},
  year   = {2023},
  note   = {arXiv: \href{https://arxiv.org/abs/2312.05470}{\texttt{2312.05470}}}
}

@Article{SCAN,
author ="Kuwahara, Mikael and Harabuchi, Yu and Maeda, Satoshi and Fujima, Jun and Takahashi, Keisuke",
title  ={{Searching chemical action and network (SCAN): an interactive chemical reaction path network platform}},
journal  ="Digital Discovery",
year  ="2023",
volume  ="2",
issue  ="4",
pages  ="1104-1111",
publisher  ="RSC",
doi  ="10.1039/D3DD00026E",
url  ="http://dx.doi.org/10.1039/D3DD00026E"}

@article{nemhauser1978analysis,
  title     = {An analysis of approximations for maximizing submodular set functions-I},
  author    = {Nemhauser, G. L. and Wolsey, L. A. and Fisher, M. L.},
  journal   = {Mathematical programming},
  volume    = {14(1)},
  pages     = {265--294},
  year      = {1978},
  publisher = {Springer}
}

@article{nemhauser1978best,
  title     = {Best algorithms for approximating the maximum of a submodular set function},
  author    = {Nemhauser, G. L. and Wolsey, L. A.},
  journal   = {Mathematics of operations research},
  volume    = {3(3)},
  pages     = {177--188},
  year      = {1978},
  publisher = {INFORMS}
}

@article{fan1968inequality,
  title={An inequality for subadditive functions on a distributive lattice, with application to determinantal inequalities},
  author={Fan, Ky},
  journal={Linear Algebra and Its Applications},
  volume={1},
  number={1},
  pages={33--38},
  year={1968},
  publisher={North-Holland}
}

@article{Ohno2004-iw,
  title   = {A scaled hypersphere search method for the topography of reaction
             pathways on the potential energy surface},
  author  = {Ohno, Koichi and Maeda, Satoshi},
  journal = {Chemical Physics Letters},
  volume  = 384,
  number  = 4,
  pages   = {277--282},
  year    = 2004,
  doi     = {10.1016/j.cplett.2003.12.030}
}

@article{Maeda2014-cm,
  title   = {{Exploring transition state structures for intramolecular pathways
             by the artificial force induced reaction method}},
  author  = {Maeda, Satoshi and Taketsugu, Tetsuya and Morokuma, Keiji},
  journal = {Journal of Computational Chemistry},
  volume  = 35,
  number  = 2,
  pages   = {166--173},
  year    = 2014,
  issn    = {0192-8651, 1096-987X},
  pmid    = {24186858},
  doi     = {10.1002/jcc.23481}
}

@article{Macchi_1975,
    title={The coincidence approach to stochastic point processes},
    volume={7},
    DOI={10.2307/1425855},
    number={1},
    journal={Advances in Applied Probability},
    author={Macchi, Odile},
    year={1975},
    pages={83–122}
}

@InProceedings{Minoux_1978,
author="Minoux, Michel",
editor="Stoer, J.",
title="Accelerated greedy algorithms for maximizing submodular set functions",
booktitle="Optimization Techniques",
year="1978",
publisher="Springer Berlin Heidelberg",
address="Berlin, Heidelberg",
pages="234--243",
abstract="Given a finite set E and a real valued function f on P(E) (the power set of E) the optimal subset problem (P) is to find S ⊂ E maximizing f over P(E). Many combinatorial optimization problems can be formulated in these terms. Here, a family of approximate solution methods is studied : the greedy algorithms.",
isbn="978-3-540-35890-9"
}

@article{Ko_1995,
 ISSN = {0030364X, 15265463},
 URL = {http://www.jstor.org/stable/171694},
 abstract = {We study the experimental design problem of selecting a most informative subset, having prespecified size, from a set of correlated random variables. The problem arises in many applied domains, such as meteorology, environmental statistics, and statistical geology. In these applications, observations can be collected at different locations, and possibly, at different times. Information is measured by "entropy." In the Gaussian case, the problem is recast as that of maximizing the determinant of the covariance matrix of the chosen subset. We demonstrate that this problem is NP-hard. We establish an upper bound for the entropy, based on the eigenvalue interlacing property, and we incorporate this bound in a branch-and-bound algorithm for the exact solution of the problem. We present computational results for estimated covariance matrices that correspond to sets of environmental monitoring stations in the United States.},
 author = {Chun-Wa Ko and Jon Lee and Maurice Queyranne},
 journal = {Operations Research},
 number = {4},
 pages = {684--691},
 publisher = {INFORMS},
 title = {An Exact Algorithm for Maximum Entropy Sampling},
 urldate = {2024-02-26},
 volume = {43},
 year = {1995}
}

@article{GRCAR2011163,
title = {{How ordinary elimination became Gaussian elimination}},
journal = {Historia Mathematica},
volume = {38},
number = {2},
pages = {163-218},
year = {2011},
issn = {0315-0860},
doi = {https://doi.org/10.1016/j.hm.2010.06.003},
url = {https://www.sciencedirect.com/science/article/pii/S0315086010000376},
author = {Joseph F. Grcar},
keywords = {Algebra before 1800, Gaussian elimination, Human computers, Least squares method, Mathematics education},
abstract = {Newton, in notes that he would rather not have seen published, described a process for solving simultaneous equations that later authors applied specifically to linear equations. This method — which Euler did not recommend, which Legendre called “ordinary,” and which Gauss called “common” — is now named after Gauss: “Gaussian” elimination. Gauss’s name became associated with elimination through the adoption, by professional computers, of a specialized notation that Gauss devised for his own least-squares calculations. The notation allowed elimination to be viewed as a sequence of arithmetic operations that were repeatedly optimized for hand computing and eventually were described by matrices.
Zusammenfassung
In Aufzeichnungen, die Newton lieber nicht der Veröffentlichung preisgegeben hätte, beschreibt er den Prozess für die Lösung von simultanen Gleichungen, den spätere Autoren speziell für lineare Gleichungen anwandten. Diese Methode — welche Euler nicht empfahl, welche Legendre “ordinaire” nannte, und welche Gauß “gewöhnlich” nannte — wird nun nach Gauß benannt: Gaußsches Eliminationsverfahren. Die Verbindung des Gaußschen Namens mit Elimination wurde dadurch hervorgebracht, dass professionelle Rechner eine Notation übernahmen, die Gauß speziell für seine eigenen Berechnungen der kleinsten Quadrate ersonnen hatte, welche zuließ, das Elimination als eine Sequenz von arithmetischen Rechenoperationen betrachtet wurde, die wiederholt für Handrechnungen optimisiert wurden und schließlich durch Matrizen beschrieben wurden.}
}

@article{gauss1809theoria,
  title={Theoria motus corporum coelestum},
  author={Gauss, Carl Friedrich},
  journal={Werke},
  year={1809}
}

@article{doolittle1878method,
  title={Method employed in the solution of normal equations and the adjustment of a triangulation},
  author={Doolittle, Myrick H},
  journal={US Coast and Geodetic Survey Report},
  volume={1878},
  pages={115--120},
  year={1878},
  publisher={Government Printing Office Washington, DC}
}

@ARTICLE{Chazelle1988-zw,
  title     = "A functional approach to data structures and its use in
               multidimensional searching",
  author    = "Chazelle, Bernard",
  journal   = "SIAM Journal on Computing",
  publisher = "SIAM",
  volume    =  17,
  number    =  3,
  pages     = "427--462",
  month     =  jun,
  year      =  1988,
  url       = "https://doi.org/10.1137/0217026",
  issn      = "0097-5397",
  doi       = "10.1137/0217026"
}

@book{Anderson1991-ec,
  title     = {{Continuous-Time Markov Chains}},
  author    = {Anderson, William J},
  publisher = {Springer},
  series    = {Springer Series in Statistics},
  year      = 1991,
  address   = {New York},
  doi       = {10.1007/978-1-4612-3038-0}
}

@article{Sumiya2017-qo,
  title   = {{Full rate constant matrix contraction method for obtaining
             branching ratio of unimolecular decomposition}},
  author  = {Sumiya, Yosuke and Taketsugu, Tetsuya and Maeda, Satoshi},
  journal = {Journal of Computational Chemistry},
  volume  = 38,
  number  = 2,
  pages   = {101--109},
  year    = 2017,
  issn    = {0192-8651, 1096-987X},
  pmid    = {27796079},
  doi     = {10.1002/jcc.24526}
}

@ARTICLE{Kulesza2012-er,
  title   = "Determinantal Point Processes for Machine Learning",
  author  = "Kulesza, Alex and Taskar, Ben",
  journal = "Foundations and Trends\textregistered{} in Machine Learning",
  volume  =  5,
  number  = "2--3",
  pages   = "123--286",
  year    =  2012
}

@ARTICLE{Nakamura2022-nd,
  title   = "Selecting molecules with diverse structures and properties by
             maximizing submodular functions of descriptors learned with graph
             neural networks",
  author  = "Nakamura, Tomohiro and Sakaue, Shinsaku and Fujii, Kaito and
             Harabuchi, Yu and Maeda, Satoshi and Iwata, Satoru",
  journal = "Scientific Reports",
  volume  =  12,
  number  =  1,
  pages   = "1124",
  year    =  2022
}

@ARTICLE{Harabuchi2024-mh,
  title    = "Differentiating the yield of chemical reactions using parameters
              in first-order kinetic equations to identify elementary steps that
              control the reactivity from complicated reaction path networks",
  author   = "Harabuchi, Yu and Yokoyama, Tomohiko and Matsuoka, Wataru and Oki,
              Taihei and Iwata, Satoru and Maeda, Satoshi",
  journal  = "The Journal of Physical Chemistry. A",
  volume   =  128,
  number   =  14,
  pages    = "2883--2890",
  month    =  apr,
  year     =  2024,
  url      = "https://pubs.acs.org/doi/full/10.1021/acs.jpca.4c00204",
  doi      = "10.1021/acs.jpca.4c00204",
  pmid     =  38564273,
  issn     = "1089-5639,1520-5215",
}

\end{document}